\newtheorem{thm}{Theorem}[section]
\newtheorem*{theorem*}{Theorem}
\newtheorem*{corollary*}{Corollary}
\newtheorem{prop}[thm]{Proposition}
\newtheorem{lem}[thm]{Lemma}
\newtheorem{rem}[thm]{Remark}
\newtheorem{defn}[thm]{Definition}
\newtheorem{cor}[thm]{Corollary}\newtheorem{example}[thm]{Example}
\newcommand{\nc}{\newcommand}
\nc{\Un}{{\mathbb U}({\mathfrak n}^-)}
\nc{\UnZ}{{\mathbb U}({\mathfrak n}^-_{\mathbb{Z}})}
\nc{\UnA}{{\mathbb U}({\mathfrak n}^-_{A})}
\newcommand{\ZZ}{\mathbb{Z}}
\title[Degenerate flag varieties and Schubert varieties]{Degenerate flag varieties and Schubert varieties: \\a characteristic free approach} 
\author{G. Cerulli Irelli, M. Lanini, P. Littelmann}
\address{Giovanni Cerulli Irelli:
Dipartimento di Matematica. Sapienza-Universit\`a di Roma. Piazzale Aldo Moro 5, I-00185, Rome (Italy)}
\email{cerulli@\,mat.uniroma1.it, cerulli.math@googlemail.com}
\address{Martina Lanini: School of Mathematics, University of Edinburgh,
James Clerk Maxwell Building,
Edinburgh, EH9 3FD (UK)}
\email{m.lanini@\,ed.ac.uk}
\address{Peter Littelmann:
Mathematisches Institut
Universit\"at zu K\"oln
Weyertal 86-90
D-50931 K\"oln (Germany)}
\email{peter.littelmann@\,math.uni-koeln.de}
\begin{document}

\begin{abstract}
 We consider the PBW filtrations over $\mathbb Z$ of the irreducible highest weight modules in type ${\tt A}_n$ and ${\tt C}_n$.
 We show that the associated graded modules can be realized as Demazure modules for group schemes of the same
 type and doubled rank. We deduce that the corresponding degenerate flag varieties are isomorphic to Schubert varieties
 in any characteristic.
\end{abstract}
\maketitle

\section*{Introduction}
Introduced by Evgeny Feigin in 2010, degenerate flag varieties naturally arise from a representation theoretic context. In fact, given  a finite dimensional, highest weight irreducible module $V(\lambda)$ for a simple finite dimensional, complex Lie algebra, the corresponding degenerate flag variety ${\mathcal{F}\ell}(\lambda)^a$ is the closure of a certain highest weight orbit in the projectivization of  $V(\lambda)^a$, a degenerate version of $V(\lambda)$. 

If the algebra one starts with is of type ${\tt A}_n$ or ${\tt C}_n$, it was shown in \cite{CL} that, surprisingly, degenerate flag varieties can be realized as Schubert varieties in a partial flag variety of same type and bigger rank. It is hence natural to ask whether also the modules $V(\lambda)^a$ are isomorphic to some already investigated objects. The aim of this paper is to address such a question and provide a positive answer to it. Feigin's degeneration procedure can be carried out over $\mathbb{Z}$ (cf. \cite{FFL3}) and it is in this generality that we decided to approach the problem.

Our main theorem  is the realization of $V(\lambda)^a$ as a Demazure module for a group scheme of same type and doubled rank.  This fact allows us to recover, as a corollary, the above-mentioned realization of ${\mathcal{F}\ell}(\lambda)^a$ as a Schubert variety.  While the arguments in \cite{CL} relied on a linear algebraic description of the degenerate flag variety due to Feigin (cf. \cite{F1}), the proof we obtain here  only uses the definition of  ${\mathcal{F}\ell}(\lambda)^a$  as a closure of an highest weight orbit and it is hence more conceptual.
% and, hopefully, generalizable to other types.

In what follows,  we describe more in detail the main results of this article.

For simplicity let us start with the complex algebraic group $SL_n(\mathbb C)$ and its Lie algebra
$\mathfrak{g}=\mathfrak{sl}_n$. We fix a Cartan decomposition $\mathfrak{g}=\mathfrak{n}^-\oplus \mathfrak{h}\oplus \mathfrak{n}^+$, where
$\mathfrak{n}$ is the subalgebra of strictly upper triangular matrices, $\mathfrak{h}$ is the Cartan subalgebra consisting of diagonal matrices
and $\mathfrak{n}^-$  is the subalgebra of strictly lower triangular matrices. Let $\mathfrak{b}=\mathfrak{h}\oplus \mathfrak{n}^+$ be the
corresponding Borel subalgebra of $\mathfrak{g}$ and let $B\subset G$ be the Borel subgroup with Lie algebra $\mathfrak b$.

We use the notation $\tilde B$, $\tilde{\mathfrak{b}}$, $\tilde{\mathfrak{n}}^+$, $\tilde{\mathfrak{h}}$ and $\tilde{\mathfrak{n}}^-$
for the corresponding subgroup of $\tilde G=SL_{2n}(\mathbb C)$ and subalgebras of $\tilde{\mathfrak{g}}=\mathfrak{sl}_{2n}$.
Let $\mathfrak{n}^{-,a}\subset \mathfrak{sl}_{2n}$ respectively $N^{-,a} \subseteq SL_{2n}(\mathbb C)$
be the following commutative Lie subalgebra respectively commutative unipotent subgroup:
\begin{equation}\label{nilpotentshift}
\mathfrak{n}^{-,a}:=\left\{\left(\begin{array}{cc} 0  & N \\ 0 & 0\hskip-2pt
\end{array}\right)\in \mathfrak{sl}_{2n}\mid N\in \mathfrak{n}^-\right\},
N^{-,a}:=\left\{\left(\begin{array}{cc} 1\hskip-2pt \text{I} & N \\ 0 & 1\hskip-2pt
\text{I}\end{array}\right)\in SL_{2n} \mid N\in \mathfrak{n}^-\right\}.
\end{equation}
We view $\mathfrak{n}^{-,a}$ as the abelianization of $\mathfrak{n}^-$,
i.e. we have the canonical vector space isomorphism between the two vector spaces,  but $\mathfrak{n}^{-,a}$ is endowed with the trivial Lie bracket.
The enveloping algebra of $\mathfrak{n}^{-,a}$ is $S^\bullet(\mathfrak{n}^{-,a})$. The embedding
$\mathfrak{n}^{-,a}\hookrightarrow \tilde{\mathfrak{b}}$ induces an embedding
$S^\bullet(\mathfrak{n}^{-,a})\hookrightarrow{U}(\tilde{\mathfrak{b}})$, so
any ${U}(\tilde{\mathfrak{b}})$-module inherits in a natural way the structure of a
$S^\bullet(\mathfrak{n}^{-,a})$-module.

A well investigated class of ${U}(\tilde{\mathfrak{b}})$-modules are the Demazure modules: let $\mu$ be a dominant
integral weight for $\tilde{\mathfrak{g}}$ and  let $\tilde{V}(\mu)$ be the corresponding irreducible representation.
For an element $w$ of the Weyl group $\tilde{W}$ of $\tilde{\mathfrak{g}}$, the weight space $\tilde{V}(\mu)_{w\mu}$ of
weight $w\mu$ is one-dimensional, fix a generator $v_{w\mu}$. Recall that the Demazure submodule $\tilde{V}(\mu)_{w}$ is
by definition the cyclic ${U}(\tilde{\mathfrak{b}})$--module generated by $v_{w\mu}$, i.e.
$\tilde{V}(\mu)_{w}={U}(\tilde{\mathfrak{b}})\cdot v_{w\mu}$, and the Schubert variety $X(w)$ is the closure
of the orbit $\tilde B.[v_{w\mu}]\subseteq \mathbb P(\tilde V(\mu))$.

A special class of $S^\bullet(\mathfrak{n}^{-,a})$-modules has been investigated in \cite{FFL1, FFL2}. Let $\lambda$ be a dominant
integral weight for ${\mathfrak{g}}$, let ${V}(\lambda)$ be the corresponding irreducible representation and
fix a highest weight vector $v_\lambda$. The PBW filtration on ${U}({\mathfrak{n}^-})$ induces
a filtration on the cyclic ${U}({\mathfrak{n}^-})$-module $V(\lambda)={U}({\mathfrak{n}^-}).v_\lambda$,
the associated graded space $V^a(\lambda):=\text{gr\,}V(\lambda)$ becomes a module for the associated graded
algebra $S^\bullet(\mathfrak{n}^-):=\text{gr\,}{U}({\mathfrak{n}^-})\simeq S^\bullet(\mathfrak{n}^{-,a})$.

The action of $\mathfrak{n}^{-,a}$ on $V^a(\lambda)$ can be integrated to an action of $N^{-,a}$, in analogy with the classical case
we call the closure of the orbit $\mathcal{F}^a_\lambda:=\overline{N^{-,a}.[v_\lambda]}\subseteq \mathbb P(V^a(\lambda))$ the degenerate flag variety.

The aim of this article is to connect these two constructions and extend the results in \cite{CL} to an algebraically closed field $k$ of arbitrary characteristic.
In fact, the results hold even over $\mathbb Z$. For simplicity, we formulate them in the introduction for an algebraically closed field $k$.
In the following we consider the case $G=SL_n(k)$ and $\tilde G=SL_{2n}(k)$, respectively $G=Sp_{2m}(k),$ and $\tilde G=Sp_{4m}(k)$, and
we replace the irreducible module of highest weight $\lambda$ by the Weyl module of highest weight $\lambda$, using the same
notation $V(\lambda)$. For the precise description
of the highest weight $\Psi(\lambda)$ see Definitions~\ref{mappsi} and \ref{mappsisymp}, for a description of the Weyl group element $\tau\in \tilde W$ see 
Definitions~\ref{deftau} and \ref{tauSp},
and the construction of the Lie algebra $\mathfrak n^{-,a}$ in the symplectic case can be found in Section~\ref{symplecticCase}.
For a dominant $G$-weight $\lambda$ let $\lambda^*$ be the dual dominant weight, so for the symplectic case we
have $\lambda=\lambda^*$, and in the $SL_n$ case we have  $\lambda^*=\sum_{i=1}^{n-1} m_i\omega_{n-i}$ for $\lambda=\sum_{i=1}^{n-1} m_i\omega_{i}$
(notation as in \cite{B}).

\begin{theorem*}
Let $\lambda$ be a dominant $G$-weight.
\begin{itemize}
\item[{\it i)}]  The Demazure submodule
$\tilde V_{k}(\Psi(\lambda^*))_\tau$ of the
$\tilde G$-module $\tilde V_{k}(\Psi(\lambda^*))$ is,
as an $\mathfrak n^{-,a}$-module, isomorphic to the abelianized
module $V^a(\lambda)$.
\item[{\it ii)}] The Schubert variety
$X(\tau)\subset \mathbb P(\tilde V(\Psi(\lambda^*))_\tau)$ is isomorphic to the
degenerate  flag variety ${\mathcal{F}^a}(\lambda)$, and this isomorphism induces an
$S^\bullet(\mathfrak{n}^{-,a})$-module isomorphism
$$
H^0(X(\tau),\mathcal L_{\Psi(\lambda^*)})\simeq (V^a(\lambda))^*.
$$
\end{itemize}
\end{theorem*}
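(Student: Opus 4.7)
The plan is to establish part (i) by constructing an explicit $\mathfrak n^{-,a}$-equivariant isomorphism $V^a(\lambda) \xrightarrow{\sim} \tilde V_{k}(\Psi(\lambda^*))_\tau$ sending the highest weight class $v_\lambda$ to the extremal weight vector $v_{\tau \Psi(\lambda^*)}$. Since the Demazure module is cyclic over $U(\tilde{\mathfrak b})$ and since $\mathfrak n^{-,a}$ embeds into $\tilde{\mathfrak b}$, the first task is to show that $v_{\tau \Psi(\lambda^*)}$ already generates $\tilde V_{k}(\Psi(\lambda^*))_\tau$ under the abelian subalgebra $\mathfrak n^{-,a}$ alone. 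This relies on the specific choice of $\tau$: it must be tailored so that the remaining positive root vectors of $\tilde{\mathfrak n}^+ \setminus \mathfrak n^{-,a}$ and the torus part of $\tilde{\mathfrak b}$ preserve the $\mathfrak n^{-,a}$-submodule generated by $v_{\tau \Psi(\lambda^*)}$. Part (ii) will then follow from part (i) by integrating the module isomorphism to an isomorphism of orbit closures, combined with the standard Borel--Weil-type identification of sections on Schubert varieties.

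In detail, I would proceed in four steps. First, define a candidate map $V^a(\lambda) \to \tilde V_{k}(\Psi(\lambda^*))_\tau$ by $x \cdot v_\lambda \mapsto x \cdot v_{\tau \Psi(\lambda^*)}$ for $x \in S^\bullet(\mathfrak n^{-,a})$; to see this is well-defined, verify that the defining FFLV relations of $V^a(\lambda)$ (certain explicit powers of root vectors, integrally available by \cite{FFL1, FFL2, FFL3}) annihilate $v_{\tau \Psi(\lambda^*)}$ in the Demazure module. Second, deduce surjectivity by the cyclicity argument above. Third, compare dimensions in a characteristic-free manner: FFLV theory gives a polytope formula for $\dim_k V^a(\lambda)$ that is independent of characteristic, while Demazure modules in type $\mathtt A$ and $\mathtt C$ admit $\mathbb Z$-forms whose $k$-dimensions are computed by the Demazure character formula, again independent of characteristic; these two counts must match. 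Fourth, for (ii), once (i) is proved, observe that $\tilde T$ fixes the line $[v_{\tau \Psi(\lambda^*)}]$ while the only positive root subgroups of $\tilde B$ moving this line are those belonging to $N^{-,a}$ (by the same argument justifying cyclicity under $\mathfrak n^{-,a}$); hence $\tilde B \cdot [v_{\tau \Psi(\lambda^*)}] = N^{-,a} \cdot [v_{\tau \Psi(\lambda^*)}]$, and passing to closures yields $X(\tau) \cong {\mathcal F^a}(\lambda)$. The cohomological statement then follows by dualising part (i) via the characteristic-free identification $H^0(X(\tau), \mathcal L_{\Psi(\lambda^*)}) \cong (\tilde V(\Psi(\lambda^*))_\tau)^*$ available for Schubert varieties in arbitrary characteristic.

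The main obstacle will be the relation-checking step: verifying in a characteristic-free manner that the concrete FFLV relations defining $V^a(\lambda)$ hold on $v_{\tau \Psi(\lambda^*)}$ inside the Demazure module. This rests on combinatorial properties of the embedding $\mathfrak n^{-,a} \hookrightarrow \tilde{\mathfrak n}^+$ and on pinning down $\tau$ so as to make the weight geometry work, and it is where the asymmetry between $\lambda$ and $\lambda^*$ becomes essential. A related subtlety is the integral availability of all ingredients---FFLV bases over $\mathbb Z$, an integral model of the Demazure module with constant $k$-dimension, and a consistent $\mathbb Z$-model for the Schubert variety together with its line bundle---but once these technical points are secured, the structural part of the argument is essentially formal.
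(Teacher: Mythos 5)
Your overall target is right, but the strategy as described has a circularity issue in the characteristic-free setting, and it also misses the structural device the paper actually uses. You propose to define a map $V^a(\lambda)\to \tilde V_k(\Psi(\lambda^*))_\tau$ by $x\cdot v_\lambda \mapsto x\cdot v_{\tau\Psi(\lambda^*)}$, verify well-definedness by checking that the FFLV generators annihilate $v_{\tau\Psi(\lambda^*)}$, and then conclude injectivity by a dimension count. Both steps quietly presuppose exactly what the paper derives as corollaries of the theorem. First, checking that the FFLV generators kill $v_\tau$ only shows that the ideal \emph{they generate} lands in $\operatorname{Ann}(v_\tau)$; to conclude well-definedness you would need $\operatorname{Ann}(v_\lambda)\subseteq\operatorname{Ann}(v_\tau)$, i.e.\ you would need to know that the FFLV generators actually present $V^a_{\mathbb Z}(\lambda)$ (equivalently $V^a_k(\lambda)$ for arbitrary $k$). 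That presentation over $\mathbb Z$/arbitrary characteristic is precisely what the paper extracts from the theorem via Mathieu's presentation of Demazure modules, not an input available in advance. Second, the dimension comparison relies on $\dim_k V^a_k(\lambda)$ being characteristic-free, which in turn relies on $V^a_{\mathbb Z}(\lambda)$ being $\mathbb Z$-free; the paper proves that freeness (Corollary following the theorem) as a consequence of the Demazure realization, not before it.

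The paper sidesteps both issues with a two-surjection sandwich and, crucially, builds the ``relation-checking'' map in the \emph{opposite} direction. Concretely: (a) a surjection $V^a_{\mathbb Z}(\lambda^*)\twoheadrightarrow \tilde V_{\mathbb Z}(\Psi(\lambda))_\tau$ is produced without relations at all, by reducing to the fundamental-weight case (Proposition~\ref{proposition2.7}, proved via the Levi $L(i)$, cominuscule considerations, and the Chevalley twist) and then passing through Cartan components of tensor products, with the needed isomorphism on the Demazure side supplied by standard monomial theory/Frobenius splitting; and (b) a surjection $\tilde V_{\mathbb Z}(\Psi(\lambda))_\tau\twoheadrightarrow V^a_{\mathbb Z}(\lambda^*)$ is obtained by showing $M_{\mathbb Z}(\lambda^*)\subseteq I_{\mathbb Z}(\lambda^*)$, where $M_{\mathbb Z}$ is the Demazure annihilator computed from Mathieu's explicit generators. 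This inclusion requires only the elementary fact that $f_\alpha^{(N)}v_{\lambda^*}=0$ for $N>\langle\alpha^\vee,\lambda^*\rangle$, together with $U(\mathfrak b)$-stability of the annihilator (from \cite{FFL3}, Prop.~2.3), and so is genuinely characteristic-free and non-circular. Two mutually surjective finitely generated $\mathbb Z$-modules are isomorphic, so the theorem follows; the FFLV presentation and $\mathbb Z$-freeness then drop out as corollaries. Your Step~2 (cyclicity of $v_\tau$ under $\mathfrak n^{-,a}$ alone) and Step~4 (identification of the $\tilde B$-orbit with the $N^{-,a}$-orbit and the Borel--Weil duality) are fine and match the paper, but the core of Steps~1 and~3 needs to be replaced by the double-surjection argument or by an independent, characteristic-free proof of the FFLV presentation.
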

Using the isomorphism above,  we deduce the defining relations
for $V^a(\lambda)$ from the defining relations of the Demazure module.
Translated back into the language of the abelianized algebras we get the following:
 in the $SL_n$-case let $R^{++}=R^+$ be the set of positive roots, in the symplectic case set
$$
R^{++}=\{\epsilon_i-\epsilon_j\mid 1\leq i< j\leq m\}\cup \{2\epsilon_i\mid
1\leq i \leq m\}.
$$
\begin{corollary*}
The abelianized module $V^a(\lambda)$ is as a cyclic $S^\bullet(\mathfrak n^{-,a})$-module isomorphic to
$S^\bullet(\mathfrak n^{-,a})/I(\lambda)$, where $I(\lambda)$ is the ideal:
$$
I(\lambda)= S^\bullet(\mathfrak n^{-,a})( U(\mathfrak n^+)\circ
\text{span}\{f_\alpha^{(\langle\lambda,\alpha^\vee\rangle+1)};\alpha \in R^{++}\})\subseteq S^\bullet(\mathfrak n^{-,a}).
$$
\end{corollary*}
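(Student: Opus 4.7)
The plan is to transfer the characteristic-free defining relations of the Demazure module $\tilde V(\Psi(\lambda^*))_\tau$ across the isomorphism provided by part (i) of the theorem. Both $V^a(\lambda)$ and $\tilde V(\Psi(\lambda^*))_\tau$ are cyclic $S^\bullet(\mathfrak n^{-,a})$-modules, generated respectively by the image of $v_\lambda$ and by $v_{\tau\Psi(\lambda^*)}$, and the isomorphism matches these two generators. Consequently the claim reduces to identifying the annihilator $J(\lambda)\subseteq S^\bullet(\mathfrak n^{-,a})$ of the cyclic vector with the ideal $I(\lambda)$.

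The inclusion $I(\lambda)\subseteq J(\lambda)$ is the easy direction and can be checked before appealing to part (i). The relation $f_\alpha^{(\langle\lambda,\alpha^\vee\rangle+1)}\cdot v_\lambda=0$ already holds in the Weyl module $V(\lambda)$ over $\mathbb Z$, because $v_\lambda$ is an $\mathfrak{sl}_2$-highest weight vector of weight $\langle\lambda,\alpha^\vee\rangle$ for the triple attached to $\alpha$ and $V(\lambda)$ is integrable. Taking the leading term under the PBW filtration sends $f_\alpha^{(m+1)}\in U(\mathfrak n^-)$ to the corresponding divided power in $S^\bullet(\mathfrak n^{-,a})$, so the relation descends to $V^a(\lambda)$. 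Stability under the adjoint $U(\mathfrak n^+)$-action on $\mathfrak n^{-,a}$ is automatic, since $\mathfrak n^+$ fixes $v_\lambda$ modulo lower pieces of the PBW filtration and therefore preserves the annihilator of the cyclic generator.

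The reverse inclusion $J(\lambda)\subseteq I(\lambda)$ is the substantial part. I would invoke a characteristic-free presentation of Demazure modules (in the spirit of Polo, Mathieu, Joseph, and Kumar--Littelmann): $\tilde V(\mu)_w$ is the quotient of the cyclic $U(\tilde{\mathfrak b})$-module on $v_{w\mu}$ by the left ideal generated by the divided-power relations $\tilde f_\beta^{(\langle w\mu,\beta^\vee\rangle+1)}\cdot v_{w\mu}$ for those positive $\tilde{\mathfrak g}$-roots $\beta$ with non-negative pairing. Specialising to $\mu=\Psi(\lambda^*)$, $w=\tau$ and then intersecting the resulting ideal with the subalgebra $S^\bullet(\mathfrak n^{-,a})\subseteq U(\tilde{\mathfrak b})$ should reproduce exactly the generators of $I(\lambda)$.

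The main obstacle is the explicit combinatorial matching required at this last step. Concretely, one must construct a bijection between $R^{++}$ and the set of positive $\tilde{\mathfrak g}$-roots whose root vectors lie in the image of (\ref{nilpotentshift}); verify under this bijection the equality $\langle\tau\Psi(\lambda^*),\beta^\vee\rangle=\langle\lambda,\alpha^\vee\rangle$ for the relevant roots; and show that sweeping the Demazure relations by the action of $U(\tilde{\mathfrak b})$ and restricting to $S^\bullet(\mathfrak n^{-,a})$ produces precisely the $U(\mathfrak n^+)$-orbit appearing in $I(\lambda)$, rather than a strictly larger or smaller set. All of this depends on the explicit descriptions of $\tau$ and $\Psi$ from Definitions~\ref{deftau}, \ref{tauSp}, \ref{mappsi} and \ref{mappsisymp}, and on the fact that the embedding (\ref{nilpotentshift}) carries distinct root spaces of $\mathfrak g$ to distinct root spaces of $\tilde{\mathfrak g}$ lying inside $\tilde{\mathfrak n}^+$.
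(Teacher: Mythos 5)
Your overall strategy is the same as the paper's: reduce the corollary to identifying the annihilator $J(\lambda)$ of the cyclic generator with $I(\lambda)$, establish the easy inclusion $I(\lambda)\subseteq J(\lambda)$ directly from the integrability relations, and extract the hard inclusion from Mathieu's presentation of the Demazure module $\tilde V(\Psi(\lambda^*))_\tau$ transported across the isomorphism of part (i). The easy direction is handled adequately, though you should say that $\mathfrak n^+$ annihilates $v_\lambda$ outright (not merely modulo lower filtration steps), which is what makes the $\circ$-stability of the annihilator immediate via \cite[Prop. 2.3]{FFL3}.

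There are two genuine gaps in the hard direction. First, a small one: you misstate Mathieu's presentation. The annihilator of $v_{\tau\Psi(\lambda^*)}$ in $U(\tilde{\mathfrak n}^+)$ is generated by $E_{\tilde\alpha}^{(m)}$ for all $m\ge 1$ when $\langle\tilde\alpha^\vee,\tau\Psi(\lambda^*)\rangle\ge 0$, and by $E_{\tilde\alpha}^{(-\langle\tilde\alpha^\vee,\tau\Psi(\lambda^*)\rangle+m)}$ for $m\ge 1$ otherwise; these are positive root vectors $E_{\tilde\alpha}$, not $\tilde f_\beta$'s, and both signs of pairing contribute. The second and more serious gap is that you stop exactly at the crux. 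You state that one must ``show that sweeping the Demazure relations by the action of $U(\tilde{\mathfrak b})$ and restricting to $S^\bullet(\mathfrak n^{-,a})$ produces precisely the $U(\mathfrak n^+)$-orbit, rather than a strictly larger or smaller set,'' and you label this the main obstacle without resolving it. This is precisely where the paper does its real work. One first shows (via the partition of positive $\tilde{\mathfrak g}$-roots into three types and a commutator analysis modulo $\tilde I(\infty)$) that $M_\ZZ(\lambda^*)$ is generated, as a left $S^\bullet_\ZZ(\mathfrak n^{-,a})$-module, by the $U_\ZZ(\mathfrak b^1\oplus\mathfrak b^2)\circ$-span of the $E_{\tilde\alpha}^{(-\langle\tilde\alpha^\vee,\tau\Psi(\lambda^*)\rangle+m)}$ with $\tilde\alpha$ of first type. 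This a priori gives a larger set of generators than the $U(\mathfrak n^+)$-orbit in the statement, since the adjoint action here is that of the two copies $\mathfrak b^1\oplus\mathfrak b^2$ rather than of a single $\mathfrak n^+$. One then needs Lemma~\ref{deltabandb}, proved by a geometric argument over an arbitrary algebraically closed field (identifying the $B^1\times B^2$-orbit through $E_{n,1}$ with a union of $\Delta(B)$-orbits and reducing modulo $I(\mathfrak d^3)$) and then lifted to $\ZZ$, to conclude that after projection to $S^\bullet(\mathfrak n^{-,a})$ the $U_\ZZ(\mathfrak b^1\oplus\mathfrak b^2)\circ$-span equals the $U_\ZZ(\Delta(\mathfrak b))\circ$-span. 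Without an argument here, the claim that the Demazure ideal restricts to exactly $I(\lambda)$ rather than something strictly larger remains unproved; naively ``intersecting'' is not enough.
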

The identification of the degenerate flag variety as a Schubert variety implies immediately (see also \cite{FF, FFiL}):
\begin{corollary*}
The degenerate flag variety ${\mathcal{F}^a}(\lambda)$ is projectively normal and has rational
singularities.
\end{corollary*}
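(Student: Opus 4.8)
The plan is to deduce the statement directly from part \textit{ii)} of the Theorem. That statement identifies ${\mathcal{F}^a}(\lambda)$ with the Schubert variety $X(\tau)$ \emph{as polarized varieties}: the isomorphism is compatible with the respective natural very ample line bundles, and --- since $H^0(X(\tau),\mathcal L_{\Psi(\lambda^*)})\simeq (V^a(\lambda))^*\simeq \tilde V(\Psi(\lambda^*))_\tau^{\,*}$ --- the embedding $X(\tau)\hookrightarrow \mathbb P(\tilde V(\Psi(\lambda^*))_\tau)$ appearing in \textit{ii)} is exactly the one defined by the \emph{complete} linear system $|\mathcal L_{\Psi(\lambda^*)}|_{X(\tau)}$. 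Since projective normality is a property of a polarized embedding and rational singularities is a property of the abstract variety, both are preserved under such an isomorphism, so it will suffice to record these two properties for $X(\tau)$.

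Now $\tilde G$ is $SL_{2n}(k)$ or $Sp_{4m}(k)$, the weight $\Psi(\lambda^*)$ is dominant (Definitions~\ref{mappsi} and \ref{mappsisymp}) and $\tau$ lies in the finite Weyl group $\tilde W$ (Definitions~\ref{deftau} and \ref{tauSp}), so $X(\tau)$ is a genuine Schubert variety inside the finite-dimensional (partial) flag variety $\tilde G/\tilde P$, with $\tilde P$ the stabilizer of $[v_{\tau\Psi(\lambda^*)}]$. I would then invoke the classical structure theory of Schubert varieties, valid in arbitrary characteristic via Frobenius splitting (see \cite{FF, FFiL} and the references therein): $X(\tau)$ is normal and Cohen--Macaulay; one has $H^{>0}(X(\tau),\mathcal L_{\Psi(\lambda^*)}^{\otimes n})=0$ for all $n\geq 0$ and the restriction maps $H^0(\tilde G/\tilde P,\mathcal L_{n\Psi(\lambda^*)})\to H^0(X(\tau),\mathcal L_{\Psi(\lambda^*)}^{\otimes n})$ are surjective, whence the multiplication maps $\mathrm{Sym}^{n}H^0(X(\tau),\mathcal L_{\Psi(\lambda^*)})\to H^0(X(\tau),\mathcal L_{\Psi(\lambda^*)}^{\otimes n})$ are surjective for all $n$ and $X(\tau)$ is projectively normal for the embedding above; and a Bott--Samelson resolution $\pi\colon Z\to X(\tau)$ satisfies $\pi_*\mathcal O_Z=\mathcal O_{X(\tau)}$ and $R^{i}\pi_*\mathcal O_Z=0$ for $i>0$, i.e. $X(\tau)$ has a rational resolution (rational singularities in characteristic zero; the corresponding Frobenius-split statement in positive characteristic). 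Transporting all of this along the isomorphism of \textit{ii)} yields the corollary.

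The hard part is essentially nil --- this is a bookkeeping corollary --- and the only two points needing care are that the isomorphism of \textit{ii)} is one of \emph{polarized} varieties (so that projective normality, which depends on the embedding and not just on the abstract scheme, does transfer) and that $\mathcal L_{\Psi(\lambda^*)}|_{X(\tau)}$ is the very ample generator used in the Schubert-variety literature; both are built into the section-space identification $H^0(X(\tau),\mathcal L_{\Psi(\lambda^*)})\simeq (V^a(\lambda))^*$ of \textit{ii)}. In characteristic zero one may, if desired, bypass Frobenius splitting: Borel--Weil--Bott gives the higher cohomology vanishing which, together with normality and Cohen--Macaulayness of $X(\tau)$, yields projective normality, and a Bott--Samelson resolution is then a rational resolution; the uniform characteristic-free statement is however cleanest through the cited references.
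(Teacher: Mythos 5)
Your proposal is correct and follows essentially the same route as the paper: identify ${\mathcal{F}^a}(\lambda)$ with the Schubert variety $X(\tau)$ via part~\textit{ii)} of the Theorem (noting, as you rightly emphasize, that the isomorphism respects the polarizations so projective normality transfers) and then invoke the classical characteristic-free results on Schubert varieties (Frobenius splitting, cohomology vanishing, Bott--Samelson rational resolutions, as in \cite{M,MR,R,RR}). Your added detail about the complete linear system and the Borel--Weil--Bott alternative in characteristic zero is accurate, but the underlying argument is the one the paper gives.
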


\section{Some special commutative unipotent subgroups}\label{sectionone}
Let $k$ be a field. Given a subspace $\mathfrak N\subseteq M_n(k)$ and
a vector space automorphism $\eta:\mathfrak N\rightarrow \mathfrak N$,
denote by ${\mathfrak N}_\eta^{a} \subseteq M_{2n}(k)$ respectively
${N}_\eta^{a} \subseteq GL_{2n}(k)$
the following commutative nilpotent Lie subalgebra of $M_{2n}(k)$, respectively
commutative unipotent subgroup of $GL_{2n}(k)$:
$$
{\mathfrak N}_\eta^{a}:=\left\{\left(\begin{array}{cc} 0 & \eta(A) \\ 0 & 0 \end{array}\right)\mid A\in \mathfrak N\right\},\quad
{N}_\eta^{a} :=\left\{\left(\begin{array}{cc} 1\hskip-2pt \text{I} & \eta(A) \\ 0 & 1\hskip-2pt
\text{I}\end{array}\right)\mid A\in \mathfrak N\right\}.
$$
If $\mathfrak N\subseteq M_n(k)$ is a Lie subalgebra, then we think of ${\mathfrak N}_\eta^{a}$
as an abelianized version of $\mathfrak N$. Similarly one may think of ${N}_\eta^{a}$ as
an abelianized version of a subgroup $N\subseteq GL_{2n}(k)$. We will be more precise in the
following examples.

\begin{example}\label{example1}\rm\noindent
Let $k$ be an algebraically closed field of characteristic zero. We fix as a maximal torus $T\subset SL_n$
the subgroup of diagonal matrices, and let $B$ be the Borel subgroup of upper triangular matrices.
Let us denote by $\mathfrak{sl}_n$, $\mathfrak{b}$ and $\mathfrak{h}$ the corresponding Lie algebras
and let $\mathfrak{g}=\mathfrak{n}^-\oplus \mathfrak{h}\oplus \mathfrak{n}^+$ be the Cartan decomposition.
The choice of a maximal torus and a Borel subgroup as above determines the set of
positive roots $\Phi^+$ and hence, according to the adjoint action of
$\mathfrak{h}$, the root space decomposition $\mathfrak{n}^-=\bigoplus_{\alpha\in\Phi^+}
\mathfrak{n}^-_{-\alpha}$.
In this example we set $\mathfrak{N}=\mathfrak{n}^-$, and $N=U^{-}$ is the unipotent radical of the opposite
Borel subgroup $B^-$. The map $\eta$ is the identity map, so we just omit it.
We write henceforth $\mathfrak{n}^{-,a}$ for ${\mathfrak N}^{a}\subset \mathfrak{sl}_{2n}(k)$
and $N^{-,a}$ for $N^{a}\subset SL_{2n}$.

Note that $\mathfrak{n}^{-,a}\subset \mathfrak{sl}_{2n}$ is a Lie subalgebra
of the Borel subalgebra $\tilde{\mathfrak{b}}\subset \mathfrak{sl}_{2n}$
and ${N}^{-,a}$ is an abelian subgroup of the Borel subgroup
$\tilde B \subset SL_{2n}$ (of upper triangular matrices). We can think of
 ${N}^{-,a}$ as an abelianized version of $U^-$.

The subgroup  ${N}^{-,a}$, as well as the Lie algebra $\mathfrak{n}^{-,a}$, is stable under conjugation with respect to the maximal torus
$\tilde T\subset SL_{2n}$, where $\tilde T\subset SL_{2n}$ consists of the diagonal matrices.
The group ${N}^{-,a}$ hence decomposes as a product of root subgroups of the group $SL_{2n}$,
and $\mathfrak{n}^{-,a}$ decomposes into the direct sum of root subspaces for the Lie algebra $\mathfrak{sl}_{2n}$.
We get an induced map $\phi:\Phi^+\rightarrow \tilde\Phi^+$ between the set of positive roots of  $\mathfrak{sl}_{n}$
and the positive roots of  $\mathfrak{sl}_{2n}$, such that $\mathfrak{n}^{-,a}=\bigoplus_{\alpha\in\Phi^+}
\mathfrak{n}^{-,a}_{\phi(\alpha)}$.
\end{example}
\begin{example}\label{example2}\rm\noindent
Let $k$ be an algebraically closed field of characteristic $0$.
Let $\{e_1,\ldots, e_{2n}\}$ be the canonical basis of $k^{2n}$,
we fix a non-degenerate skew symmetric form by the conditions
$\langle e_i,e_j\rangle=\delta_{j,2n-i+1}=-\langle e_{j}, e_i\rangle$ for $1\le i\le n$, $1\le j\le 2n$.
Let $Sp_{2m}$ be the associated symplectic group.
By the choice of the form we can fix as a Borel subgroup $B$ the subgroup
of upper triangular matrices in $Sp_{2m}$, and let $T$ be its
maximal torus consisting of diagonal matrices.
Let us denote by $\mathfrak{sp}_{2m}$, $\mathfrak{b}$ and $\mathfrak{h}$ the
corresponding Lie algebras and let $\mathfrak{g}=\mathfrak{n}^-\oplus \mathfrak{h}\oplus \mathfrak{n}^+$ be the Cartan decomposition.

The choice of the torus and the Borel subgroup as above determines a set of
positive roots $\Phi^+$ and hence, according to the adjoint action of
$\mathfrak{h}$, the root space decomposition $\mathfrak{n}^-=\bigoplus_{\alpha\in\Phi^+}
\mathfrak{n}^-_{-\alpha}$.
In this example we set $\mathfrak{N}=\mathfrak{n}^-$, and $N=U^{-}$ is the unipotent radical of the opposite
Borel subgroup $B^-$.
Let $\eta:\mathfrak{n}^-\rightarrow \mathfrak{n}^-$
be the linear map sending a matrix $(m_{i,j})_{1\le i,j\le 2n}$
to the matrix $(m'_{i,j})_{1\le i,j\le 2n}$, where $m'_{i,j}=m_{i,j}$
if $i\le n$ or $j\le n$ and $m'_{i,j}=-m_{i,j}$ if both indices are strictly larger than $n$.
We write henceforth $\mathfrak{n}_\eta^{-,a}$ for ${\mathfrak N}_\eta^{a}\subset \mathfrak{sp}_{4n}(k)$
and $N_\eta^{-,a}$ for $N_\eta^{a}\subset Sp_{4n}$.

Note that $\mathfrak{n}_\eta^{-,a}\subset \mathfrak{sp}_{4n}$ is a Lie subalgebra
of the Borel subalgebra (of upper triangular matrices) $\tilde{\mathfrak{b}}\subset \mathfrak{sp}_{4n}$
and ${N}_\eta^{-,a}$ is an abelian subgroup of the Borel subgroup
$\tilde B \subset Sp_{4n}$ (of upper triangular matrices). We can think of
 ${N}^{-,a}_\eta$ as an abelianized version of $U^-$.

The subgroup  ${N}^{-,a}_\eta$ is stable under conjugation with respect to the maximal torus
$\tilde T\subset Sp_{4n}$, where $\tilde T\subset Sp_{4n}$ consists of the diagonal matrices.
The group ${N}^{-,a}_\eta$ hence decomposes as a product of root subgroups of the group $Sp_{4n}$
and $\mathfrak{n}^{-,a}$ decomposes into the direct sum of root subspaces for the Lie algebra $\mathfrak{sp}_{4n}$.
We get an induced map $\phi:\Phi^+\rightarrow \tilde\Phi^+$ between the set of positive roots of  $\mathfrak{sp}_{2n}$
and the positive roots of  $\mathfrak{sp}_{4n}$, such that $\mathfrak{n}^{-,a}_{\eta}=\bigoplus_{\alpha\in\Phi^+}
\mathfrak{n}^{-,a}_{\phi(\alpha),\eta}$.

\end{example}
\begin{example}\label{example3}\rm\noindent
To get a characteristic free approach for $G$ as above,
let  $G_{\mathbb Z}$ be a split and connected simple algebraic ${\mathbb Z}$-group of type $\tt A_n$ or $\tt C_n$.
For any commutative ring $A$ set $G_A=(G_{\mathbb Z})_A$, and for a field set $G=G_k$, for this and the following see also \cite{J}.
Then $G_k$ is for any algebraically closed field a reduced $k$-group, and it is connected and reductive.
Its Lie algebra $\text{Lie}(G_{\mathbb Z})$ is a free Lie algebra of finite rank and
$\text{Lie\,} G_k=\text{Lie\,} (G_{\mathbb Z})\otimes_{\mathbb Z} k$. Let $T_{\mathbb{Z}}\subset G_{\mathbb Z}$
be a split maximal torus and set $T_A=(T_{\mathbb Z})_A$ for any ring $A$ and $T=T_k$. We have a root space
decomposition $\text{Lie\,} G= \text{Lie\,} T \oplus \bigoplus_{\alpha\in \Phi} (\text{Lie\,} G)_\alpha$
where $(\text{Lie\,} G)_\alpha=(\text{Lie\,} G_{\mathbb Z})_\alpha\otimes_{\mathbb Z} k$,
and corresponding root subgroups (defined over $\mathbb Z$) $x_\alpha:\mathbb G_a\rightarrow G$
such that the tangent map $\text{d}x_\alpha$ induces an isomorphism between the Lie algebra
of the additive group $\mathbb G_a$ and $(\text{Lie\,} G)_\alpha$. The functor which associates
to any commutative ring $A$ the group $x_\alpha(\mathbb G_a(A))=x_\alpha(A)$ is a closed subgroup
of $G$ denoted by $U_\alpha$, and we have  $\text{Lie\,} (U_\alpha)=(\text{Lie\,} G)_\alpha$.
Over $\mathbb Z$ we denote the corresponding subgroup by $U_{\alpha, \mathbb Z}$,
over a field $k$ we have $U_\alpha = (U_{\alpha, \mathbb Z})_k$.

The construction described above in Examples~\ref{example1} and \ref{example2} makes
(in this language) sense over $\mathbb Z$ or over any field. As before, let $\tilde G$ be the group of the same type but twice the rank,
we denote the corresponding Borel subgroup, maximal torus etc. by $\tilde B$, $\tilde T$ etc.
The construction in the examples above associates to every root $\alpha\in \Phi$ a root $\phi(\alpha)$ in the root system of
$\tilde G$. For the $\mathbb Z$-group $G_{\mathbb Z}$
we have the subgroup $U^-_{\mathbb Z}$ and the Lie algebra
$\mathfrak{n}^-_{\mathbb{Z}}=\bigoplus_{\alpha\in\Phi^+}
\mathfrak{n}^-_{\mathbb{Z},-\alpha}$, and we associate to this pair a new pair
given by a commutative subgroup $N^{-,a}_\eta$ of the $\mathbb Z$-group $\tilde G_{\mathbb Z}$ 
and an abelian Lie algebra $\mathfrak{n}^{-,a}_{\mathbb{Z},\eta}$. The first
is the subgroup of the Borel subgroup $\tilde B_{\mathbb Z}\subset \tilde G_{\mathbb Z}$
generated by the commuting root subgroups $U_{{\phi(\alpha)},\mathbb Z}$, $\alpha\in \Phi^{+}$, and
the second is the abelian Lie algebra $\mathfrak{n}^{-,a}_{\mathbb{Z},\eta}=\bigoplus_{\alpha\in\Phi^+}
\widetilde{(\text{Lie\,} G)}_{\mathbb{Z},\phi(\alpha)}$ given as the sum of root subalgebras.
\end{example}

\section{A special Schubert variety -- the $SL_n$-case}\label{section two}
We want to realize in the situation of Example~\ref{example1} the abelianized
representation $V(\lambda)^{a}$ for $N_{\eta}^{-,a}$
as a Demazure submodule of an irreducible representation for the group
$SL_{2n}$.

\subsection{A special Weyl group element}
Let $\tilde W$ be the Weyl group of $SL_{2n}(\mathbb C)$, it is the symmetric group $\mathcal S_{2n}$
generated by the transpositions $s_i$, $i=1,\ldots,2n-1$.  Let $\mathfrak{h}\subset\mathfrak{g}=\mathfrak{sl}_n$
(respectively $\tilde{\mathfrak{h}}\subset\tilde{\mathfrak{g}}=\mathfrak{sl}_{2n}$) be the Cartan subalgebra of traceless complex
diagonal matrices.  For an element $\alpha\in\mathfrak{h}^\ast$ and an element $h\in\mathfrak{h}$ we denote by $\langle h,\alpha\rangle$ the
evaluation of $\alpha$ in $h$.
Let $\{\epsilon_1,\cdots, \epsilon_{n}\}$ be the elements of the dual vector space $\mathfrak{h}^\ast$ such that $\langle h,\epsilon_i\rangle$
is the $i$-th entry in the diagonal matrix $h\in \mathfrak{h}$.
We use the same notation $\langle\tilde h,\tilde \alpha\rangle$ for elements $\tilde h\in \tilde{\mathfrak{h}}$ and
$\tilde \alpha\in \tilde{\mathfrak{h}}^*$, and the linear forms $\{\tilde{\epsilon}_1,\cdots, \tilde{\epsilon}_{2n}\}$ in $\tilde{\mathfrak{h}}^*$
are defined as above.

The roots of $\mathfrak{g}$ (resp. $\tilde{\mathfrak{g}}$) are the elements
$\alpha_{i,j}:=\epsilon_i-\epsilon_j$ (resp. $\tilde{\alpha}_{i,j}=\tilde{\epsilon}_i-\tilde{\epsilon}_j$) for $i\neq j$. We choose as a
Borel subalgebra of $\mathfrak g$ the subalgebra $\mathfrak{b}$ of upper triangular matrices.
The corresponding simple roots are $\alpha_1,\cdots, \alpha_{n-1}$ given by $\alpha_i:=\alpha_{i,i+1}$.
For every root $\alpha$, we denote by $\alpha^\vee$ its coroot: this is the unique element of $\mathfrak{h}$
such that the reflection $s_\alpha\in\mathfrak{h}^\ast$ along $\alpha$ acts as
$s_\alpha(\lambda)=\lambda-\langle \alpha^\vee,\lambda \rangle\alpha$. Moreover we denote by $E_\alpha$ the corresponding root vector. 
We denote by $\omega_i=\epsilon_1+\cdots+\epsilon_i$ (resp. $\tilde{\omega}_i=\tilde{\epsilon}_1+\cdots+\tilde{\epsilon}_i$)
the i--th fundamental weight of $\mathfrak{g}$ (resp.  $\tilde{\mathfrak{g}}$), where $i=1,2,\cdots, n-1$ (resp. $i=1,\ldots,2n-1$).
They are characterized by the property $\langle \alpha_i^\vee,\omega_j\rangle=\delta_{i,j}$.
\begin{defn}\label{mappsi}
Let $\Psi:\mathfrak{h}^\ast\rightarrow\tilde{\mathfrak{h}}^\ast$ be the linear map defined on the weight lattice as follows
$$
\Psi(\sum_{i=1}^{n-1} a_i\omega_i):=\sum_{i=1}^{n-1} a_i\tilde\omega_{2i}.
$$
\end{defn}
Note that $\Psi$ sends dominant weights to dominant weights.  For every fundamental weight $\tilde{\omega}_k$,
we denote the corresponding parabolic subgroup by $P_{\tilde{\omega}_k}$ and by $\tilde{W}_{\tilde{\omega}_{k}}$
the corresponding subgroup of $\tilde{W}$, which is the Weyl group of the semi-simple part of $P_{\tilde{\omega}_k}$.
Note that $\tilde{W}_{\tilde{\omega}_{k}}$ is generated by all the simple transpositions $s_i$'s but $s_{k}$. Let
$\rho=\omega_1+\cdots+\omega_{n-1}$. Then $\Psi(\rho)=\tilde{\omega}_2+\tilde{\omega}_4+\cdots+\tilde{\omega}_{2n-2}$.
The parabolic subgroup $Q=P_{\tilde{\omega}_2+\cdots+\tilde{\omega}_{2n-2}}$ which is  the stabilizer of $\Psi(\rho)$ will play an important
role. The Weyl group of the semisimple part of $Q$ is denoted by $\tilde{W}^J$.

\begin{defn}\label{deftau} We define in the Weyl group $\tilde W$ the element $\tau$ as follows:
\begin{equation}\label{tau}
\tau=(s_n s_{n+1} \cdots s_{2n-3} s_{2n-2})(s_{n-1}s_n \cdots s_{2n-4})\cdots
(s_4 s_5 s_6)(s_3 s_4) s_2.
\end{equation}
\end{defn}
It is easy to see that the decomposition is reduced and $\tau$ is a minimal length representative in its class in $\tilde W/\tilde{W}^J$.
Another description of $\tau$ can be given by viewing $\tau$ as a permutation of the set $\{1,\ldots,2n\}$:
\begin{equation}\label{Def:Tau}
\tau (t)=\left\{\begin{array}{cl} n+k&\textrm{ if }t=2k\\k&\textrm{ if }t=2k-1\end{array}\right.\;\;(k=1,2,\cdots,n).
\end{equation}
It follows now immediately from \eqref{Def:Tau}:
\begin{lem}\label{2.1}
In the irreducible $SL_{2n}(\mathbb C)$-representation $\tilde V(\tilde\omega_{2i})=\Lambda^{2i}\mathbb
C^{2n}$ let $v_0$ be the highest weight vector
$v_0=e_1\wedge e_2\wedge\ldots\wedge e_{2i}$. Then (up to sign)
$$
\tau(v_0)=v_\tau=e_1\wedge e_2\wedge\ldots\wedge e_{i}\wedge e_{n+1}\wedge
e_{n+2}\wedge\ldots\wedge e_{n+i}.
$$
\end{lem}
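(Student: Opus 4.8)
The plan is to compute $\tau(v_0)$ directly, using the permutation description \eqref{Def:Tau} of $\tau$ together with the standard description of the $\tilde W$-action on $\Lambda^{2i}\mathbb C^{2n}$. Recall that for $w\in\tilde W=\mathcal S_{2n}$ a representative $\dot w$ in the normalizer of $\tilde T$ acts on $\mathbb C^{2n}$ by $\dot w\cdot e_j=\pm e_{w(j)}$, hence on $\Lambda^{2i}\mathbb C^{2n}$ by
$$
\dot w\cdot(e_{j_1}\wedge\cdots\wedge e_{j_{2i}})=\pm\, e_{w(j_1)}\wedge\cdots\wedge e_{w(j_{2i})}.
$$
In particular, since the $\tilde T$-weight of $e_{w(1)}\wedge\cdots\wedge e_{w(2i)}$ is $\tilde\epsilon_{w(1)}+\cdots+\tilde\epsilon_{w(2i)}=w\tilde\omega_{2i}$, this vector spans the one-dimensional weight space $\tilde V(\tilde\omega_{2i})_{w\tilde\omega_{2i}}$, so it equals $v_w$ up to sign.

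Next I would apply this with $w=\tau$ and read off $\tau(t)$ for $t=1,\dots,2i$ from \eqref{Def:Tau}: writing $t=2k-1$ gives $\tau(t)=k$ and $t=2k$ gives $\tau(t)=n+k$, so as $t$ runs over $1,2,3,\dots,2i-1,2i$ the images run over $1,n+1,2,n+2,\dots,i,n+i$. Therefore
$$
\tau(v_0)=v_\tau=\pm\, e_1\wedge e_{n+1}\wedge e_2\wedge e_{n+2}\wedge\cdots\wedge e_i\wedge e_{n+i}.
$$
Finally, reordering the wedge factors so that the factors $e_1,\dots,e_i$ come first and $e_{n+1},\dots,e_{n+i}$ come after only changes the overall sign (the shuffle contributing a sign $(-1)^{\binom{i}{2}}$, which is irrelevant here), yielding $v_\tau=\pm\,e_1\wedge\cdots\wedge e_i\wedge e_{n+1}\wedge\cdots\wedge e_{n+i}$, as claimed.

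There is essentially no obstacle: the only subtlety is the bookkeeping of signs, coming both from the choice of representatives $\dot s_j$ of the simple reflections in $N_{\tilde G}(\tilde T)$ and from the reordering of the exterior product, and since the statement is asserted only up to sign none of this needs to be tracked. (If a precise sign were wanted, one would fix the $\dot s_j$ explicitly, count inversions of $\tau$ restricted to $\{1,\dots,2i\}$, and combine with the shuffle sign above.)
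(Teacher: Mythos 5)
Your proof is correct and takes exactly the approach the paper intends: the paper simply states that the lemma ``follows now immediately'' from the permutation description \eqref{Def:Tau} of $\tau$, and your computation of $\tau(1),\dots,\tau(2i)$ and the subsequent reordering of wedge factors is precisely the spelled-out version of that remark.
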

Let $\lambda=b_1\epsilon_1+\ldots +b_{n-1}\epsilon_{n-1}$, $b_1\ge \ldots \ge
b_{n-1}\ge 0$, be a dominant weight for $SL_n(\mathbb C)$.
\begin{lem}\label{2.2}
$\tau(\Psi(\lambda))=b_1\tilde\epsilon_1+\ldots + b_{n-1}\tilde\epsilon_{n-1} +
b_1\tilde\epsilon_{n+1}+\ldots b_{n-1}\tilde\epsilon_{2n-1}.$
\end{lem}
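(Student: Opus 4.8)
The statement is proved by an explicit computation in the $\tilde\epsilon$-coordinates, so the plan is simply to trace $\lambda$ through the definitions of $\Psi$ and $\tau$. First I would pass from the $\epsilon$-description of $\lambda$ to the fundamental-weight description: since $\omega_i=\epsilon_1+\cdots+\epsilon_i$, writing $\lambda=\sum_{i=1}^{n-1}m_i\omega_i$ and comparing the coefficient of $\epsilon_j$ on both sides gives $b_j=\sum_{i\ge j}m_i$, hence, with the convention $b_n=0$, $m_i=b_i-b_{i+1}$ for all $i=1,\dots,n-1$.

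Next, applying Definition~\ref{mappsi} termwise yields $\Psi(\lambda)=\sum_{i=1}^{n-1}(b_i-b_{i+1})\tilde\omega_{2i}$. I would reorganize this by summation by parts as $\sum_{i=1}^{n-1}b_i(\tilde\omega_{2i}-\tilde\omega_{2i-2})$, with the convention $\tilde\omega_0:=0$; since $\tilde\omega_{2i}-\tilde\omega_{2i-2}=\tilde\epsilon_{2i-1}+\tilde\epsilon_{2i}$, this gives the clean formula $\Psi(\lambda)=\sum_{i=1}^{n-1}b_i(\tilde\epsilon_{2i-1}+\tilde\epsilon_{2i})$. In particular $\Psi(\lambda)$ involves only $\tilde\epsilon_1,\dots,\tilde\epsilon_{2n-2}$, the coefficients of $\tilde\epsilon_{2n-1}$ and $\tilde\epsilon_{2n}$ being zero.

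Finally I would apply $\tau$, using that $\tilde W=\mathcal S_{2n}$ acts on $\tilde{\mathfrak h}^\ast$ by $\tau(\tilde\epsilon_l)=\tilde\epsilon_{\tau(l)}$ with $\tau$ as in \eqref{Def:Tau}; concretely $\tau(\tilde\epsilon_{2k-1})=\tilde\epsilon_k$ and $\tau(\tilde\epsilon_{2k})=\tilde\epsilon_{n+k}$ (this is the same convention under which Lemma~\ref{2.1} holds, so no sign or direction ambiguity is introduced). Substituting into the previous display gives $\tau(\Psi(\lambda))=\sum_{i=1}^{n-1}b_i(\tilde\epsilon_i+\tilde\epsilon_{n+i})$, which is exactly the asserted expression $b_1\tilde\epsilon_1+\cdots+b_{n-1}\tilde\epsilon_{n-1}+b_1\tilde\epsilon_{n+1}+\cdots+b_{n-1}\tilde\epsilon_{2n-1}$.

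There is no genuine obstacle here; the argument is a routine bookkeeping computation. The only two points that deserve a moment's care are the index shift $i\mapsto 2i$ built into $\Psi$ (this is precisely what forces the coefficients of $\tilde\epsilon_{2n-1}$ and $\tilde\epsilon_{2n}$ in $\Psi(\lambda)$ to vanish, so that $\tau$ need only be evaluated on $\tilde\epsilon_1,\dots,\tilde\epsilon_{2n-2}$) and fixing the action convention for $\tau$ on weights so that it is consistent with \eqref{Def:Tau} and with Lemma~\ref{2.1}.
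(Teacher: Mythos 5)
Your proof is correct and, in substance, the same routine index computation the paper intends: the paper's one-line proof invokes Lemma~\ref{2.1}, which records $\tau(\tilde\omega_{2i})=\tilde\epsilon_1+\cdots+\tilde\epsilon_i+\tilde\epsilon_{n+1}+\cdots+\tilde\epsilon_{n+i}$, and then one applies this to $\Psi(\lambda)=\sum(b_i-b_{i+1})\tilde\omega_{2i}$ and telescopes; you instead first rewrite $\Psi(\lambda)=\sum b_i(\tilde\epsilon_{2i-1}+\tilde\epsilon_{2i})$ by Abel summation and then apply $\tau$ on $\tilde\epsilon$-coordinates via \eqref{Def:Tau}, which is the same ingredient underlying Lemma~\ref{2.1}. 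Your care about the Weyl-group action convention $\tau(\tilde\epsilon_l)=\tilde\epsilon_{\tau(l)}$ and its consistency with Lemma~\ref{2.1} is exactly the one point worth checking, and you got it right.
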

\begin{proof}
Follows directly from Lemma~\ref{2.1} above.
\end{proof}
In Example~\ref{example1} we have introduced a map $\phi:\Phi^+\rightarrow\tilde\Phi^+$
between the positive roots of $\mathfrak{sl}_n$ and the positive roots of $\mathfrak{sl}_{2n}$.
Note that the image of $\alpha=\epsilon_i-\epsilon_j$, $1\le i<j\le n$ is the root
$\phi(\alpha)=\tilde\epsilon_j-\tilde\epsilon_{n+i}$.

\begin{lem}\label{2.4}
\begin{itemize}
\item[{\it i)}] Let $\lambda$ be a dominant weight for $SL_{n}(\mathbb C)$. For a positive
$SL_{2n}$-root $\tilde\alpha$ we have $\langle \tilde \alpha^\vee,\tau(\Psi(\lambda))\rangle<0$
only if the root space of $\tilde\alpha$ lies in $\mathfrak n^{-,a}$.
\item[{\it ii)}] Let $\lambda$ be a dominant $SL_n$-weight and let $\alpha=\epsilon_i-\epsilon_j$ be a positive $SL_n$-root, then:
$$
\langle\alpha^\vee,\lambda \rangle =
-\langle\phi(\alpha)^\vee,\tau(\Psi(\lambda))\rangle.
$$
\item[{\it iii)}] Let $\lambda$ be a dominant weight for $SL_{n}(\mathbb C)$ and let $\tilde\alpha=\tilde\epsilon_p-\tilde\epsilon_q$
be a positive $SL_{2n}$-root. Then $E_{\tilde\alpha} v_{\tau}\not=0$ in $\tilde V(\Psi(\lambda))$ only if
$\tilde\alpha$ is of the form $\tilde\alpha=\tilde\epsilon_{j}-\tilde\epsilon_{n+i}$, $1\le i<j\le n$ and $\langle (\epsilon_i-\epsilon_j)^\vee,\lambda\rangle>0$.
\end{itemize}
\end{lem}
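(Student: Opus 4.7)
The plan is to reduce (i) and (ii) to explicit coefficient comparisons based on Lemma~\ref{2.2}, and to handle (iii) by transporting the question to the highest-weight vector via a lift of $\tau$, combined with a direct computation of the inversion set of $\tau$.

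For part (i) I would start by reading off from Lemma~\ref{2.2} that, writing $\lambda=\sum_k b_k\epsilon_k$ with $b_1\ge\cdots\ge b_{n-1}\ge 0$, the coefficient of $\tilde\epsilon_p$ in $\tau(\Psi(\lambda))$ equals $b_p$ for $1\le p\le n-1$, equals $b_{p-n}$ for $n+1\le p\le 2n-1$, and is $0$ for $p\in\{n,2n\}$. For a positive root $\tilde\alpha=\tilde\epsilon_p-\tilde\epsilon_q$ the pairing $\langle\tilde\alpha^\vee,\tau(\Psi(\lambda))\rangle$ is simply the difference of those two coefficients, and monotonicity of the $b_k$'s forces this difference to be $\ge 0$ in every case except when $p\le n<q$ and $q-n<p$. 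Setting $j:=p$ and $i:=q-n$, one recovers $\tilde\alpha=\tilde\epsilon_j-\tilde\epsilon_{n+i}$ with $1\le i<j\le n$, which by Example~\ref{example1} is exactly the description of the roots supported in $\mathfrak{n}^{-,a}$. Part (ii) is then a one-line substitution from the same coefficient table: $\langle\alpha^\vee,\lambda\rangle=b_i-b_j$ (with the convention $b_n=0$), while $\phi(\alpha)=\tilde\epsilon_j-\tilde\epsilon_{n+i}$ yields $\langle\phi(\alpha)^\vee,\tau(\Psi(\lambda))\rangle=b_j-b_i$.

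For part (iii) I would pick a lift $n_\tau$ of $\tau$ in the normalizer of the maximal torus so that $v_\tau$ is a nonzero scalar multiple of $n_\tau\cdot v_0$, where $v_0$ is a highest weight vector. Since $n_\tau^{-1}E_{\tilde\alpha}n_\tau$ differs from $E_{\tau^{-1}(\tilde\alpha)}$ only by a nonzero scalar, I obtain the equivalence $E_{\tilde\alpha}v_\tau\neq 0\iff E_{\tau^{-1}(\tilde\alpha)}v_0\neq 0$. As $v_0$ is killed by every positive root vector, this forces $\tau^{-1}(\tilde\alpha)$ to be a negative root, i.e.\ $\tilde\alpha$ to lie in the inversion set $N(\tau)=\{\tilde\beta>0\mid \tau^{-1}(\tilde\beta)<0\}$. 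Using the permutation formula \eqref{Def:Tau} directly, one checks $N(\tau)=\{\tilde\epsilon_j-\tilde\epsilon_{n+i}\mid 1\le i<j\le n\}=\phi(\Phi^+)$; this can be cross-verified against the length $\ell(\tau)=n(n-1)/2$ read off from the reduced word in Definition~\ref{deftau}. For such $\tilde\alpha=\phi(\alpha)$, writing $\tau^{-1}(\tilde\alpha)=-\gamma$ with $\gamma>0$, the vector $E_{-\gamma}v_0$ is nonzero iff $\langle\gamma^\vee,\Psi(\lambda)\rangle>0$ (the standard $\mathfrak{sl}_2$ criterion, using dominance of $\Psi(\lambda)$). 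Weyl-invariance of the pairing together with part (ii) gives $\langle\gamma^\vee,\Psi(\lambda)\rangle=-\langle\tilde\alpha^\vee,\tau(\Psi(\lambda))\rangle=\langle\alpha^\vee,\lambda\rangle$, translating the positivity condition into precisely the one in the statement.

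I expect the main technical obstacle to be the explicit identification of $N(\tau)$ with $\phi(\Phi^+)$; once that combinatorial input is in hand, the remainder of the argument is routine bookkeeping around extremal weight vectors and the definition of $\phi$.
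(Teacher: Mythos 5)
Your proposal is correct and follows essentially the same route as the paper: the heart of the matter is the case-by-case computation of $\langle\tilde\alpha^\vee,\tau(\Psi(\lambda))\rangle$ from Lemma~\ref{2.2}, which immediately yields (i) and (ii); the paper's published proof is exactly this coefficient table, followed by the remark ``which proves the lemma.'' Your treatment of (iii) -- transporting the question to $v_0$ via a lift $n_\tau$, identifying the inversion set $N(\tau)$, and applying the $\mathfrak{sl}_2$-criterion at the highest weight vector -- simply spells out the standard extremal-weight-vector argument that the paper leaves implicit, and your cross-check of $N(\tau)$ against the length $n(n-1)/2$ is a harmless redundancy since $N(\tau)$ is already determined by the sign pattern in the table.
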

\begin{proof}
Let $\tilde\alpha=\tilde\epsilon_i-\tilde\epsilon_j$ be a positive root. Lemma~\ref{2.2} implies
that for $\lambda=b_1\epsilon_1+\ldots +b_{n-1}\epsilon_{n-1}$ we get
$$
\langle \tilde \alpha^\vee,\tau(\Psi(\lambda))\rangle =
\left\{
\begin{array}{rl}
b_i-b_j \ge 0&\text{if\ }1\le i<j\le n ,\\
b_i-b_{j-n} \ge 0&\text{if\ }1\le i\le n,n+i\le j\le 2n ,\\
b_i-b_{j-n} \le 0&\text{if\ }1\le i\le n,n+1\le j < n+i ,\\
b_{i-n}-b_{j-n} \ge 0&\text{if\ }n+1\le i<j\le 2n, \\
\end{array}
\right.
$$
which proves the lemma.
\end{proof}

The decomposition in \eqref{tau} is reduced, but if we apply $\tau$ to a fundamental weight, then it is
possible to omit some of the reflections. A simple calculation shows:
\begin{lem}\label{Lemma2.3}
Let $\tilde\omega_{2i}$ be the $2i$-th fundamental weight for $SL_{2n}({\mathbb C})$. Then
$$
\tau(\tilde\omega_{2i})=(s_n s_{n+1} \cdots s_{n+i-1})\cdots
(s_{i+2} \cdots  s_{2i+1})(s_{i+1} \cdots s_{2i-1} s_{2i})(\tilde\omega_{2i}).
$$
\end{lem}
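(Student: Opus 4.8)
The statement asserts that when the long reduced word $\tau$ of \eqref{tau} is applied to the particular fundamental weight $\tilde\omega_{2i}$, many of the simple reflections act trivially, leaving only the displayed shorter (still reduced-on-the-orbit) expression. My plan is to prove this by a direct, bookkeeping-free argument using the permutation description \eqref{Def:Tau} together with the standard fact that $s_j(\tilde\omega_{2i}) = \tilde\omega_{2i}$ whenever $j \neq 2i$, i.e.\ the stabilizer of $\tilde\omega_{2i}$ in $\tilde W = \mathcal S_{2n}$ is exactly $\tilde W_{\tilde\omega_{2i}}$, the parabolic generated by all $s_j$ with $j \neq 2i$.

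First I would recall the shape of $\tau$ in \eqref{tau}: it is a product of $n-1$ descending ``runs'', the $r$-th run (counting from the right, $r = 1, \dots, n-1$) being $s_{r+1} s_{r+2} \cdots s_{n+r-1}$ — so the rightmost run is just $s_2$, the next is $s_3 s_4$, and the leftmost is $s_n s_{n+1} \cdots s_{2n-2}$. Now I apply $\tau$ to $\tilde\omega_{2i}$ reading from the right. A run $s_{r+1} s_{r+2} \cdots s_{n+r-1}$, when applied to the current weight, only ``sees'' the index $2i$: within a single run the reflections are applied in increasing order of index, so the first one that can act nontrivially is $s_{2i}$ (provided $2i$ lies in the range $[r+1, n+r-1]$ of that run), it moves the relevant weight around, and then one tracks which of the subsequent reflections in the run still act. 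The key observation — which is exactly what makes the calculation short — is that after $s_{2i}$ acts, the weight that was in ``slot $2i$'' has been pushed to slot $2i+1$, and then successively further down the run; so within the $r$-th run the reflections that act nontrivially are precisely a consecutive block $s_{2i}, s_{2i+1}, \dots$ up to the top of the run's range. Carrying this out run by run, and keeping track of how the ``active slot'' drifts as $r$ increases, collapses $\tau(\tilde\omega_{2i})$ to the claimed product $(s_n \cdots s_{n+i-1})\cdots(s_{i+1} \cdots s_{2i})$.

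Alternatively, and perhaps more cleanly, I would verify the identity at the level of permutations: both sides, as elements of $\mathcal S_{2n}$ acting on $\{1,\dots,2n\}$, must send $\{1,2,\dots,2i\}$ to the same $2i$-element subset (since $\tilde\omega_{2i} = \tilde\epsilon_1 + \dots + \tilde\epsilon_{2i}$ and its value under a permutation $w$ depends only on $w(\{1,\dots,2i\})$), and by Lemma~\ref{2.1} that subset is $\{1,\dots,i,n+1,\dots,n+i\}$. So it suffices to check that the right-hand product of reflections, applied to $\{1,\dots,2i\}$, yields $\{1,\dots,i\} \cup \{n+1,\dots,n+i\}$ — a short induction on $i$, or a direct computation of the action of each descending run on the relevant initial segment. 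One then notes that the right-hand expression is a reduced word when restricted to this coset (its length is $\sum_{r=1}^{i}(n-r) - (i-1) \cdot(\text{overlap count})$, which matches the Bruhat-theoretic expectation), but for the statement as written only the weight identity is needed.

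The main obstacle is purely organizational: making the ``active slot drifts down the run'' argument precise without drowning in indices, i.e.\ setting up the induction over runs $r = 2, \dots, n-1$ so that the inductive hypothesis cleanly records both the current weight and which slot carries the distinguished entry. I expect the permutation-set approach to be the safer route, since it replaces the nested reflection bookkeeping with a single statement about images of initial segments, which can be checked by a one-line induction using \eqref{Def:Tau}.
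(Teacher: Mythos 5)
The paper offers no proof of this lemma beyond the phrase ``A simple calculation shows,'' so you have supplied an argument where the authors left one implicit; your second route --- comparing the images of the subset $\{1,\dots,2i\}$ under the two permutations, since a Weyl group element's action on $\tilde\omega_{2i}=\tilde\epsilon_1+\cdots+\tilde\epsilon_{2i}$ is determined by where it sends this set --- is the correct, standard reduction, and it does work: by \eqref{Def:Tau} (equivalently Lemma~\ref{2.1}) the left side gives $\{1,\dots,i\}\cup\{n+1,\dots,n+i\}$, and a short induction over the $n-i$ blocks of the right-hand word (each of length $i$, starting indices running from $i+1$ up to $n$) yields the same set. This is surely what the authors mean by ``a simple calculation,'' so your plan and the paper's implicit proof agree.

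Two small slips in the (admittedly deprecated) ``route 1'' discussion are worth flagging so you don't build on them. First, your description of the run structure of $\tau$ from \eqref{tau} is off: the $r$-th run from the right is $s_{r+1}s_{r+2}\cdots s_{2r}$, of length $r$ (so length $1,2,\dots,n-1$), not $s_{r+1}\cdots s_{n+r-1}$ of uniform length $n-1$ as written --- your own examples ($s_2$, then $s_3 s_4$, then $s_4s_5s_6$) are correct and already contradict the displayed formula. Second, when a run $s_{r+1}\cdots s_{2r}$ acts on a weight, the rightmost factor $s_{2r}$ applies first, so within each run the simple reflections are used in \emph{decreasing} index order, not increasing as you state; the ``active slot'' drifts because a high-index reflection vacates a position that a subsequent lower-index reflection then fills. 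Neither slip affects the validity of route 2. You are also right that the parenthetical length count of the right-hand word is unnecessary for the weight identity; for the record it equals $i(n-i)$, which is indeed the length of the minimal representative of $\tau$ modulo the stabilizer of $\tilde\omega_{2i}$, consistent with Lemma~\ref{Lemma2.4}.
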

Let $L(i)$ be the semisimple part of the Levi subgroup of
$SL_{2n}(\mathbb{C})$ associated with the simple
roots $\tilde\alpha_{i+1},\tilde\alpha_{i+2},\ldots,\tilde\alpha_{i+n-1}$, denote by $\mathfrak{l}(i)$ the Lie algebra
of $L(i)$. Note that $L(i)$ is isomorphic to $SL_n(\mathbb C)$. Let $\varpi_1,\ldots,\varpi_{n-1}$ be the
fundamental weights of $L(i)$,
the enumeration is such that the simple root $\tilde\alpha_{i+j}$ of
$L(i)\subseteq SL_{2n}(\mathbb C)$ corresponds to $\varpi_{j}$.

The restriction of $\tilde\omega_{2i}$ to $L(i)$ is $\varpi_i$. Let $W^{L(i)}$
be the Weyl group of $L(i)$,
we can identify it with the subgroup of the Weyl group of
$SL_{2n}$ generated by the reflections
$s_{i+1},s_{i+2}\ldots, s_{i+n-1}$. Using Lemma~\ref{Lemma2.3}, it is easy to
see:
\begin{lem}\label{Lemma2.4}
$(s_n s_{n+1} \cdots s_{n+i-1})\cdots
(s_{i+2} \cdots  s_{2i+1})(s_{i+1} \cdots s_{2i-1} s_{2i})$ is a reduced
decomposition of the longest word of
$W^{L(i)}$ modulo the stabilizer $W^{L(i)}_{\varpi_i}$ of $\varpi_i$ in
$W^{L(i)}$.
\end{lem}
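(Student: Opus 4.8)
The plan is to reduce the statement to a well-known fact about the longest word in a quotient of the symmetric group, once we recognize that $W^{L(i)}$ is the symmetric group $\mathcal S_n$ acting on the indices $\{i+1,i+2,\ldots,i+n\}$ and that $\varpi_i$ corresponds to the fundamental weight $\tilde\omega_i$ of $SL_n$ whose stabilizer is the maximal parabolic $\mathcal S_i\times \mathcal S_{n-i}$. First I would fix the identification: the restriction of $\tilde\omega_{2i}$ to $L(i)$ is $\varpi_i$ (already noted in the excerpt), so $W^{L(i)}_{\varpi_i}$ is generated by all simple reflections $s_{i+1},\ldots,s_{i+n-1}$ except $s_{2i}$, i.e. it is the subgroup $\langle s_{i+1},\ldots,s_{2i-1}\rangle\times\langle s_{2i+1},\ldots,s_{i+n-1}\rangle$. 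The length of the minimal coset representative of the longest element of $W^{L(i)}$ modulo this parabolic is then $i(n-i)$, the number of positive roots of $L(i)$ sent to negative roots by that element, equivalently the number of inversions of the permutation that moves $\{i+1,\ldots,2i\}$ past $\{2i+1,\ldots,i+n\}$.

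Next I would count the length of the given word. The displayed decomposition
$(s_n s_{n+1} \cdots s_{n+i-1})\cdots(s_{i+2} \cdots s_{2i+1})(s_{i+1} \cdots s_{2i-1} s_{2i})$
is a product of $i$ blocks; reading the blocks from right to left, the $j$-th block (for $j=1,\ldots,i$) is $s_{i+j}s_{i+j+1}\cdots s_{2i+j-1}$, which has length $n-i$. Hence the total number of simple reflections appearing is $i(n-i)$, matching the length computed above. It therefore suffices to show two things: that the word is reduced, and that the element it represents is the minimal coset representative — equivalently, by the length count, that it lies in the correct coset, i.e. that applying it to $\varpi_i$ gives $w_0^{L(i)}\varpi_i$ (the lowest weight of the $L(i)$-representation with highest weight $\varpi_i$), and that it is $W^{L(i)}_{\varpi_i}$-minimal.

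For the coset membership I would simply compute the action on $\varpi_i$ directly: translating Lemma~\ref{Lemma2.3} (which gives $\tau(\tilde\omega_{2i})$ as exactly this word applied to $\tilde\omega_{2i}$) together with Lemma~\ref{2.1} (which identifies $\tau(v_0)=e_1\wedge\cdots\wedge e_i\wedge e_{n+1}\wedge\cdots\wedge e_{n+i}$, hence $\tau(\tilde\omega_{2i})=\tilde\epsilon_1+\cdots+\tilde\epsilon_i+\tilde\epsilon_{n+1}+\cdots+\tilde\epsilon_{n+i}$), the weight $\varpi_i$ supported on coordinates $\{i+1,\ldots,2i\}$ is sent to the weight supported on $\{i+1,\ldots,2i\}$ replaced by $\{2i+1,\ldots,i+n\}$, which is precisely the lowest weight of the $\varpi_i$-representation of $L(i)$. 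For reducedness and minimality, I would argue that a non-reduced expression for an element would give a word of length $>i(n-i)$, contradicting the count, provided we already know the element has length exactly $i(n-i)$; but the cleanest route is to invoke the standard combinatorial fact (e.g.\ via the subword/exchange characterization, or by checking that each left multiplication by the next block increases length, as the blocks are nested staircases) that the indicated staircase product is a reduced word for the minimal representative of $w_0^{\mathcal S_n}$ modulo $\mathcal S_i\times\mathcal S_{n-i}$. The main obstacle is bookkeeping: correctly matching the two enumerations of simple roots (the $s_\bullet$ of $SL_{2n}$ versus the $\varpi_\bullet$ of $L(i)\cong SL_n$) and verifying that the block structure really is a sequence of length-increasing steps rather than doing that verification by brute force; once the indexing is pinned down the statement is a direct translation of the corresponding fact for $\tau$ already used implicitly in Lemma~\ref{Lemma2.3}.
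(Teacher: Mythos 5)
The paper offers no written proof of this lemma (it simply asserts that it follows from Lemma~\ref{Lemma2.3}), so your plan is the one that matters, and its overall structure is correct: identify $W^{L(i)}$ with $\mathcal S_n$ on the indices $\{i+1,\ldots,i+n\}$, identify $W^{L(i)}_{\varpi_i}$ with the maximal parabolic $\mathcal S_i\times\mathcal S_{n-i}$ (missing the simple reflection $s_{2i}$), note that the minimal coset representative of $w_0^{L(i)}$ has length $i(n-i)$, then match that number against the number of letters in the displayed word and use the action on $\varpi_i$ to pin down the coset. One bookkeeping slip: the displayed word is a product of $n-i$ blocks, each of length $i$, not $i$ blocks of length $n-i$. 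Indeed the block formula you wrote, $s_{i+j}\cdots s_{2i+j-1}$, has $i$ letters and the starting index $i+j$ runs from $i+1$ up to $n$, so $j$ runs from $1$ to $n-i$; your phrase ``which has length $n-i$'' contradicts your own formula, and with $j$ ending at $i$ the leftmost block would be $s_{2i}\cdots s_{3i-1}$ rather than $s_n\cdots s_{n+i-1}$ unless $n=2i$. The total count $i(n-i)$ is unaffected. Finally, once you have (a) the word represents an element of the coset $w_0^{L(i)}W^{L(i)}_{\varpi_i}$ (from the weight computation via Lemmas~\ref{2.1} and~\ref{Lemma2.3}), (b) the minimal length in that coset is $i(n-i)$, and (c) the word has $i(n-i)$ letters, you are already done: the element has length at least $i(n-i)$ by (a),(b), at most $i(n-i)$ by (c), hence exactly $i(n-i)$, so the word is reduced and the element is the unique minimal representative. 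There is no need to separately invoke the ``standard combinatorial fact'' about staircase words.
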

\section{The fundamental representations - the $\mathfrak{sl}_n$-case}\label{SectionFundSL}
We switch now to Lie algebras and hyperalgebras over $\mathbb Z$. Fix a Chevalley basis for the Lie algebra
$\mathfrak{g}_{\mathbb{Z}}=\mathfrak{sl}_{n,{\mathbb{Z}}} \subset \mathfrak{sl}_{n,\mathbb C}$:
$$
\{f_{\alpha}, e_{\alpha}\ : \ \alpha\in\Phi^+;\ h_1, \ldots, h_{n-1}\},
$$
where $f_{\alpha}\in\mathfrak{g}_{\mathbb{Z},-\alpha}$,
$e_{\alpha}\in\mathfrak{g}_{\mathbb{Z}, \alpha}$ and
$h_i\in\mathfrak{h}_{\mathbb Z}$. For any $m\in\mathbb{Z}_{\geq 1}$, we define
the following elements in $U(\mathfrak{g})$:
\begin{equation}\label{Chevalley}
e_{\alpha}^{(m)}=\frac{e_{\alpha}^m}{m!}\qquad
f_{\alpha}^{(m)}=\frac{f_{\alpha}^m}{m!}\qquad
\binom{h_i}{m}=\frac{h_i(h_i-1)\ldots (h_i-m+1)}{m},
\end{equation}
for $m=0$ we set: $e_{\alpha}^{(0)}=f_{\alpha}^{(0)}=\binom{h_i}{0}=1$.
Recall that the hyperalgebra  $U_{\mathbb{Z}}(\mathfrak{sl}_n)$ of
$(SL_n)_{\mathbb Z}$ is the ${\mathbb Z}$-subalgebra of the complex enveloping algebra
$U(\mathfrak{sl}_n)$ generated by the
elements defined in \eqref{Chevalley}. We will use capital letters to denote the Chevalley basis elements for
$\mathfrak{sl}_{2n,{\mathbb{Z}}}$ (e.g. $E_{\tilde\alpha}$, $F_{\tilde\alpha}$, $H_i$) and
the generators of the hyperalgebra  $U_{\mathbb{Z}}(\mathfrak{sl}_{2n})$  (e.g.
$E_{\tilde \alpha}^{(m)}$, $F_{\tilde\alpha}^{(m)}$, $\binom{H_i}{m}$). Similarly, let $U_{\mathbb{Z}}(\tilde{\mathfrak{b}})$
be the subalgebra generated by all $E_{\tilde\alpha}^{(m)}$ for $m\ge 0$ and $\tilde\alpha>0$,
and all $\binom{H_i}{m}$, $i=1,\ldots,2n-1$, $m\ge 0$.  Denote by $U_{\mathbb{Z}}({\mathfrak{l}}(i))$
the hyperalgebra associated with $\mathfrak{l}(i)$, i.e. it is the subalgebra generated by all
$F_{\tilde\alpha}^{(m)}, E_{\tilde\alpha}^{(m)}$, $m\ge 0,\tilde\alpha>0$
and a root of the Levi subgroup $L(i)$, and by all $\binom{H_j}{m}$, $j=i+1,\ldots,i+n-1$.

Let $\mu$ be a dominant integral weight for $SL_{2n}(\mathbb C)$ and denote by $\tilde{V}(\mu)$
the irreducible $SL_{2n}(\mathbb{C})$-representation of highest weight $\mu$.
Fix a highest weight vector $v_{\mu}$,
the corresponding $\mathbb{Z}$-form is
$\tilde{V}_{\mathbb{Z}}(\mu)=U_{\mathbb{Z}}(\mathfrak{sl}_{2n})v_{\mu}$.
To define the Demazure module $\tilde V_{\mathbb{Z}}(\mu)_w$, fix a representative $\check w$
of $w$ in the simply connected Chevalley group associated with $\mathfrak{sl}_{2n,{\mathbb{Z}}}$
and set $v_{w}:=\check w(v_\mu)$.
The Demazure module $\tilde V_{\mathbb{Z}}(\lambda)_w$
is the cyclic $U_{\mathbb{Z}}(\tilde{\mathfrak{b}})$-subrepresentation
$U_{\mathbb{Z}}(\tilde{\mathfrak{b}}).v_{w}\subseteq  \tilde{V}_{\mathbb Z}(\mu)$.
\begin{lem}\label{coro2.7}
The Demazure module $\tilde V_{\mathbb{Z}}(\ell\tilde\omega_{2i})_\tau$
contained in
$\tilde V_{\mathbb{Z}}(\ell\tilde\omega_{2i})$ is the Weyl module
$V_{\mathbb{Z}}(\ell\varpi_i)$ of highest weight $\ell\varpi_i$ for $U_{\mathbb{Z}}({\mathfrak{l}}(i))$.
\end{lem}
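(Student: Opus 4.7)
The plan is to replace $\tau$ by a shorter element lying in the Weyl group $W^{L(i)}$ of the Levi $L(i)$, and thereby transfer the Demazure module computation to $L(i)$. Let $\tau_1 := (s_n s_{n+1}\cdots s_{n+i-1})\cdots(s_{i+1}s_{i+2}\cdots s_{2i})$ be the word appearing in Lemma~\ref{Lemma2.3}, which by Lemma~\ref{Lemma2.4} represents the longest element of $W^{L(i)}$ modulo the stabilizer of $\varpi_i$. Since $\tau_1(\ell\tilde\omega_{2i}) = \tau(\ell\tilde\omega_{2i})$ is an extremal weight, its weight space in $\tilde V_{\mathbb Z}(\ell\tilde\omega_{2i})$ is a rank-one free $\mathbb{Z}$-module, so the vectors $v_\tau$ and $v_{\tau_1} := \check\tau_1\cdot v_{\ell\tilde\omega_{2i}}$ differ only by a sign, and the Demazure module may be rewritten as $\tilde V_{\mathbb Z}(\ell\tilde\omega_{2i})_\tau = U_{\mathbb Z}(\tilde{\mathfrak b})\cdot v_{\tau_1}$.

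Next, I would introduce $V_{\mathbb Z}(\ell\varpi_i) := U_{\mathbb Z}(\mathfrak{l}(i))\cdot v_{\ell\tilde\omega_{2i}}$ and observe that, since $v_{\ell\tilde\omega_{2i}}$ is killed by every positive root vector of $\tilde{\mathfrak g}$ and hence by those of $\mathfrak{l}(i)$, this submodule is the $L(i)$-Weyl module of highest weight $\ell\varpi_i = \tilde\omega_{2i}|_{L(i)}$. Since $\tau_1$ represents $w_0^{L(i)}$ modulo the stabilizer of $\varpi_i$, the vector $v_{\tau_1}$ is a lowest weight vector of $V_{\mathbb Z}(\ell\varpi_i)$ for $L(i)$. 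Applied inside $L(i)$, the standard fact that a Weyl module coincides with its longest Demazure submodule yields $V_{\mathbb Z}(\ell\varpi_i) = U_{\mathbb Z}(\mathfrak{b}^{L(i)})\cdot v_{\tau_1}$, where $\mathfrak{b}^{L(i)}\subseteq \tilde{\mathfrak b}$ denotes the positive Borel of $\mathfrak{l}(i)$; this gives the inclusion $V_{\mathbb Z}(\ell\varpi_i)\subseteq U_{\mathbb Z}(\tilde{\mathfrak b})\cdot v_{\tau_1}$.

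The main point is the reverse inclusion, which amounts to showing that $E_{\tilde\alpha}^{(m)} v_{\tau_1} \in V_{\mathbb Z}(\ell\varpi_i)$ for every positive $\tilde{\mathfrak g}$-root $\tilde\alpha$ and every $m\ge 1$. When $\tilde\alpha$ is a root of $L(i)$ this is immediate, so the crux is to show $E_{\tilde\alpha}^{(m)} v_{\tau_1} = 0$ for $\tilde\alpha = \tilde\epsilon_p - \tilde\epsilon_q$ positive with $\{p,q\}\not\subseteq \{i+1,\ldots,i+n\}$. This is where the switch from $\tau$ to $\tau_1$ pays off: $\tau_1$ permutes only the indices in $\{i+1,\ldots,i+n\}$ and fixes the rest, so a short case analysis on whether each of $p,q$ lies in the permuted range shows that $\tau_1^{-1}(\tilde\alpha)$ is again a positive root. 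The vanishing then follows from the identity $E_{\tilde\alpha}^{(m)} v_{\tau_1} = \pm\check\tau_1\cdot E_{\tau_1^{-1}\tilde\alpha}^{(m)} v_{\ell\tilde\omega_{2i}}$ together with the fact that divided powers of positive root vectors annihilate the highest weight vector $v_{\ell\tilde\omega_{2i}}$. Combining the two inclusions gives the equality in the statement.
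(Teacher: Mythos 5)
Your proof is correct and follows essentially the same route as the paper: replace $\tau$ by the shorter representative $\tau_1$ via Lemmas~\ref{Lemma2.3} and~\ref{Lemma2.4}, observe that the $U_{\mathbb Z}(\mathfrak l(i))$-submodule generated by $v_{\ell\tilde\omega_{2i}}$ is the Weyl module, and show that acting by the full $\tilde{\mathfrak b}$ on $v_{\tau_1}$ yields nothing beyond the $\mathfrak b(i)$-action. The only cosmetic difference is that you re-derive the vanishing $E_{\tilde\alpha}^{(m)}v_{\tau_1}=0$ for non-$\mathfrak l(i)$ roots via the conjugation identity $\check\tau_1 E_{\tau_1^{-1}\tilde\alpha}^{(m)}\check\tau_1^{-1}$ together with the observation that $\tau_1$ only permutes $\{i+1,\dots,n+i\}$, whereas the paper simply cites the pairing computation of Lemma~\ref{2.4}; both arguments are equivalent, and both (like the paper) leave the ordered-PBW step implicit.
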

\begin{proof}
Consider $\Psi(\ell\omega_i)=\ell\tilde\omega_{2i}$ and recall that the
restriction of $\tilde\omega_{2i}$ to $\mathfrak{l}(i)$ is $\varpi_i$.
So the $U_{\mathbb{Z}}({\mathfrak{l}}(i))$-submodule $U_{\mathbb{Z}}({\mathfrak{l}}(i))v_{\ell\tilde\omega_{2i}}\subseteq \tilde
V_{\mathbb{Z}}(\ell\tilde\omega_{2i})$
is the Weyl module $V_{\mathbb{Z}}(\ell\varpi_i)$ of highest weight
$\ell\varpi_i$ for $U_{\mathbb{Z}}({\mathfrak{l}}(i))$.
Let $U_{\mathbb{Z}}({\mathfrak{b}}(i))$ be the subalgebra of $U_{\mathbb{Z}}({\mathfrak{l}}(i))$
generated by the  $E_{\tilde\alpha}^{(m)}$, $m\ge 0,\tilde\alpha>0$
and a root of ${\mathfrak{l}}(i)$, and all $\binom{H_j}{m}$, $j=i+1,\ldots,i+n-1$.

The Weyl module $V_{\mathbb{Z}}(\ell\varpi_i)$ is a cyclic
$U_{\mathbb{Z}}({\mathfrak{b}}(i))$-module and is generated by a lowest weight vector
of the form $\check w_{0,i}(v_{\ell\varpi_i})$, where $\check w_{0,i}$ is an appropriate representative (in the Chevalley group
associated with $\mathfrak{l}(i)$) of the longest element $w_{0,i}$ of the Weyl group $W^{L(i)}$ of $\mathfrak{l}(i)$. Recall that $W^{L(i)}$ can be identified with the
subgroup of $\tilde W$ generated by $s_{i+1},\ldots,s_{n+i-1}$.
Now 
$$
\tilde V_{\mathbb Z}(\ell\tilde\omega_{2i})_\tau=
U_{\mathbb{Z}}(\tilde{\mathfrak{b}}) v_{\tau}=
U_{\mathbb{Z}}(\tilde{\mathfrak{b}}) v_{w_{0,i}}=
U_{\mathbb{Z}}({\mathfrak{b}(i)})  v_{w_{0,i}}=
V_{\mathbb{Z}}(\ell\varpi_i)\subseteq  \tilde
V_{\mathbb{Z}}(\ell\tilde\omega_{2i})
$$
by Lemma~\ref{2.4}, Lemma~\ref{Lemma2.3} and Lemma~\ref{Lemma2.4}.
\end{proof}

The previous result implies in particular:
\begin{cor}\label{coro2.5}
$\text{rank\,}\tilde
V_{\mathbb{Z}}(\Psi(\ell\omega_{i}))_\tau=\text{rank\,}V_{\mathbb
Z}(\ell\omega_{i})$.
\end{cor}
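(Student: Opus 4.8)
The plan is to derive Corollary~\ref{coro2.5} as an immediate numerical consequence of Lemma~\ref{coro2.7}. First I would observe that, by Lemma~\ref{coro2.7}, the Demazure module $\tilde V_{\mathbb Z}(\ell\tilde\omega_{2i})_\tau$ is precisely the Weyl module $V_{\mathbb Z}(\ell\varpi_i)$ for the hyperalgebra $U_{\mathbb Z}(\mathfrak l(i))$. Since $\Psi(\ell\omega_i)=\ell\tilde\omega_{2i}$ by Definition~\ref{mappsi}, the left-hand side $\tilde V_{\mathbb Z}(\Psi(\ell\omega_i))_\tau$ in the statement is exactly this Demazure module, so its rank as a free $\mathbb Z$-module equals the rank of $V_{\mathbb Z}(\ell\varpi_i)$.

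Next I would identify the rank of $V_{\mathbb Z}(\ell\varpi_i)$ with the rank of $V_{\mathbb Z}(\ell\omega_i)$: the Levi subgroup $L(i)$ is isomorphic to $SL_n(\mathbb C)$ (this is recorded in the text just before Lemma~\ref{Lemma2.4}), and under that isomorphism the fundamental weight $\varpi_i$ of $L(i)$ corresponds to the fundamental weight $\omega_i$ of $SL_n$, since $\varpi_j$ was defined so that the simple root $\tilde\alpha_{i+j}$ of $L(i)$ corresponds to $\varpi_j$. Hence $V_{\mathbb Z}(\ell\varpi_i)$ and $V_{\mathbb Z}(\ell\omega_i)$ are Weyl modules for isomorphic split reductive $\mathbb Z$-groups attached to corresponding dominant weights, so they are isomorphic as $\mathbb Z$-modules and in particular have the same rank. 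Chaining the two equalities gives $\operatorname{rank}\tilde V_{\mathbb Z}(\Psi(\ell\omega_i))_\tau=\operatorname{rank}V_{\mathbb Z}(\ell\varpi_i)=\operatorname{rank}V_{\mathbb Z}(\ell\omega_i)$, which is the claim.

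There is essentially no obstacle here; the only point requiring a word of care is the identification of $\varpi_i$ with $\omega_i$ under the isomorphism $L(i)\cong SL_n$, which one should state explicitly rather than leave implicit, and the fact that Weyl modules for split Chevalley $\mathbb Z$-groups have a basis whose cardinality depends only on the abstract weight datum (Weyl character formula / Kostant $\mathbb Z$-form), so the rank is insensitive to the particular realization. I would phrase the proof in one or two sentences invoking Lemma~\ref{coro2.7} and this identification.
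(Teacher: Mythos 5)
Your proof is correct and is essentially the paper's own argument: the corollary is stated immediately after Lemma~\ref{coro2.7} with the phrase ``The previous result implies in particular,'' and the implicit reasoning is exactly the chain you spell out, namely that $\tilde V_{\mathbb Z}(\Psi(\ell\omega_i))_\tau\cong V_{\mathbb Z}(\ell\varpi_i)$ as a Weyl module for $U_{\mathbb Z}(\mathfrak l(i))$, together with the identification $L(i)\cong SL_n$ sending $\varpi_i$ to $\omega_i$. Your added care about the Kostant $\mathbb Z$-form and the independence of rank from realization is a reasonable elaboration of a point the paper leaves tacit.
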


Let $\iota: \mathfrak{sl}_n\rightarrow \mathfrak{sl}_n$ be the Chevalley involution
defined by $\iota\vert_\mathfrak{h}=-1$ and $\iota$ exchanges $e_\alpha$ and $-f_\alpha$.
It follows  that $\iota(\mathfrak{n}^-_{\mathbb Z})=\mathfrak{n}^+_{\mathbb Z}$,
and this map extends to an isomorphism of the corresponding hyperalgebras $\iota:U_{\mathbb Z}(\mathfrak{n}^-)
\rightarrow U_{\mathbb Z}(\mathfrak{n}^+)$ and the associated graded versions obtained via the PBW filtration:
$\iota:S^\bullet_{\mathbb Z}(\mathfrak{n}^-)
\rightarrow S^\bullet_{\mathbb Z}(\mathfrak{n}^+)$.

Let $\lambda=\sum a_j \omega_j$ be a dominant weight and set $\lambda^*:=\sum a_j \omega_{n-j}$.
Fix a highest weight vector $v_\lambda\in V_{\mathbb Z}(\lambda)$
and a lowest weight vector $v_{w_0}\in V_{\mathbb Z}(\lambda)$, where $w_0$ is the longest word in the Weyl group
of $\mathfrak{sl}_n$. We get two possible $S^\bullet_{\mathbb Z}(\mathfrak{n}^{-,a})$-structures
on $V_{\mathbb Z}(\lambda)$: one uses the PBW filtration on
$U_{\mathbb Z}(\mathfrak{n}^{-})$ to induce, via the highest weight vector, a PBW filtration
on $V_{\mathbb Z}(\lambda)$ and passes to the associated graded module. One gets the module $V^a_{\mathbb Z}(\lambda)$
discussed before. Now one can do the same also for $U_{\mathbb Z}(\mathfrak{n}^{+})$, once replaced  the  highest weight vector by the lowest weight vector. We denote the cyclic $S^\bullet_{\mathbb Z}(\mathfrak{n}^+)$-module
(generated by the lowest weight vector) by $V^{a,+}_{\mathbb Z}(\lambda)$. Now via $\iota$ this module
also becomes naturally a $S^\bullet_{\mathbb Z}(\mathfrak{n}^-)$-module.

\begin{lem}\label{Chevalleytwist}
The  $S^\bullet_{\mathbb Z}(\mathfrak{n}^-)$-module $V^{a,+}_{\mathbb Z}(\lambda)$ is, as
$S^\bullet_{\mathbb Z}(\mathfrak{n}^-)$-module, isomorphic to $V^{a}_{\mathbb Z}(\lambda^*)$.
\end{lem}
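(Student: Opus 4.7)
The plan is to use the Chevalley involution $\iota$ to transport the lowest-weight based construction of $V^{a,+}_{\mathbb{Z}}(\lambda)$ onto the highest-weight based construction of $V^a_{\mathbb{Z}}(\lambda^*)$. I would first observe that $\iota$ extends canonically to an involutive automorphism of the hyperalgebra $U_{\mathbb{Z}}(\mathfrak{sl}_n)$, with
$$
\iota(e_\alpha^{(m)})=(-1)^m f_\alpha^{(m)},\quad \iota(f_\alpha^{(m)})=(-1)^m e_\alpha^{(m)},\quad \iota\bigl(\tbinom{h_i}{m}\bigr)=\tbinom{-h_i}{m}.
$$
In particular $\iota$ stabilises the Kostant $\mathbb{Z}$-form and preserves PBW degrees.

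Next, for any $U_{\mathbb{Z}}(\mathfrak{sl}_n)$-module $M$, let $M^\iota$ denote the twisted module obtained by setting $x\cdot_\iota m:=\iota(x) m$. I would apply this to $M=V_{\mathbb{Z}}(\lambda)$ and study the lowest weight vector $v_{w_0}$, which has weight $w_0\lambda=-\lambda^*$. The identity $e_\alpha^{(m)}\cdot_\iota v_{w_0}=(-1)^m f_\alpha^{(m)} v_{w_0}=0$ (for $m\geq 1$) together with the twisted torus action shows that $v_{w_0}$ is a highest weight vector of weight $\lambda^*$ in $M^\iota$. Since $v_{w_0}$ generates $V_{\mathbb{Z}}(\lambda)$ under the untwisted action of $U_{\mathbb{Z}}(\mathfrak{n}^+)$, it generates $M^\iota$ under the twisted action of $U_{\mathbb{Z}}(\mathfrak{n}^-)$. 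A rank/character comparison then identifies $M^\iota$ with the Weyl module $V_{\mathbb{Z}}(\lambda^*)$ via an isomorphism $\Phi$ sending $v_{w_0}$ to the highest weight vector $v_{\lambda^*}$.

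Because $\iota$ preserves PBW degree, $\Phi$ identifies the PBW filtration on $V_{\mathbb{Z}}(\lambda)$ coming from $U_{\mathbb{Z}}(\mathfrak{n}^+)\cdot v_{w_0}$ with the PBW filtration on $V_{\mathbb{Z}}(\lambda^*)$ coming from $U_{\mathbb{Z}}(\mathfrak{n}^-)\cdot v_{\lambda^*}$; passing to associated graded modules yields the claimed isomorphism. One then checks that the resulting $S^\bullet_{\mathbb{Z}}(\mathfrak{n}^-)$-action on the left hand side is exactly the one defined in the statement, i.e.\ the precomposition of the natural $S^\bullet_{\mathbb{Z}}(\mathfrak{n}^+)$-action with $\iota$: the sign $(-1)^m$ appearing in $\iota(f_\alpha^{(m)})$ is absorbed into the sign introduced when computing $f_\alpha^{(m)}\cdot_\iota v_{w_0}=(-1)^m e_\alpha^{(m)}v_{w_0}$, so everything matches. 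I do not expect any serious obstacle; the only delicate point is the $\mathbb{Z}$-integrality of $\iota$ and its compatibility with divided powers, which is immediate from the formulas on Chevalley generators.
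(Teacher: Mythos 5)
Your proof is correct and follows essentially the same route as the paper's: twist by the Chevalley involution, observe that the lowest weight vector becomes a highest weight vector of weight $\lambda^*=-w_0\lambda$, and use compatibility of $\iota$ with the PBW filtrations on $U_{\mathbb{Z}}(\mathfrak{n}^+)$ and $U_{\mathbb{Z}}(\mathfrak{n}^-)$ to pass to the associated graded modules. You spell out the divided-power formulas and $\mathbb{Z}$-integrality of $\iota$, which the paper leaves implicit, but the underlying argument is the same.
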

\begin{proof}
Note that twisting the representation map with the Chevalley involution makes the lowest weight vector
(the cyclic generator for the $U(\mathfrak n^+)$-action) into a cyclic generator for the $U(\mathfrak n^-)$-action.
Recall that the Chevalley involution is equal to $-1$ on $\mathfrak h$, so after the twist this is now a highest weight
vector of weight $\lambda^*=-w_0(\lambda)$, where $w_0$ is the longest word in $W$. Since the construction is
compatible with the PBW filtrations with respect to the two algebras, the result for the associated graded modules
follows immediately.
\end{proof}
\begin{prop}\label{proposition2.7}
As $S_{\mathbb Z}(\mathfrak n^{-,a})$-modules, $\tilde
V_{\mathbb{Z}}(\Psi(\ell\omega_{i}))_\tau\simeq
V_{\mathbb{Z}}^a(\ell\omega_{n-i})$.
\end{prop}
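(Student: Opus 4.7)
The plan is to combine Lemma~\ref{coro2.7} with Lemma~\ref{Chevalleytwist}, passing through an intermediate realisation of the Demazure module as a cyclic module over an abelian subalgebra of $\mathfrak l(i)$. First, Lemma~\ref{coro2.7} identifies $\tilde V_{\mathbb Z}(\ell\tilde\omega_{2i})_\tau$ with the Weyl module $V_{\mathbb Z}(\ell\varpi_i)$ for $U_{\mathbb Z}(\mathfrak l(i))$, sending $v_\tau$ to a lowest weight vector; in particular $v_\tau$ is cyclic under $U_{\mathbb Z}(\mathfrak n^+(i))$.

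Next I would show that the $S^\bullet_{\mathbb Z}(\mathfrak n^{-,a})$-action on the Demazure module (coming from $\mathfrak n^{-,a}\hookrightarrow\tilde{\mathfrak b}$) is concentrated on the subspace $\mathfrak n^{-,a}_{L(i)}:=\mathfrak n^{-,a}\cap\mathfrak l(i)$, which collects the root spaces $\mathfrak n^{-,a}_{\phi(\alpha)}$ for $\alpha=\epsilon_a-\epsilon_b$ with $a\le i<b$. The key claim is that whenever $\phi(\alpha)\notin\mathfrak l(i)$ (i.e.\ $b\le i$ or $a>i$), the divided powers $E_{\phi(\alpha)}^{(m)}$ annihilate $v_\tau$ for every $m\ge 1$. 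A direct calculation in the wedge realisation of Lemma~\ref{2.1} already gives $E_{\phi(\alpha)}v_\tau=0$: the operator acts as a derivation sending $e_{n+a}\mapsto e_b$, and either $e_b$ already appears in $v_\tau$ (case $b\le i$) or $e_{n+a}$ does not (case $a>i$). The $\mathbb Z$-freeness of Weyl modules upgrades this to all divided powers, and the commutativity of $\mathfrak n^{-,a}$ then propagates the vanishing to the entire cyclic submodule generated by $v_\tau$.

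Third, I would transport $\mathfrak n^{-,a}_{L(i)}$ through $L(i)\simeq SL_n$, recognising it as the abelian nilradical (inside $\mathfrak n^+$) of the maximal parabolic attached to $\varpi_{n-i}$. Since the Levi of this parabolic annihilates the lowest weight vector of $V(\ell\varpi_i)$, the parabolic decomposition of $U(\mathfrak n^+(i))$, lifted to the hyperalgebra, collapses to $S^\bullet(\mathfrak n^{-,a}_{L(i)})\cdot v_\tau=V_{\mathbb Z}(\ell\varpi_i)$. Hence the cyclic $S^\bullet(\mathfrak n^{-,a})$-module generated by $v_\tau$ exhausts the whole Demazure module and, via $L(i)\simeq SL_n$, is identified with $V_{\mathbb Z}^{a,+}(\ell\omega_i)$ in the sense of the paragraph preceding Lemma~\ref{Chevalleytwist}. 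An application of Lemma~\ref{Chevalleytwist} then yields $V_{\mathbb Z}^{a,+}(\ell\omega_i)\simeq V_{\mathbb Z}^a((\ell\omega_i)^*)=V_{\mathbb Z}^a(\ell\omega_{n-i})$, which is the statement.

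The principal obstacle is the third step: one must ensure that the parabolic decomposition and the annihilation of $v_\tau$ by the Levi's positive-root divided powers survive integrally, and that the chain of identifications---the embedding into $\tilde{\mathfrak b}$, the isomorphism $L(i)\simeq SL_n$, and the Chevalley twist built into Lemma~\ref{Chevalleytwist}---composes coherently into a single $S^\bullet(\mathfrak n^{-,a})$-module isomorphism.
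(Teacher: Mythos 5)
Your proposal follows essentially the same route as the paper: identify the Demazure module with the Weyl module $V_{\mathbb Z}(\ell\varpi_i)$ via Lemma~\ref{coro2.7}, recognise that the relevant $S^\bullet(\mathfrak n^{-,a})$-action factors through $\mathfrak n^{-,a}\cap\mathfrak l(i)$ (the abelian nilradical of the $\varpi_{n-i}$-parabolic), realise this as $V^{a,+}_{\mathbb Z}(\ell\omega_i)$, and finish with the Chevalley twist of Lemma~\ref{Chevalleytwist}. Your route to exhaustion (Levi killing the lowest weight vector, then PBW) is a mild and perfectly acceptable variant of the paper's appeal to Lemma~\ref{2.4}, and your wedge computation for the vanishing of the root operators on $v_\tau$, upgraded to divided powers by $\mathbb Z$-freeness, is correct.

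There is, however, a genuine gap in your third step, and it is not the one you flag as the ``principal obstacle.'' What you obtain from the Demazure module is the \emph{ungraded} Weyl module $V_{\mathbb Z}(\ell\varpi_i)$, viewed as a cyclic $S^\bullet(\mathfrak n^{-,a}_{L(i)})$-module generated by $v_\tau$. But $V^{a,+}_{\mathbb Z}(\ell\omega_i)$ is by definition the \emph{associated graded} of $V_{\mathbb Z}(\ell\omega_i)$ with respect to the PBW filtration of $U_{\mathbb Z}(\mathfrak n^+)$ applied to the lowest weight vector. Your sentence ``is identified with $V_{\mathbb Z}^{a,+}(\ell\omega_i)$'' silently equates a module with its associated graded, and this is exactly the content that has to be proved; it is false for a general dominant weight (indeed, the whole point of the degenerate modules is that $V^a(\lambda)\not\cong V(\lambda)$ in general). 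The missing input is that for the fundamental weight the relevant filtration is already a grading. Concretely: since $\varpi_{n-i}$ is cominuscule, the degree filtration on $S^\bullet(\mathfrak n^{-,a}_{L(i)})$ is recovered by pairing weights with $\varpi_{n-i}^\vee$, so the annihilator of $v_\tau$ is automatically a homogeneous ideal, the PBW filtration on $V_{\mathbb Z}(\ell\varpi_i)$ is a grading, and $\text{gr}\,V_{\mathbb Z}(\ell\varpi_i)\cong V_{\mathbb Z}(\ell\varpi_i)$ as $S^\bullet(\mathfrak n^{-,a}_{L(i)})$-modules. This is precisely the observation the paper makes (``Since $\varpi_i,\varpi_{n-i}$ are cominuscule \dots the PBW filtration on $V_{\mathbb{Z}}(\ell\varpi_i)$ is already a grading''), and without it the chain of identifications does not close. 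Your final paragraph worries about integrality and coherence of the identifications, which are indeed worth checking but are the lesser concern here; the cominuscule grading argument is the step you must supply.
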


\begin{proof}
Let $\mathfrak{n}_i^{-,a}\subseteq \mathfrak{n}^{-,a}$ be the sum of all root subspaces of roots
of the form $\tilde\epsilon_k-\tilde\epsilon_{n+\ell}$, where
$1\le\ell \le i \le k \le n$ and $\ell\not=k$. 
This is a commutative Lie subalgebra, which by Lemma~\ref{2.4} has the following property:
$$
V_{\mathbb{Z}}(\Psi(\ell\omega_{i}))_\tau=U(\tilde{\mathfrak{b}}).v_\tau=U(\mathfrak{n}_i^{-,a}).v_\tau=S^\bullet(\mathfrak{n}_i^{-,a}).v_\tau.
$$
Since $\mathfrak{n}^{-,a}$ is commutative, all root vectors in $\mathfrak{n}^{-,a}$ which are not in $\mathfrak{n}_i^{-,a}$ act trivially on $V_{\mathbb{Z}}(\Psi(\ell\omega_{i}))_\tau$.

Another way to describe $\mathfrak{n}_i^{-,a}$ is as the intersection $\mathfrak{n}^{-,a}\cap \mathfrak{l}(i)$. More precisely, this intersection
is the nilpotent radical of the maximal parabolic subalgebra of $\mathfrak{l}(i)$ associated with the fundamental weight $\varpi_{n-i}$.
By Lemma~\ref{coro2.7} we know that $V_{\mathbb{Z}}(\Psi(\ell\omega_{i}))_\tau= U(\mathfrak{b}(i)).v_\tau\simeq V_{\mathbb{Z}}(\ell\varpi_i)$,
since $v_\tau$ is a lowest weight vector we get in addition
$$
V_{\mathbb{Z}}(\Psi(\ell\omega_{i}))_\tau= U(\mathfrak{n}_i^{-,a}).v_\tau\simeq V_{\mathbb{Z}}(\ell\varpi_i).
$$
Set $\mathfrak n^+(i)= \mathfrak b(i)\cap \tilde{\mathfrak n}^+$, by the isomorphism between $\mathfrak{l}(i)$ and $\mathfrak{sl}_n$ we can
identify $\mathfrak m$ with ${\mathfrak n}^+\subset \mathfrak{sl}_n$.
Consider the associated PBW filtration on $V_{\mathbb{Z}}(\ell\varpi_i)$
by applying the PBW filtration of $U(\mathfrak n^+(i))$ to the lowest weight vector.
Recall that after passing to the associated graded algebra $S^\bullet(\mathfrak n^+(i))$ all root vectors not contained in $\mathfrak{n}_i^{-,a}$
act trivially on $V^{a,+}_{\mathbb{Z}}(\ell\varpi_i)$. Remember that we add a ``$+$" to indicate that this is the associated graded space with respect to
the filtration by the nilpotent radical of the fixed Borel subalgebra and not, as usual, of the opposite nilpotent algebra.
Since $\varpi_i,\varpi_{n-i}$ are cominuscule, $\mathfrak{n}_i^{-,a}$ is commutative
and the PBW filtration on $V_{\mathbb{Z}}(\ell\varpi_i)$ is already a grading. It follows that the action of $\mathfrak{n}_i^{-,a}$ on
$V_{\mathbb{Z}}(\ell\varpi_i)$ and $V^{a,+}_{\mathbb{Z}}(\ell\varpi_i)$ are the same and hence the $\mathfrak{n}_i^{-,a}$
action on $V^{a,+}_{\mathbb{Z}}(\ell\varpi_i)$ and $V_{\mathbb{Z}}(\Psi(\ell\omega_{i}))_\tau$ are isomorphic, and hence so
are the $\mathfrak{n}^{-,a}$ actions by trivial extension. The proposition follows now by Lemma~\ref{Chevalleytwist}.
\end{proof}

\section{The general case for $\mathfrak{sl}_n$}
\subsection{\,} 
We extend Proposition~\ref{proposition2.7} to any dominant weight for $\mathfrak{sl}_n$. 
Recall that for a dominant weight $\lambda=a_1\omega_1+\cdots +a_{n-1}\omega_{n-1}$ we denote by $\lambda^*$ the dominant weight given by $\lambda^*=a_{n-1}\omega_1+\cdots +a_{1}\omega_{n-1}$.
\begin{thm}\label{ThmIsoSLn} Let $\lambda$ be a dominant $\mathfrak{sl}_{n}$-weight. The
Demazure submodule
$\tilde V_{\mathbb Z}(\Psi(\lambda^*))_\tau$ of the
$(\mathfrak{sl}_{2n})_\mathbb{Z}$-module $\tilde V_{\mathbb Z}(\Psi(\lambda^*))$ is,
as an $\mathfrak{n}_{\mathbb{Z}}^{-,a}$-module, isomorphic to $V_{\mathbb Z}^a(\lambda)$.
\end{thm}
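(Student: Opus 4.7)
The plan is to bootstrap from Proposition~\ref{proposition2.7} (the fundamental-weight case) by means of Cartan-type tensor product embeddings applied on both sides of the claimed isomorphism. Writing $\Psi(\lambda^*)=\sum_j a_j\tilde\omega_{2(n-j)}$, Proposition~\ref{proposition2.7} identifies $\tilde V_{\mathbb Z}(a_j\tilde\omega_{2(n-j)})_\tau\cong V^a_{\mathbb Z}(a_j\omega_j)$ as $S^\bullet(\mathfrak n^{-,a})$-modules, with $v_\tau$ corresponding to $v_{a_j\omega_j}$ in each factor.

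On the Demazure side, the integral Cartan embedding for Demazure modules (valid by projective normality of Schubert varieties, in the spirit of Mathieu/Ramanan--Ramanathan) provides a $U_{\mathbb Z}(\tilde{\mathfrak b})$-equivariant injection
$$\tilde V_{\mathbb Z}(\Psi(\lambda^*))_\tau \hookrightarrow \bigotimes_{j}\tilde V_{\mathbb Z}(a_j\tilde\omega_{2(n-j)})_\tau\cong \bigotimes_j V^a_{\mathbb Z}(a_j\omega_j),\qquad v_\tau\longmapsto \otimes v_{a_j\omega_j}.$$
By Lemma~\ref{2.4}(iii), every positive root vector of $\tilde{\mathfrak g}$ outside $\mathfrak n^{-,a}$ annihilates $v_\tau$ in each Demazure factor, so via the primitive coproduct the subalgebra $U_{\mathbb Z}(\tilde{\mathfrak n}^+)$ acts on $\otimes v_\tau$ through its quotient $S^\bullet(\mathfrak n^{-,a})$, and the image coincides with the cyclic $S^\bullet(\mathfrak n^{-,a})$-submodule generated by $\otimes v_{a_j\omega_j}$. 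On the PBW side, the Cartan embedding of Weyl modules $V_{\mathbb Z}(\lambda)\hookrightarrow\bigotimes_j V_{\mathbb Z}(a_j\omega_j)$, $v_\lambda\mapsto \otimes v_{a_j\omega_j}$, is filtered with respect to the PBW filtrations; passing to the associated graded produces an $S^\bullet(\mathfrak n^{-,a})$-equivariant map $V^a_{\mathbb Z}(\lambda)\to \bigotimes_j V^a_{\mathbb Z}(a_j\omega_j)$ whose image, by cyclicity of $V^a_{\mathbb Z}(\lambda)$, is again contained in this same cyclic submodule. Composing these two constructions yields a surjective $S^\bullet(\mathfrak n^{-,a})$-equivariant map
$$\pi\colon V^a_{\mathbb Z}(\lambda)\twoheadrightarrow \tilde V_{\mathbb Z}(\Psi(\lambda^*))_\tau.$$

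To promote $\pi$ to an isomorphism I would argue by ranks. Both modules are free over $\mathbb Z$ (the Demazure module by standard integrality; $V^a_{\mathbb Z}(\lambda)$ via the integral PBW-basis considerations of \cite{FFL3}), and after base change to $\mathbb Q$ their ranks equal $\dim V_{\mathbb C}(\lambda)$ and $\dim \tilde V_{\mathbb C}(\Psi(\lambda^*))_\tau$ respectively; equality of these two dimensions is precisely the characteristic-zero version of the theorem, already established in \cite{CL} combined with \cite{FFL1}. Since $\pi$ is a surjection of free $\mathbb Z$-modules of equal rank, it is forced to be an isomorphism. The main technical obstacle I expect is precisely this integrality step: controlling the PBW filtration on $V_{\mathbb Z}(\lambda)$ well enough over $\mathbb Z$ to guarantee freeness of the associated graded and compatibility of the abelianized action with the tensor product construction -- the essential integral PBW-type input to be imported from \cite{FFL3}.
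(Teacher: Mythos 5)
Your construction of the surjection $\pi\colon V^a_{\mathbb Z}(\lambda)\twoheadrightarrow \tilde V_{\mathbb Z}(\Psi(\lambda^*))_\tau$ via Cartan components of tensor products is essentially identical to the map $a=c\circ b$ in the paper's strategy diagram (Section~\ref{strategy}), including the use of Proposition~\ref{proposition2.7} for the fundamental-weight factors, the integral Cartan embedding of the Demazure module into the tensor product, and the PBW-compatibility of the Cartan embedding of Weyl modules. Where you diverge is in promoting $\pi$ to an isomorphism. You argue by rank: you import the freeness of $V^a_{\mathbb Z}(\lambda)$ from \cite{FFL3} and the equality of characteristic-zero dimensions from \cite{CL} (together with \cite{FFL1}), and conclude that a surjection of free $\mathbb Z$-modules of equal finite rank is an isomorphism. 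The paper instead works entirely on the level of annihilator ideals in $S^\bullet_{\mathbb Z}(\mathfrak n^{-,a})$: it identifies $\tilde V_{\mathbb Z}(\Psi(\lambda^*))_\tau\cong S^\bullet_{\mathbb Z}(\mathfrak n^{-,a})/M_{\mathbb Z}(\lambda^*)$ via Mathieu's presentation of Demazure modules (\cite[Lemme 26]{M}), extracts explicit generators of $M_{\mathbb Z}(\lambda^*)$ (the key technical step being Lemma~\ref{deltabandb}, replacing $\mathfrak b^1\oplus\mathfrak b^2$ by its diagonal), and then shows $M_{\mathbb Z}(\lambda^*)\subseteq I_{\mathbb Z}(\lambda^*)$, which gives the inverse surjection $f$ directly.

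Both arguments are valid, but the trade-offs are real. Your rank argument is shorter, but it makes the theorem logically dependent on the complex-coefficient result of \cite{CL} and on the integral PBW-basis machinery of \cite{FFL3}. This runs against the paper's declared aim (see the introduction) of giving a proof that bypasses \cite{CL}'s linear-algebraic description and is self-contained enough to rederive both ingredients: in the paper, $\mathbb Z$-freeness of $V^a_{\mathbb Z}(\lambda)$ is Corollary~\ref{abelfree}, obtained \emph{from} the theorem rather than assumed, and the rank equality is never invoked. The ideal-comparison route also pays an extra dividend that the rank argument cannot give for free: it produces the explicit generators-and-relations description $V^a_{\mathbb Z}(\lambda)\cong S^\bullet_{\mathbb Z}(\mathfrak n^{-,a})/I_{\mathbb Z}(\lambda)$ with $I_{\mathbb Z}(\lambda)$ generated by $U_{\mathbb Z}(\mathfrak n^+)\circ f_\alpha^{(\langle\alpha^\vee,\lambda\rangle+m)}$, stated as a corollary. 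So your proof is a legitimate alternative, but it proves less and leans on more; if your goal is only the module isomorphism and you are happy to quote \cite{CL} and \cite{FFL3}, it works, but it does not recover the presentation of the abelianized module nor the characteristic-free self-containedness that motivates the paper.
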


The proof of Theorem \ref{ThmIsoSLn} will be given in Section \ref{PROOF} (the strategy of proof is explained in Section \ref{strategy}) . We deduce a useful corollary.

\begin{cor}\label{abelfree}
In particular, $V_{\mathbb Z}^a(\lambda)$ is free as a $\mathbb Z$-module.
\end{cor}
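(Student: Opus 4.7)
The plan is to deduce this directly from Theorem~\ref{ThmIsoSLn} by transporting the question through the isomorphism to a statement about Demazure modules, where freeness is a standard consequence of the setup over $\mathbb Z$.

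First, I would note that the isomorphism of Theorem~\ref{ThmIsoSLn} is in particular an isomorphism of underlying $\mathbb Z$-modules, so it suffices to prove that the Demazure submodule $\tilde V_{\mathbb Z}(\Psi(\lambda^*))_\tau \subseteq \tilde V_{\mathbb Z}(\Psi(\lambda^*))$ is free as a $\mathbb Z$-module. The ambient $\mathbb Z$-form $\tilde V_{\mathbb Z}(\Psi(\lambda^*)) = U_{\mathbb Z}(\mathfrak{sl}_{2n})\cdot v_{\Psi(\lambda^*)}$ is the Kostant $\mathbb Z$-form of the irreducible complex module $\tilde V(\Psi(\lambda^*))$, and it is a classical result (Kostant) that this admissible lattice is a finitely generated free $\mathbb Z$-module; it sits inside the finite-dimensional $\mathbb C$-vector space $\tilde V(\Psi(\lambda^*))$ and has a $\mathbb Z$-basis compatible with the weight space decomposition.

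Next, by its very definition recalled in the excerpt, the Demazure module $\tilde V_{\mathbb Z}(\Psi(\lambda^*))_\tau = U_{\mathbb Z}(\tilde{\mathfrak b})\cdot v_\tau$ is a $\mathbb Z$-submodule of $\tilde V_{\mathbb Z}(\Psi(\lambda^*))$. Since $\mathbb Z$ is a principal ideal domain and any submodule of a finitely generated free module over a PID is itself free of finite rank, we conclude that $\tilde V_{\mathbb Z}(\Psi(\lambda^*))_\tau$ is a free $\mathbb Z$-module.

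Combining these two observations with the isomorphism $V_{\mathbb Z}^a(\lambda)\simeq \tilde V_{\mathbb Z}(\Psi(\lambda^*))_\tau$ furnished by Theorem~\ref{ThmIsoSLn} yields the freeness of $V_{\mathbb Z}^a(\lambda)$. There is no serious obstacle here: once Theorem~\ref{ThmIsoSLn} is in hand, the corollary is essentially immediate, the only content being the trivial invocation of the PID property of $\mathbb Z$. What makes the statement nontrivial a priori is that $V_{\mathbb Z}^a(\lambda)$ is defined as the associated graded of a filtration on a cyclic module, a construction for which freeness is not automatic; the point is precisely that its realization as a Demazure module sidesteps this issue.
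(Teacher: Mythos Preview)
Your proof is correct and follows the same approach as the paper: reduce via Theorem~\ref{ThmIsoSLn} to the freeness of the Demazure module inside the free $\mathbb Z$-module $\tilde V_{\mathbb Z}(\Psi(\lambda^*))$. The only cosmetic difference is that the paper observes the Demazure module is in fact a direct summand (being a sum of full weight spaces), whereas you invoke the PID property for submodules; either route is immediate.
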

\begin{proof} ({\it of the corollary})
The Demazure module $\tilde V_{\mathbb Z}(\Psi(\lambda^*))_\tau$ is a direct
summand
of the free $\mathbb Z$-module $\tilde V_{\mathbb Z}(\Psi(\lambda^*))$ and hence
free
as $\mathbb Z$-module.
\end{proof}
\subsection{\,}\label{borelaction} The abelianized module $V_{\mathbb{Z}}^a(\lambda)$ is a cyclic module over the
algebra $S_{\mathbb Z}^\bullet(\mathfrak n^{-,a})$
having as a generator the image of a highest weight vector $v_{\lambda}\in V(\lambda)$ in
$V_{\mathbb Z}^a(\lambda)$. 
Hence the module is isomorphic to $S_{\mathbb Z}^\bullet(\mathfrak
n^{-,a})/I_\mathbb{Z}(\lambda)$ where $I_\mathbb{Z}(\lambda)$
is the annihilator of $v_{\lambda}$ in $S_{\mathbb Z}^\bullet(\mathfrak n^{-,a})$.

We have an additional Lie algebra acting on $S_{\mathbb Z}^\bullet(\mathfrak n^{-,a})$
as well as on $V_{\mathbb{Z}}^a(\lambda)$.
Let $\mathfrak b$ be the Borel subalgebra of
$\mathfrak{g}=(\mathfrak{sl}_n)_{\mathbb Z}\otimes\mathbb C$ as in Example
\ref{example1},
so $\mathfrak{g}=\mathfrak n^-\oplus\mathfrak h \oplus\mathfrak n^+$.
As free $\mathbb{Z}$-modules,
$U_{\mathbb{Z}}(\mathfrak n^-)\simeq U_{\mathbb{Z}}(\mathfrak
g)/U^+_{\mathbb{Z}}(\mathfrak h+\mathfrak n^+)$, so that
the adjoint action of $U_{\mathbb{Z}}(\mathfrak b)$ on $U_{\mathbb{Z}}(\mathfrak
g)$ induces a structure of $U_{\mathbb{Z}}(\mathfrak b)$-module on
$U_{\mathbb{Z}}(\mathfrak n^-)$ and hence on
$S_{\mathbb Z}^\bullet(\mathfrak n^{-,a})$. This action is compatible with the induced
$U_{\mathbb{Z}}(\mathfrak b)$-action on $V_{\mathbb Z}^a(\lambda)$ (cf.\cite[Prop.
2.3.]{FFL3}). Recall that for a positive root $\alpha$ we have denoted by
$f_\alpha$ the corresponding fixed Chevalley basis element in
$(\mathfrak{sl}_n)_{-\alpha, \mathbb{Z}}$.
Using the presentation of Demazure modules in terms of generators and relations by Joseph, Mathieu and Polo
(cf. \cite{M}, Lemme 26),
we get as a consequence of the proof of Theorem~\ref{ThmIsoSLn}  the following
description of the ideal  $I_\mathbb{Z}(\lambda)$ (cf. \cite{FFL1}, \cite{FFL3}):
\begin{cor}
The abelianized module $V_{\mathbb Z}^a(\lambda)$ is  isomorphic to
$S_{\mathbb Z}^\bullet(\mathfrak n^{-,a})/I_\mathbb{Z}(\lambda)$ (as a cyclic $S_{\mathbb
Z}(\mathfrak n^{-,a})$-module), where
$$
I_\mathbb{Z}(\lambda)= S_{\mathbb Z}^\bullet(\mathfrak n^{-,a})( U_{\mathbb Z}(\mathfrak
n^+)\circ
\text{span}\{f_\alpha^{(\langle\alpha^\vee,\lambda\rangle+m)}; m\ge 1, \alpha >
0\})\subseteq S_{\mathbb Z}(\mathfrak n^{-,a}).
$$\end{cor}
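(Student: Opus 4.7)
The approach is to combine Theorem~\ref{ThmIsoSLn} with the presentation of Demazure modules due to Joseph, Mathieu and Polo (cited from \cite{M}, Lemme~26) and to transport the resulting relations back to $V^a_{\mathbb Z}(\lambda)$.

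By JMP, the Demazure module $\tilde V_{\mathbb Z}(\Psi(\lambda^*))_\tau$ is, as a cyclic $U_{\mathbb Z}(\tilde{\mathfrak b})$-module, generated by $v_\tau$ subject to the weight condition and the relations $E_{\tilde\beta}^{(N)}v_\tau = 0$ for every positive $\tilde{\mathfrak g}$-root $\tilde\beta$ and every $N > \max\{0,-\langle\tilde\beta^\vee,\tau\Psi(\lambda^*)\rangle\}$. By Lemma~\ref{2.4}(i) this pairing is strictly negative only when $\tilde\beta = \phi(\alpha)$ for a positive $\mathfrak{sl}_n$-root $\alpha$; by Lemma~\ref{2.4}(ii) it then equals $-\langle\alpha^\vee,\lambda^*\rangle$. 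So the substantive Demazure relations are $E_{\phi(\alpha)}^{(\langle\alpha^\vee,\lambda^*\rangle+m)}v_\tau=0$ for $\alpha>0$ and $m\ge 1$, while for $\tilde\beta \notin \phi(\Phi^+)$ the relations $E_{\tilde\beta}v_\tau=0$ are absorbed in the identity $U_{\mathbb Z}(\tilde{\mathfrak b})v_\tau = S^\bullet_{\mathbb Z}(\mathfrak n^{-,a})v_\tau$ from the proof of Proposition~\ref{proposition2.7}.

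Next I would transport these relations along the isomorphism of Theorem~\ref{ThmIsoSLn}, which incorporates the Chevalley twist of Lemma~\ref{Chevalleytwist}. The twist swaps $\lambda^*\leftrightarrow\lambda$ and $\alpha\leftrightarrow\alpha^{\ast}:=-w_0\alpha$, and together with the identity $\langle\alpha^\vee,\lambda^*\rangle = \langle(\alpha^{\ast})^\vee,\lambda\rangle$ it converts the Demazure relations, after relabeling $\alpha^{\ast}\mapsto\alpha$, into $f_\alpha^{(\langle\alpha^\vee,\lambda\rangle + m)}v_\lambda = 0$ in $V^a_{\mathbb Z}(\lambda)$ for every $\alpha>0$ and $m\ge 1$. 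The left-ideal in $U_{\mathbb Z}(\tilde{\mathfrak b})$ generated by the JMP relations then pulls back, using the $U_{\mathbb Z}(\mathfrak b)$-compatibility recalled in Section~\ref{borelaction} (cf.~\cite[Prop.~2.3]{FFL3}), to the $S^\bullet_{\mathbb Z}(\mathfrak n^{-,a})$-multiples of the $U_{\mathbb Z}(\mathfrak n^+)$-orbit of these divided powers; this is exactly the ideal $I_{\mathbb Z}(\lambda)$ described in the statement.

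The main obstacle is verifying the correspondence between the two closure operations: on one side, the left-ideal closure in $U_{\mathbb Z}(\tilde{\mathfrak b})$ of the basic JMP relations; on the other, the $S^\bullet_{\mathbb Z}(\mathfrak n^{-,a})$-multiples of the adjoint $U_{\mathbb Z}(\mathfrak n^+)$-orbit of the abelianized divided powers. The crucial technical check is that acting with $E_{\tilde\beta}$ for $\tilde\beta\notin\phi(\Phi^+)$ on a basic relation corresponds, through the twist and the identification of Section~\ref{borelaction}, to the adjoint action of the $\mathfrak{sl}_n$-root vector $e_\alpha \in \mathfrak n^+$ on $f_\alpha^{(k)}$ in $S^\bullet_{\mathbb Z}(\mathfrak n^{-,a})$. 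Once this matching is in place, the claimed description of $I_{\mathbb Z}(\lambda)$ is immediate from the JMP presentation.
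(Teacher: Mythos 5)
Your proposal follows the same broad route as the paper (invoke the Joseph--Mathieu--Polo presentation, identify the only ``negative'' roots via Lemma~\ref{2.4}, and transport along the isomorphism of Theorem~\ref{ThmIsoSLn}), and you correctly flag that the real difficulty is matching the two closure operations. But the obstacle you name at the end is not resolved, and your description of it also obscures where the genuine work lies.

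Concretely: inside $U_{\mathbb Z}(\tilde{\mathfrak n}^+)$, the positive roots not of the form $\phi(\alpha)$ split into two families (the ``second'' and ``third'' type of Section~\ref{generatordemzureideal}). Modulo the ideal $\tilde I_{\mathbb Z}(\infty)$, left multiplication by divided powers of second-type root vectors becomes an \emph{adjoint} action --- but not of a single copy of $\mathfrak b$: the second-type root vectors span $\mathfrak b^1_{\mathbb Z}\oplus\mathfrak b^2_{\mathbb Z}$, i.e.\ \emph{two} commuting copies acting on $\mathfrak n^{-,a}$ by left and right multiplication under the identification with $M_n$. Your phrasing (``acting with $E_{\tilde\beta}$ for $\tilde\beta\notin\phi(\Phi^+)$ corresponds to the adjoint action of the $\mathfrak{sl}_n$-root vector $e_\alpha$'') suggests a one-to-one matching that does not exist. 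The corollary's ideal $I_{\mathbb Z}(\lambda)$ is built only from the \emph{diagonal} action $U_{\mathbb Z}(\mathfrak n^+)\circ(-)$, so one must prove that, modulo $\tilde I_{\mathbb Z}(\infty)$, the $U_{\mathbb Z}(\mathfrak b^1\oplus\mathfrak b^2)$-span of the basic relations coincides with the $U_{\mathbb Z}(\Delta(\mathfrak b))$-span. This is exactly Lemma~\ref{deltabandb}, which the paper proves by decomposing the $B^1\times B^2$-orbit of $E_{\tilde\alpha}$ into $\Delta(B)$-orbits over an arbitrary field, observing the extra directions land in the ideal generated by third-type root vectors, and then passing back to $\mathbb Z$ by comparing ranks after base change. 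Nothing in your sketch supplies this argument or a substitute for it, so as written the proof has a genuine gap precisely at the step you label as the ``main obstacle''.

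A secondary point: the cyclicity $M_{\mathbb Z}(\lambda^*) = S^\bullet_{\mathbb Z}(\mathfrak n^{-,a})\cdot m_{\mathbb Z}(\lambda^*)$ (the unnamed lemma preceding Lemma~\ref{deltabandb}) also needs a PBW-reordering argument to push the second- and third-type factors to the left and absorb them; your appeal to ``the left-ideal closure pulling back'' glosses over this reordering as well.
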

\subsection{\,}\label{strategy}
The proof of Theorem \ref{ThmIsoSLn} will be given in section~\ref{PROOF}, it needs some preparation.
The strategy of the proof is summarized by the following diagram of $S_{\mathbb Z}^\bullet (\mathfrak{n}^{-,a})$-modules.
For a dominant weight $\lambda=a_1\omega_1+\ldots+a_{n-1}\omega_{n-1}$ (so $\lambda^*=a_{n-1}\omega_1+\ldots+a_{1}\omega_{n-1}$),
we get the following natural maps, the details are described below:
$$
\xymatrix{
S_{\mathbb Z}^\bullet(\mathfrak{n}^{-,a})/I_\mathbb{Z}(\lambda^*)\ar_(.6)\simeq^(.6)h[r]&  V_\ZZ^a(\lambda^*) \ar@{->>}[d]^a \ar[r]^<<<<<<<b &
*!<-5pt,0pt>{\;V_{\mathbb Z}^a(a_{1}\omega^*_{1})\otimes\cdots\otimes V_{\mathbb Z}^a(a_{n-1}\omega_{n-1}^*)} \ar[d]^c_{\wr\mid}\\
S_{\mathbb Z}^\bullet(\mathfrak{n}^{-,a} )/M_\mathbb{Z}(\lambda^*)\ar@{..>>}^f[u]\ar^(.55)g_(.55)\simeq[r]&
\tilde{V}_{\mathbb Z}(\Psi(\lambda))_\tau  \ar@{^{(}->}[r]^<<<<<d&
\tilde{V}_{\mathbb Z}(a_{1}\Psi(\omega_{1}))_\tau\otimes\cdots\otimes  \tilde{V}_{\mathbb Z}(a_{n-1}\Psi(\omega_{n-1}))_\tau.
}
$$
Let us describe the diagram above and the strategy of the proof. We recall that given a tensor product of cyclic $S^\bullet_\ZZ (\mathfrak{n}^{-,a})$-modules, 
the Cartan component of the tensor product is, by definition, the cyclic $S^\bullet_\ZZ (\mathfrak{n}^{-,a})$-submodule
generated by the tensor product of the cyclic generators. Further, recall that 
the isomorphism $V_{\mathbb Z}^a(\ell\omega^*_{j})\simeq  \tilde{V}_{\mathbb Z}(\ell\Psi(\omega_{j}))_\tau$
sends the highest weight vector $v_{\ell\omega^*_j}$ to the extremal weight vector $v_{\tau(\ell\Psi(\omega_{j}))}$ and uses the Chevalley involution.
The maps above are defined as follows:
\begin{itemize}
\item $b$ is induced by the compatibility of the PBW filtration with the tensor product, and it is surjective onto the Cartan component of this tensor product.
\item $I_\mathbb{Z}(\lambda^*)$ is the annihilator in $S_{\mathbb Z}^\bullet(\mathfrak{n}^{-,a})$ of the image of the highest weight vector
$v_{\lambda^*}$ in $V^a_{\mathbb{Z}}(\lambda^*)$ and $h$ is the corresponding quotient map.
\item $c$ is the isomorphism given by Proposition~\ref{proposition2.7}.
\item $d$ is the isomorphism onto the Cartan component of the tensor product. The fact that this is an isomorphism follows
by standard monomial theory \cite{LMS} or Frobenius splitting \cite{R}.
\item $a$ equals $c\circ b$ after identifying $\tilde{V}_{\mathbb Z}(\Psi(\lambda))_\tau$ with its image under $d$.
\item $M_\mathbb{Z}(\lambda^*)$  is the annihilator in $S_{\mathbb Z}^\bullet(\mathfrak{n}^{-,a})$ of the  extremal weight vector
$v_{\tau(\Psi(\lambda))}$ in $\tilde{V}_{\mathbb Z}(\Psi(\lambda))_\tau$ and $g$ is the corresponding quotient map.
\item $f$ is going to be constructed in the proof.
\end{itemize}
In order to finish the proof we will show that $M_\mathbb{Z}(\lambda^*)\subseteq I_\mathbb{Z}(\lambda^*)$, and the inclusion induces the surjective map $f$ which in turn shows that the map $a$ is an isomorphism.

\subsection{\,}\label{generatordemzureideal}
 We first determine $M_\mathbb{Z}(\lambda^*)$. By \cite[Lemme 26]{M}, the Demazure
module  $\tilde V_{\mathbb Z}(\Psi(\lambda))_\tau$ is isomorphic
to the algebra $U_{\mathbb{Z}}(\tilde{\mathfrak n}^+)$ modulo the left ideal
$\tilde I_{\mathbb{Z}}(\tau\Psi(\lambda))$ generated by the
$E_{\tilde\alpha}^{(m)}$  for all $m\ge 1$ if $\langle
{\tilde\alpha}^\vee,\tau\Psi(\lambda)\rangle\ge 0$,
and $E_{\tilde\alpha}^{(-\langle {\tilde\alpha}^\vee,\tau\Psi(\lambda)\rangle +
m)}$ for all $m\ge 1$
otherwise.

\subsection{\,} 
The annihilator $M_\mathbb{Z}(\lambda^*)$ is the intersection
of $U_{\mathbb Z}(\mathfrak n^{-,a})\subset U_{\mathbb Z}(\tilde{\mathfrak n}^+)$
with the ideal $\tilde I_{\mathbb{Z}}(\tau\Psi(\lambda))$.
To determine the intersection, let us divide the positive roots of
$SL_{2n}$ into three families:
\begin{itemize}
\item[-] $\tilde \alpha$ is of the \emph{first type} if $\tilde \alpha=\phi(\alpha)$ for some positive $SL_n$-root $\alpha$.
\item[-]  $\tilde \alpha=\tilde \epsilon_k-\tilde \epsilon_l$  is of
\emph{second type} if  $1\le k<l\le n$ or
$n+1\le k<l\le 2n$.
\item[-] $\tilde \alpha=\tilde \epsilon_k-\tilde \epsilon_l$   is of \emph{third
type} if $1\le k\le n$, $n+1\le l\le 2n$ and $k<l-n$.
\end{itemize}
$$
\left(\begin{array}{ccccccccc}
\ddots &   & 2nd  &   &  | & \ddots  &   & 3rd  &   \\
  & \ddots  &  type &   &  | &   & \ddots  & type  &   \\
  &   &  \ddots &   &  | & 1st  &   & \ddots  &   \\
  &   &   &  \ddots &  | & type  &   &   & \ddots  \\
 - & -  & -& -  &  | & -  & -  & -  & -  \\
  &   &   &   &  | & \ddots  &   & 2nd  &   \\
  &   &   &   &  | &   & \ddots  & type  &   \\
  &   &   &   &  | &   &   & \ddots  &   \\
  &   &   &   &  | &   &   &   &\ddots  \end{array}\right)
$$
The $E_{\tilde\alpha}$, ${\tilde\alpha}$ of second type, span a Lie subalgebra
isomorphic to two copies of $\mathfrak b_{\mathbb Z}$. Let us denote the first copy
spanned by the $E_{\tilde\alpha}$, $\tilde \alpha=\tilde \epsilon_k-\tilde \epsilon_l$, $1\le k<l\le n$ 
by $\mathfrak b^1_{\mathbb Z}$ and denote the second copy
spanned by the $E_{\tilde\alpha}$, $\tilde \alpha=\tilde \epsilon_k-\tilde \epsilon_l$, $n+1\le k<l\le 2n$,
by $\mathfrak b^2_{\mathbb Z}$. 

Let $\tilde I_{\mathbb{Z}}(\infty)\subset U_{\mathbb Z}(\tilde{\mathfrak n}^+)$ be the left 
$U_{\mathbb Z}(\tilde{\mathfrak n}^+) $-submodule generated by the $E^{(m)}_{\tilde\alpha}$, $m\ge 1$, 
${\tilde\alpha}$ of second or third type, then Lemma~\ref{2.4} and a PBW basis argument show that we have the following $\mathbb Z$-module
decomposition: 
$$
U_{\mathbb Z}(\tilde{\mathfrak n}^+)= U_{\mathbb Z}(\mathfrak n^{-,a})\oplus \tilde I_{\mathbb{Z}}(\infty)=
S^\bullet_{\mathbb Z}(\mathfrak n^{-,a})\oplus \tilde I_{\mathbb{Z}}(\infty)
\quad\text{and}\quad
\tilde I_{\mathbb{Z}}(\infty)\subset \tilde I_{\mathbb{Z}}(\tau\Psi(\lambda)).
$$
By abuse of notation we identify in the following $S^\bullet_{\mathbb Z}(\mathfrak n^{-,a})$ with $U_{\mathbb Z}(\tilde{\mathfrak n}^+)/\tilde I_{\mathbb{Z}}(\infty)$.
So determining $M_\mathbb{Z}(\lambda^*)=U_{\mathbb Z}(\mathfrak n^{-,a})\cap \tilde I_{\mathbb{Z}}(\tau\Psi(\lambda))$ 
(the intersection taking place in $U_{\mathbb Z}(\tilde{\mathfrak n}^+)$)
is equivalent to determining the image of $ \tilde I_{\mathbb{Z}}(\tau\Psi(\lambda))/ I_{\mathbb{Z}}(\infty)$ in $S^\bullet_{\mathbb Z}(\mathfrak n^{-,a})$. 
In the following we identify $M_\mathbb{Z}(\lambda^*)$ with
$ \tilde I_{\mathbb{Z}}(\tau\Psi(\lambda))/ I_{\mathbb{Z}}(\infty)$.

Note that $U_{\mathbb Z}(\mathfrak b^1_{\mathbb Z}\oplus \mathfrak b^2_{\mathbb Z})$ acts naturally via the adjoint action
on $\tilde{\mathfrak n}^+_{\mathbb Z}$ and hence on $U_{\mathbb Z}(\tilde{\mathfrak n}^+)$. The span of the $E_{\tilde \alpha}$,
$\tilde\alpha$ of second or third type is stable under this adjoint action of $\mathfrak b^1_{\mathbb Z}\oplus \mathfrak b^2_{\mathbb Z}$,
so $\tilde I_{\mathbb{Z}}(\infty)\subset U_{\mathbb Z}(\tilde{\mathfrak n}^+)_{\mathbb Z}$ is a submodule with respect to this adjoint action.

We get an induced $U_{\mathbb Z}(\mathfrak b^1_{\mathbb Z}\oplus \mathfrak b^2_{\mathbb Z})$-action on $S^\bullet_{\mathbb Z}(\mathfrak n^{-,a})$
which we denote by ``$\circ$''. Moreover, since $U^+_{\mathbb Z}(\mathfrak b^1_{\mathbb Z}\oplus \mathfrak b^2_{\mathbb Z})$
(the elements without constant term) is contained in $ \tilde  I_{\mathbb{Z}}(\infty)$, we see that 
 $M_\mathbb{Z}(\lambda^*)=\tilde I_{\mathbb{Z}}(\tau\Psi(\lambda))/\tilde I_{\mathbb{Z}}(\infty)$ is a 
 $U^+_{\mathbb Z}(\mathfrak b^1_{\mathbb Z}\oplus \mathfrak b^2_{\mathbb Z})$-stable submodule with respect to the ``$\circ$''-action of
$U_{\mathbb Z}(\mathfrak b^1_{\mathbb Z}\oplus \mathfrak b^2_{\mathbb Z})$.
As a first step in the proof of the theorem we show:
\begin{lem}
The left $S^\bullet_{\mathbb Z}(\mathfrak n^{-,a})$-submodule $M_\mathbb{Z}(\lambda^*)\subset S^\bullet_{\mathbb Z}(\mathfrak n^{-,a})$
is generated by 
$$
m_\mathbb{Z}(\lambda^*):=\langle U_{\mathbb Z}(\mathfrak b^1_{\mathbb Z}\oplus \mathfrak b^2_{\mathbb Z})\circ 
E_{\tilde\alpha}^{(-\langle {\tilde\alpha}^\vee,\tau\Psi(\lambda)\rangle +
m)}\mid \tilde\alpha\text{\ of first type and $m\ge 1$}\rangle_{\mathbb Z}.
$$
\end{lem}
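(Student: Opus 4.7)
The plan is to combine three ingredients: (a) the Joseph--Mathieu--Polo presentation of the Demazure ideal $\tilde I_\mathbb{Z}(\tau\Psi(\lambda))$ recalled in \S\ref{generatordemzureideal}; (b) the fact that all defining generators attached to roots of second or third type lie already in $\tilde I_\mathbb{Z}(\infty)$; and (c) a Hopf-algebraic identity which, modulo $\tilde I_\mathbb{Z}(\infty)$, converts left multiplication by $E_{\tilde\beta}^{(a)}$ into the $\circ$-action whenever $\tilde\beta$ lies outside $\mathfrak{n}^{-,a}$.

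For (b), Lemma~\ref{2.4} forces $\langle\tilde\alpha^\vee,\tau\Psi(\lambda)\rangle<0$ to occur only for $\tilde\alpha$ of first type; hence for $\tilde\alpha$ of second or third type every defining generator $E_{\tilde\alpha}^{(m)}$ with $m\ge 1$ is automatically in $\tilde I_\mathbb{Z}(\infty)$. Modulo $\tilde I_\mathbb{Z}(\infty)$ the only surviving Joseph--Mathieu--Polo generators are therefore the first-type ones
\[
X_{\alpha,m}:=E_{\phi(\alpha)}^{(-\langle\phi(\alpha)^\vee,\tau\Psi(\lambda)\rangle+m)},\qquad \alpha\in\Phi^+,\ m\ge 1.
\]

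For (c), the general Hopf identity $xy=\sum \mathrm{ad}(x_{(1)})(y)\,x_{(2)}$ specialised via $\Delta(E_{\tilde\beta}^{(a)})=\sum_{i+j=a}E_{\tilde\beta}^{(i)}\otimes E_{\tilde\beta}^{(j)}$ yields
\[
E_{\tilde\beta}^{(a)}\cdot y \;=\; \sum_{i+j=a}\mathrm{ad}(E_{\tilde\beta}^{(i)})(y)\cdot E_{\tilde\beta}^{(j)}.
\]
If $\tilde\beta$ is of second or third type, each factor $E_{\tilde\beta}^{(j)}$ with $j\ge 1$ appearing on the right is itself a defining generator of $\tilde I_\mathbb{Z}(\infty)$, so every term with $j\ge 1$ already lies in the left ideal $\tilde I_\mathbb{Z}(\infty)$. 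Only the $j=0$ summand survives, giving $E_{\tilde\beta}^{(a)}\cdot y\equiv E_{\tilde\beta}^{(a)}\circ y\pmod{\tilde I_\mathbb{Z}(\infty)}$. Because $\mathrm{ad}$ is an algebra homomorphism $U_\mathbb{Z}(\tilde{\mathfrak n}^+)\to\mathrm{End}\,U_\mathbb{Z}(\tilde{\mathfrak n}^+)$, iterating on a PBW monomial gives the analogous congruence $R\cdot y\equiv R\circ y\pmod{\tilde I_\mathbb{Z}(\infty)}$ for every $R$ in the augmentation ideal of $U_\mathbb{Z}(\mathfrak R)$, where $\mathfrak R$ is the Lie subalgebra of $\tilde{\mathfrak n}^+$ spanned by all root vectors outside $\mathfrak n^{-,a}$.

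To finish, choose a PBW decomposition $U_\mathbb{Z}(\tilde{\mathfrak n}^+)=U_\mathbb{Z}(\mathfrak n^{-,a})\cdot U_\mathbb{Z}(\mathfrak R)$ in which elements of $\mathfrak n^{-,a}$ come first. An arbitrary element $u\cdot X_{\alpha,m}\in\tilde I_\mathbb{Z}(\tau\Psi(\lambda))$ rewrites as $\sum N_iR_iX_{\alpha,m}$ with $N_i\in S^\bullet_\mathbb{Z}(\mathfrak n^{-,a})$ (commuting with $X_{\alpha,m}$) and $R_i\in U_\mathbb{Z}(\mathfrak R)$; the congruence of (c) then yields
\[
u\cdot X_{\alpha,m}\;\equiv\;\sum N_i\,\bigl(R_i\circ X_{\alpha,m}\bigr)\pmod{\tilde I_\mathbb{Z}(\infty)}.
\]
Inside $\mathfrak R$, every root vector outside $\mathfrak b^1\oplus\mathfrak b^2$ lies in the upper-right block of $\mathfrak{sl}_{2n}$, which is commutative, and hence commutes with every $E_{\phi(\alpha)}$. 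A short divided-power computation shows $\mathrm{ad}(E_{\tilde\gamma}^{(a)})(E_{\tilde\delta}^{(k)})=0$ whenever $[E_{\tilde\gamma},E_{\tilde\delta}]=0$ and $a\ge 1$. Writing $R_i=B_iC_i$ with $B_i\in U_\mathbb{Z}(\mathfrak b^1\oplus\mathfrak b^2)$ and $C_i$ in the subalgebra generated by the remaining root vectors of $\mathfrak R$, the multiplicativity of $\mathrm{ad}$ reduces $R_i\circ X_{\alpha,m}$ to $\epsilon(C_i)\,(B_i\circ X_{\alpha,m})$. This establishes the nontrivial inclusion $M_\mathbb{Z}(\lambda^*)\subseteq S^\bullet_\mathbb{Z}(\mathfrak n^{-,a})\cdot\bigl(U_\mathbb{Z}(\mathfrak b^1\oplus\mathfrak b^2)\circ\{X_{\alpha,m}\}\bigr)$; the reverse inclusion is immediate from the $\circ$-stability of $M_\mathbb{Z}(\lambda^*)$ recorded just above the lemma. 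The main obstacle is the careful bookkeeping of divided powers in the Hopf expansion, together with the treatment of the ``diagonal'' root vectors $\tilde\epsilon_k-\tilde\epsilon_{n+k}$ of the upper-right block, which have to be absorbed into the complementary subalgebra $\mathfrak R$ in order for the decomposition $U_\mathbb{Z}(\tilde{\mathfrak n}^+)=U_\mathbb{Z}(\mathfrak n^{-,a})\oplus \tilde I_\mathbb{Z}(\infty)$ to be exact.
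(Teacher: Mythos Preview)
Your argument is correct and follows essentially the same route as the paper's proof: both reduce, modulo $\tilde I_{\mathbb Z}(\infty)$, to the single observation that left multiplication by (divided powers of) root vectors outside $\mathfrak n^{-,a}$ coincides with the adjoint action on the first-type generators $E_{\phi(\alpha)}^{(\ell)}$, after which the third-type and diagonal contributions vanish because those root vectors lie in the commutative upper-right block together with $E_{\phi(\alpha)}$.

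The only stylistic difference is packaging. The paper argues by hand with monomials: it first commutes third-type factors to the right (landing in $\tilde I_{\mathbb Z}(\infty)$), then reorders the remaining monomial as $r_1 r_2$ with $r_1$ of first type and $r_2$ of second type, and finally verifies the identity $r_2\,E_{\tilde\alpha}^{(\ell)}\equiv \mathrm{ad}(r_2)(E_{\tilde\alpha}^{(\ell)})\bmod \tilde I_{\mathbb Z}(\infty)$ directly. You instead invoke the Hopf identity $xy=\sum \mathrm{ad}(x_{(1)})(y)\,x_{(2)}$ together with the coproduct formula for divided powers, and organise the argument via the PBW factorisation $U_{\mathbb Z}(\tilde{\mathfrak n}^+)=U_{\mathbb Z}(\mathfrak n^{-,a})\cdot U_{\mathbb Z}(\mathfrak R)$ followed by $U_{\mathbb Z}(\mathfrak R)=U_{\mathbb Z}(\mathfrak b^1\oplus\mathfrak b^2)\cdot U_{\mathbb Z}(\text{third}\oplus\text{diagonal})$. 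Your remark that the diagonal roots $\tilde\epsilon_k-\tilde\epsilon_{n+k}$ must be absorbed into $\mathfrak R$ is well taken; the paper's tripartition of positive roots tacitly groups them with the third type (indeed $\langle\tilde\alpha^\vee,\tau\Psi(\lambda)\rangle=0$ for such $\tilde\alpha$), and your treatment makes this explicit.
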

\begin{proof}
Let $\bar{\tt{m}}$ be an element of $M_\mathbb{Z}(\lambda^*)$
and choose a representative ${\tt{m}}$ in $\tilde I_{\mathbb{Z}}(\tau\Psi(\lambda))$. Since we are free to choose
a representative modulo $I_{\mathbb{Z}}(\infty)$, we may assume (see section~\ref{generatordemzureideal}) that $\tt m$ is a sum of monomials
of the form $r E_{\tilde\alpha}^{(\ell)}$ where $\ell= -\langle {\tilde\alpha}^\vee,\tau\Psi(\lambda)\rangle +
k$ for some $k\ge 1$ and $\tilde\alpha$ of first type, and $r$ is a monomial in the
$E_{\tilde\beta}^{(q)}$, $q\ge 0$ and $\tilde\beta$ of first, second or third type.

If $\tilde\gamma$ is a root of third type and ${\tilde\beta}$ is any other positive root, then
$[E_{\tilde\gamma},E_{\tilde\beta}]=c E_{\tilde\gamma'}$, where $c\in \mathbb Z$ and either $c=0$ or $\tilde\gamma'$ of
third type. So if $r$ has a factor $E_{\tilde\gamma}^{(p)}$,  $p>0$ and $\tilde\gamma$
a root of  third type, then we can rewrite the
monomial $r E_{\tilde\alpha}^{(\ell)}$ as a sum of monomials of the form $r'E_{\tilde\gamma}^{(p')}$, $p'>0$.
Since this sum is an element in  $I_{\mathbb{Z}}(\infty)$, without loss of generality we will assume in the following that 
$r$ has only factors of the form $E_{\tilde\beta}^{(\ell)}$, $\tilde\beta$ of first or second type.

If $\tilde\gamma$ is of second type and $\tilde\beta$ is of first type, then 
$[E_{\tilde\gamma},E_{\tilde\beta}]=c E_{\tilde\gamma'}$, where $c\in \mathbb Z$ and either $c=0$ or  
$\tilde\gamma'$ is of first or third type. So after reordering the factors
we can assume without loss of generality in the following that $r E_{\tilde\alpha}^{(\ell)}$ is of the form
$r=r_1r_{2}$, where $r_1$ is a monomial in the $E_{\tilde\beta}^{(\ell)}$, $\tilde\beta$ of first type,
and $r_2$ is a monomial in the $E_{\tilde\gamma}^{(\ell)}$, $\tilde\gamma$ of second type.

Recall that for $\tilde\gamma$ of second type we have
$$
\begin{array}{rcl}
E_{\tilde\gamma}E_{\tilde\beta_1}\cdots E_{\tilde\beta_m}&\equiv&
\sum_{i=1}^{m} E_{\tilde\beta_1}\cdots E_{\tilde\beta_{i-1}}(E_{\tilde\gamma}E_{\tilde\beta_i}-E_{\tilde\beta_i} E_{\tilde\gamma})E_{\tilde\beta_{i+1}}\cdots E_{\tilde\beta_m} 
\bmod I_{\mathbb{Z}}(\infty)\\
&\equiv&\sum_{i=1}^{m} E_{\tilde\beta_1}\cdots E_{\tilde\beta_{i-1}}\big(ad(E_{\tilde\gamma})(E_{\tilde\beta_i})\big)E_{\tilde\beta_{i+1}}\cdots E_{\tilde\beta_m}
\bmod I_{\mathbb{Z}}(\infty)\\
&\equiv&ad(E_{\tilde\gamma})\big(E_{\tilde\beta_1}\cdots E_{\tilde\beta_m}\big)
\bmod I_{\mathbb{Z}}(\infty).
\end{array}
$$
An appropriate reformulation of the equality above holds also for the divided powers of the root vectors. It follows that 
$r_2 E_{\tilde\alpha}^{(\ell)}\in m_\mathbb{Z}(\lambda^*)$ and hence 
$r E_{\tilde\alpha}^{(\ell)}=r_1 r_2 E_{\tilde\alpha}^{(\ell)}\in S^\bullet_{\mathbb Z}(\mathfrak n^{-,a})m_\mathbb{Z}(\lambda^*)$,
which implies that $M_\mathbb{Z}(\lambda^*)$ is generated by $m_\mathbb{Z}(\lambda^*)$
as a left $S^\bullet_{\mathbb Z}(\mathfrak n^{-,a})$-module.
\end{proof}
\subsection{\,}
To compare $M_\mathbb{Z}(\lambda^*)$ with $I_\mathbb{Z}(\lambda^*)$, we need a variant of the description
of $m_\mathbb{Z}(\lambda^*)$. Let $\Delta (\mathfrak b_\mathbb{Z})\subset \mathfrak b^1_{\mathbb Z}\oplus \mathfrak b^2_{\mathbb Z}$
be the Lie subalgebra obtained as a diagonally embedded copy of $\mathfrak b_\mathbb{Z}$. Let
$U_{\mathbb Z}(\Delta (\mathfrak b_\mathbb{Z}))\subset U_{\mathbb Z}( (\mathfrak b_\mathbb{Z}^1)\oplus  (\mathfrak b_\mathbb{Z}^2))$
be its hyperalgebra.
\begin{lem}\label{deltabandb}
$m_\mathbb{Z}(\lambda^*)=
\langle U_{\mathbb Z}(\Delta (\mathfrak b_\mathbb{Z})) \circ E_{\tilde\alpha}^{(-\langle\tilde\alpha^\vee, \tau\Psi(\lambda)\rangle +m)}\mid \tilde\alpha
\text{ of first type}, m\ge 1\rangle_\mathbb{Z}$.
\end{lem}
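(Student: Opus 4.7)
The inclusion $\supseteq$ is immediate from $\Delta(\mathfrak b_\mathbb{Z})\subseteq \mathfrak b^1_\mathbb{Z}\oplus \mathfrak b^2_\mathbb{Z}$. For the reverse inclusion, the key structural input is that $\mathfrak b^2_\mathbb{Z}$ is an ideal of $\mathfrak b^1_\mathbb{Z}\oplus \mathfrak b^2_\mathbb{Z}$ complementary to the diagonal: writing $\Delta(x)=(x,x)$ and $y=(0,y')\in \mathfrak b^2_\mathbb{Z}$, one checks that $[\Delta(x),y]=(0,[x,y'])\in \mathfrak b^2_\mathbb{Z}$, and vector-space-wise every $(a,b)\in \mathfrak b^1_\mathbb{Z}\oplus \mathfrak b^2_\mathbb{Z}$ is uniquely $\Delta(a)+(0,b-a)$. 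PBW at the level of hyperalgebras then gives the factorization $U_\mathbb{Z}(\mathfrak b^1_\mathbb{Z}\oplus \mathfrak b^2_\mathbb{Z})=U_\mathbb{Z}(\Delta(\mathfrak b_\mathbb{Z}))\cdot U_\mathbb{Z}(\mathfrak b^2_\mathbb{Z})$. Denoting the right-hand side of the lemma by $P$, it therefore suffices to prove $w\circ G\in P$ for every $w\in U_\mathbb{Z}(\mathfrak b^2_\mathbb{Z})$ and every generator $G=E_{\tilde\alpha}^{(-\langle \tilde\alpha^\vee,\tau\Psi(\lambda)\rangle+m)}$.

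I proceed by induction on the filtration degree $r$ of $w$. The base case $r=0$ is trivial. For the inductive step, write $w=Xw'$ with $X\in \mathfrak b^2_\mathbb{Z}$ and $w'\in U_{\le r-1}(\mathfrak b^2_\mathbb{Z})$; by the inductive hypothesis $w'\circ G=\sum_i \Delta(v_i)\circ G_i\in P$, reducing the task to showing each $X\Delta(v)\circ G_i\in P$. Using $X\Delta(v)=\Delta(v)X+[X,\Delta(v)]$ and iterating the basic commutation $[X,\Delta(x)]=[X,e_x^2]\in \mathfrak b^2_\mathbb{Z}$ (which holds because $[X,e_x^1]=0$ for $X\in \mathfrak b^2_\mathbb{Z}$, $x\in \mathfrak b_\mathbb{Z}$), one pushes $X$ through the factors of $\Delta(v)$ to rewrite $X\Delta(v)=\sum_j \Delta(v_j')Z_j$ with each $Z_j\in U_{\le 1}(\mathfrak b^2_\mathbb{Z})$. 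The problem is thereby reduced to the single-step statement: for every $Z\in \mathfrak b^2_\mathbb{Z}$ and every generator $G$, $Z\circ G\in P$.

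The single-step case is the heart of the argument. Writing $Z=e_\gamma^2$ for $\gamma=\epsilon_a-\epsilon_b\in\Phi^+$, the identity $\Delta(e_\gamma)=e_\gamma^1+Z$ gives $Z\circ G=\Delta(e_\gamma)\circ G-e_\gamma^1\circ G$, and the first summand is already in $P$. With $G=E_{\tilde\alpha}^{(N)}$ and $\tilde\alpha=\tilde\epsilon_q-\tilde\epsilon_{n+p}$ of first type, the commutator $[e_\gamma^1,E_{\tilde\alpha}]$ vanishes unless $b=q$; in the nontrivial case one finds $e_\gamma^1\circ G=\pm E_{\tilde\epsilon_a-\tilde\epsilon_{n+p}}\cdot E_{\tilde\alpha}^{(N-1)}$ by the derivation property on divided powers (using that the bracketed root vectors commute inside $\tilde{\mathfrak n}^+$). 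A short case analysis on $(a,b,p,q)$ shows that whenever $Z\circ G$ itself is nonzero modulo $\tilde I_\mathbb{Z}(\infty)$ (which forces $a=p$ and $b<q$), the $e_\gamma^1$-contribution either vanishes outright (when $b\neq q$) or, in the borderline case $a=p=b-q$, produces a factor $E_{\tilde\epsilon_p-\tilde\epsilon_{n+p}}$ of ``boundary'' type, whose root space lies outside $\mathfrak n^{-,a}$ and hence is zero in $S^\bullet_\mathbb{Z}(\mathfrak n^{-,a})=U_\mathbb{Z}(\tilde{\mathfrak n}^+)/\tilde I_\mathbb{Z}(\infty)$. In every case $Z\circ G\equiv\Delta(e_\gamma)\circ G\pmod{\tilde I_\mathbb{Z}(\infty)}$, placing $Z\circ G$ in $P$.

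\textbf{Main obstacle.} The delicate point is precisely the single-step cancellation: one must carefully track which ``crossing'' roots $\tilde\epsilon_k-\tilde\epsilon_{n+k}$ can appear as the parasitic $e_\gamma^1$-contribution when $Z$ acts nontrivially, and verify that each lies in $\tilde I_\mathbb{Z}(\infty)$. It is precisely this collapse in the quotient $S^\bullet_\mathbb{Z}(\mathfrak n^{-,a})$ that allows the smaller diagonal $\mathfrak b_\mathbb{Z}$-action to generate the full $\mathfrak b^1_\mathbb{Z}\oplus \mathfrak b^2_\mathbb{Z}$-orbit.
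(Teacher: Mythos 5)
Your argument is a genuinely different route from the paper's. The paper passes to an algebraically closed field $k$, reinterprets the Ad-action of the hyperalgebras as the linear span of the $B^1\times B^2$- and $\Delta(B)$-orbits, and shows by an explicit matrix computation that the orbit of $E_{n,1}$ under $B^1\times B^2$ is the union over $\lambda\in k$ of the $\Delta(B)$-orbits of $E_{n,1}+\lambda E_{1,1}$; the parasitic $E_{1,1}$-contribution lands in the ideal $I(\mathfrak d^3)\subset \tilde I_k(\infty)$, and the final equality over $\mathbb Z$ is deduced by the base-change argument ``an inclusion of $\mathbb Z$-modules that becomes an equality after $\otimes_{\mathbb Z}k$ for every field $k$ is an equality''. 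You instead propose a direct hyperalgebra commutator calculation over $\mathbb Z$. The core observation in your single-step case --- that whenever $Z\circ G$ survives modulo $\tilde I_{\mathbb Z}(\infty)$ the extra $e_\gamma^1$-contribution does not --- is exactly the same collapse the paper exploits through $E_{1,1}^{(\ell)}\in I(\mathfrak d^3)$, so you have found the right phenomenon. (Your formula ``$a=p=b-q$'' is garbled; the case you mean is $a=p$, $b=q$, where both $Z\circ G$ and $e_\gamma^1\circ G$ produce a diagonal factor $E_{\tilde\epsilon_p-\tilde\epsilon_{n+p}}$ lying in $\tilde I_{\mathbb Z}(\infty)$.)

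However, there is a real gap in the induction, and it is located exactly at the point that makes the lemma nontrivial over $\mathbb Z$. You write a general $w\in U_{\mathbb Z}(\mathfrak b^2_{\mathbb Z})$ of filtration degree $r$ as $w=Xw'$ with $X\in\mathfrak b^2_{\mathbb Z}$ (a degree-one Lie algebra element) and $w'$ of lower degree. In the Kostant $\mathbb Z$-form this factorization is not available: the divided power $(e_\gamma^2)^{(m)}$, $m\ge2$, lies in $U_{\mathbb Z}(\mathfrak b^2_{\mathbb Z})$ but is not equal to $e_\gamma^2\cdot(e_\gamma^2)^{(m-1)}$ (the latter is $m\,(e_\gamma^2)^{(m)}$), and more generally it is not a $\mathbb Z$-linear combination of products of elements of $\mathfrak b^2_{\mathbb Z}$. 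Your single-step computation is carried out only for $Z\in\mathfrak b^2_{\mathbb Z}$ itself, not for its divided powers, so the inductive step does not reach the full $\mathbb Z$-form of the hyperalgebra. As written the argument proves the statement after $\otimes_{\mathbb Z}\mathbb Q$, which is already known and is not the point: the entire purpose of this lemma in the paper is to establish the equality over $\mathbb Z$, so that it holds after base change to fields of arbitrary (in particular positive) characteristic. To close the gap one would need a divided-power version of your single-step analysis for $(e_\gamma^2)^{(m)}\circ E_{\tilde\alpha}^{(N)}$, $m\ge1$, or else one must argue as the paper does, deducing the $\mathbb Z$-equality from equalities over all fields. (A second, smaller, point: the PBW factorization $U_{\mathbb Z}(\mathfrak b^1_{\mathbb Z}\oplus\mathfrak b^2_{\mathbb Z})=U_{\mathbb Z}(\Delta(\mathfrak b_{\mathbb Z}))\cdot U_{\mathbb Z}(\mathfrak b^2_{\mathbb Z})$ does hold over $\mathbb Z$ --- the binomial/Vandermonde identities give $(e_\gamma^1)^{(m)}$ and $\binom{H_i^1}{m}$ as integral combinations of the $\Delta$- and $\mathfrak b^2$-divided powers --- but this deserves a word of justification, since you are invoking PBW in a setting where it is not automatic.)
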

\begin{proof}
We assume first that $k$ is an algebraically closed field of arbitrary characteristic.
Let $B$ be the subgroup of upper triangular invertible matrices in $SL_n(k)$, so $\text{Lie\,}B=\mathfrak b$.
Let $B^1\times B^2\subset SL_{2n}(k)$ be the subgroup with Lie algebra $\mathfrak b^1\oplus \mathfrak b^2$
and denote by $\Delta(B)\subset B^1\times B^2$ the diagonally embedded group isomorphic to $B$.

Let $\mathfrak q$ be the sum of the $SL_{2n}$-root spaces corresponding to roots of second or third  type. 
Then $\tilde{\mathfrak n}^+=\mathfrak n^{-,a}\oplus \mathfrak q$
and we identify $\mathfrak n^{-,a}$ with $\tilde{\mathfrak n}^+ / \mathfrak q$. The adjoint action of $B^1\times B^2$ on
$\mathfrak{sl}_{2n}$ admits $\tilde{\mathfrak n}^+$ as well as $\mathfrak q$ as submodules, so we get an induced 
$B^1\times B^2$-action on $\mathfrak n^{-,a}=\tilde{\mathfrak n}^+ / \mathfrak q$. This action naturally extends
to the commutative hyperalgebra $S_k^\bullet(\mathfrak n^{-,a})$.

If we replace the group action of $B^1\times B^2$ by the induced action of the hyperalgebra 
$U_k(\mathfrak b^1\oplus \mathfrak b^2)$ of the group, then we get the action of $U_k(\mathfrak b^1\oplus \mathfrak b^2)$
on $U_k(\tilde{\mathfrak n}^+)$ respectively on $S^\bullet_k(\mathfrak n^{-,a})$ discussed above, similarly for the action of $\Delta(B)$
and its hyperalgebra  $U_{k}(\Delta (\mathfrak b))$. Recall that for a root $\tilde\alpha$ of type 1 we have
$$
U_k(\mathfrak b^1\oplus \mathfrak b^2)\circ E_{\tilde\alpha}^{(m)} =\langle \text{Ad}((b^1,b^2))\circ (E_{\tilde\alpha})^{(m)}\mid (b_1,b_2)\in B^1\times B^2\rangle,
$$
i.e. the smallest $U_k(\mathfrak b^1\oplus \mathfrak b^2)$ stable subspace containing $(E_{\tilde\alpha})^{(m)}$ is the linear span of the $B^1\times B^2$-orbit. The same
holds in the other case, so we have:
$$
U_k(\Delta(\mathfrak b))\circ E_{\tilde\alpha}^{(m)} =\langle \text{Ad}((b,b))\circ (E_{\tilde\alpha})^{(m)}\mid b\in B\rangle.
$$
Let $\mathfrak d$ be the sum of the $SL_{2n}$-root spaces corresponding to roots of first or third  type
and let $\mathfrak d^3$ be just the sum of the root spaces corresponding to roots  of third  type, so $\mathfrak d=\mathfrak n^{-,a}\oplus \mathfrak d^3$.
We identify $\mathfrak d\subset \mathfrak{sl}_{2n} $ with $M_n(k)$, formally this can be done by the map
$$
\chi:\mathfrak d\rightarrow M_n(k), \quad  \tilde A=\left(\begin{array}{cc}0 & A \\ 0 & 0\end{array}\right)\mapsto A,
$$
where $A$ is a $n\times n$ matrix.
In the following we simplify the notation and omit the map $\chi$. We freely identify $\mathfrak d$ with $M_n(k)$, so we denote by $A$
the $n\times n$ matrix as well as the $2n\times 2n$-matrix $\tilde A\in \mathfrak d$. Note that
for $(b_1,b_2)\in B^1\times B^2$ we get
$$
\text{Ad}((b_1,b_2))\circ \tilde A=\left(\begin{array}{cc}b_1 & 0 \\ 0 & b_2\end{array}\right)\left(\begin{array}{cc}0 & A \\ 0 & 0\end{array}\right)  
\left(\begin{array}{cc}b^{-1}_1 & 0 \\ 0 & b^{-1}_2\end{array}\right)=
\left(\begin{array}{cc}0 & b_1  {A} b_2^{-1}  \\ 0 & 0\end{array}\right),
$$
we just write $\text{Ad}((b_1,b_2))\circ (A)=b_1  {A} b_2^{-1}$ and  $\text{Ad}((b,b))\circ (A)=b   A b^{-1}$.
Recall that $\chi$ is just a vector space isomorphism. If we equip in addition $M_n(k)$ with the trivial Lie bracket, then this becomes
also a Lie algebra homomorphism. In this sense we identify also the (commutative) Lie subalgebras $\mathfrak{n}^{-,a}$ and $\mathfrak d^3$
with subalgebras of $M_n$. 
An elementary calculation shows how the $B^1\times B^2$-orbit through $E_{\tilde\alpha} = E_{\epsilon_{n}-\epsilon_{n+1}}$ 
breaks up into $\Delta(B)$-orbits, recall that we identify $\mathfrak d$ with $M_n(k)$ and  $E_{\epsilon_{n}-\epsilon_{n+1}}\in \mathfrak d$ corresponds to $E_{n,1}$:
$$
\begin{array}{rcl}
\{\text{Ad}((b_1,b_2))\circ (E_{\tilde\alpha})\mid (b_1,b_2)\in B^1\times B^2\}
&=&\{ b_1 E_{n,1} b^{-1}_2\mid b_1,b_2\in B\} \\
&=&\bigcup_{\lambda\in k} 
\{ b (E_{n,1}+\lambda E_{1,1}) b^{-1}\mid b \in B\} \\&\subset& M_n(k).
\end{array}
$$
We conclude for the linear span:
$$
\begin{array}{rcl}
U_k(\mathfrak b^1\oplus \mathfrak b^2)\circ E_{\tilde\alpha}^{(m)} &=&\langle \text{Ad}((b_1,b_2))\circ\big(E_{\tilde\alpha})^{(m)}\mid (b_1,b_2)\in B^1\times B^2\rangle \\
&=&\langle \big(\text{Ad}((b_1,b_2))\circ(E_{\tilde\alpha})\big)^{(m)}\mid (b_1,b_2)\in B^1\times B^2\rangle \\
&=&  \langle \big( b_1 E_{n,1} b^{-1}_2\big)^{(m)}\mid b_1,b_2\in B\rangle \\
&=&  \langle \bigcup_{\lambda\in k}  \{ b (  E_{n,1}+\lambda   E_{1,1})^{(m)} b^{-1}\mid b \in B\}\rangle. \\
\end{array}
$$
Let $I(\mathfrak d^3)\subset U_k(\mathfrak d)$ be the left ideal in the hyperalgebra generated by $\mathfrak d^3$.
It follows:
$$
\begin{array}{rcl}
U_k(\mathfrak b^1\oplus \mathfrak b^2)\circ E_{\tilde\alpha}^{(m)} \bmod I(\mathfrak d^3)&=& 
\langle \bigcup_{\lambda\in k}  \{ \big(b (  E_{n,1}+\lambda   E_{1,1}) b^{-1}\big)^{(m)}\mid b \in B\}\rangle \bmod I(\mathfrak d^3) \\
&=& \langle  \big( b   E_{n,1}b^{-1}\big)^{(m)} \mid b\in B\rangle \bmod I(\mathfrak d^3)\\
\end{array}
$$
because the $E^{(\ell)}_{1,1}$, $\ell\ge 1$, lie in the $\Delta(B)$-stable ideal $I(\mathfrak d^3)$. It follows:
$$
\begin{array}{rcl}
U_k(\mathfrak b^1\oplus \mathfrak b^2)\circ E_{\tilde\alpha}^{(m)} \bmod I(\mathfrak d^3)
&=& \langle \text{Ad}((b,b))(E_{\tilde\alpha})^{(m)}\mid b\in B\rangle \bmod I(\mathfrak d^3)\\
&=& U_k(\Delta(\mathfrak b))\circ E_{\tilde\alpha}^{(m)}\bmod I(\mathfrak d^3).
\end{array}
$$
Since $I(\mathfrak d^3)\subset \tilde I_k(\infty)$, the equation holds in 
$S^\bullet(\mathfrak{n}^{-,a})=U_k(\tilde{\mathfrak{n}}^+) / \tilde I(\infty)$ too and hence
$$
U_k(\mathfrak b^1\oplus \mathfrak b^2)\circ E_{\tilde\alpha}^{(m)} =
U_k(\Delta(\mathfrak b))\circ E_{\tilde\alpha}^{(m)}
$$
in $S^\bullet(\mathfrak{n}^{-,a})$.
It is now easy to see that the same arguments prove the equality for all $E_{\tilde\alpha}^{(m)}$, $\tilde\alpha$
of first type and $m\ge 1$. We have clearly
$$
U_{\mathbb Z}(\mathfrak b^1\oplus \mathfrak b^2)\circ E_{\tilde\alpha}^{(m)} \supseteq
U_{\mathbb Z}(\Delta(\mathfrak b))\circ E_{\tilde\alpha}^{(m)}.
$$
Since we have equality after base change for fields of arbitrary characteristics, the equality of the modules holds also over $\mathbb Z$. 
In particular, the following equality holds  in $S^\bullet_{\mathbb Z}(\mathfrak{n}^{-,a})$:
$$
\begin{array}{rcl}
m_\mathbb{Z}(\lambda^*)&=&\langle U_{\mathbb Z}(\mathfrak b^1_{\mathbb Z}\oplus \mathfrak b^2_{\mathbb Z})\circ 
E_{\tilde\alpha}^{(-\langle {\tilde\alpha}^\vee,\tau\Psi(\lambda)\rangle +
m)}\mid \tilde\alpha\text{\ of first type and $m\ge 1$}\rangle_{\mathbb Z}\\
&=& \langle U_{\mathbb Z}(\Delta (\mathfrak b_\mathbb{Z})) \circ E_{\tilde\alpha}^{(-\langle\tilde\alpha^\vee, \tau\Psi(\lambda)\rangle +m)}\mid \tilde\alpha
\text{ of first type and } m\ge 1\rangle_\mathbb{Z}.
\end{array}
$$
\end{proof}
\subsection{Proof of Theorem~\ref{ThmIsoSLn}}\label{PROOF}
Recall the identification of the abelianized version of $\mathfrak n^-\subset \mathfrak{sl}_n$ with $\mathfrak{n}^{-,a}\subset \mathfrak{sl}_{2n}$,
which sends the image of a Chevalley generator $f_\alpha$ to $E_{\phi(\alpha)}$. By  Lemma~\ref{2.4} (see also Lemma~\ref{Chevalleytwist}
for the twist $\lambda\leftrightarrow\lambda^*$) the elements $E_{\tilde\alpha}^{(-\langle\tilde\alpha^\vee, \tau\Psi(\lambda)\rangle +m)}$, 
where $\tilde\alpha$ is of first type and $m\ge 1$ are elements of $I_\mathbb{Z}(\lambda^*)$. Now the $U_{\mathbb{Z}}(\mathfrak b)$-module 
structure on $S_{\mathbb Z}^\bullet(\mathfrak n^{-,a})$ described in section~\ref{borelaction} is the same as the one described above,
so it follows that $m_\mathbb{Z}(\lambda^*)\subset I_\mathbb{Z}(\lambda^*)$ and hence $M_\mathbb{Z}(\lambda^*)\subset I_\mathbb{Z}(\lambda^*)$,
which, as explained in Section \ref{strategy}, finishes the proof of the theorem.
\qed

\subsection{\,} Let $\rho$ be the sum of all fundamental weights for $SL_n$ and set
$\tilde\rho=\Psi(\rho)$.
Let $Q_{\mathbb Z}\subset (SL_{2n})_{\mathbb Z}$ be the corresponding parabolic
$\mathbb Z$-subgroup. Recall that  $(N^{-,a})_\mathbb{Z}$ is a commutative
subgroup of the Borel subgroup $\tilde B_{\mathbb Z}$. For any $SL_{2n}$-root
$\tilde \alpha$ let
$U_{\mathbb Z, \tilde \alpha}$ be the associated root subgroup.
\begin{lem}\label{N^{-,a}-orbit}
The orbit $\tilde B_{\mathbb Z}.\tau\subset
(SL_{2n})_{\mathbb Z}/Q_{\mathbb Z}$ is equal to $N^{-,a}.\tau$,
and the map $N^{-,a}\rightarrow N^{-,a}.\tau$, $u\mapsto u\tau$, is a bijection.
\end{lem}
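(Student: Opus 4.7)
The plan is to invoke the standard Bruhat-type description of the Schubert cell in $(SL_{2n})_{\mathbb Z}/Q_{\mathbb Z}$ as an affine space parametrized by a specific family of root subgroups, and then to match these root subgroups with the ones generating $N^{-,a}$. Since $\tilde T_{\mathbb Z}\subset Q_{\mathbb Z}$ and $\tilde B_{\mathbb Z}=\tilde T_{\mathbb Z}\cdot\tilde U^+_{\mathbb Z}$ (with $\tilde U^+_{\mathbb Z}$ the unipotent radical of $\tilde B_{\mathbb Z}$), we have $\tilde B_{\mathbb Z}.\tau=\tilde U^+_{\mathbb Z}.\tau$ in $(SL_{2n})_{\mathbb Z}/Q_{\mathbb Z}$. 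For a positive root $\tilde\alpha$, the root subgroup $U_{\tilde\alpha,\mathbb Z}$ stabilizes the coset $\tau Q_{\mathbb Z}$ if and only if $\tau^{-1}\tilde\alpha\in\tilde\Phi^+\cup\Phi_{L_Q}$, where $\Phi_{L_Q}$ denotes the root system of the Levi of $Q$. Setting
$$
I(\tau):=\{\tilde\alpha\in\tilde\Phi^+\mid \tau^{-1}\tilde\alpha\in\tilde\Phi^-\ \text{and}\ -\tau^{-1}\tilde\alpha\notin\Phi_{L_Q}^+\},
$$
standard Chevalley group scheme theory over $\mathbb Z$ implies that the multiplication map $\prod_{\tilde\alpha\in I(\tau)}U_{\tilde\alpha,\mathbb Z}\to\tilde U^+_{\mathbb Z}.\tau Q_{\mathbb Z}/Q_{\mathbb Z}$ (in any chosen order) is an isomorphism of schemes.

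The central computation is to identify $I(\tau)$ with the set of first-type roots, i.e.\ the roots spanning $\mathfrak n^{-,a}$. I would use the permutation description $\tau^{-1}(k)=2k-1$, $\tau^{-1}(n+k)=2k$ for $k\in\{1,\ldots,n\}$, and analyse a positive root $\tilde\alpha=\tilde\epsilon_a-\tilde\epsilon_b$ (with $a<b$) in the three cases (i) $a,b\in\{1,\ldots,n\}$; (ii) $a\in\{1,\ldots,n\}$ and $b=n+b'\in\{n+1,\ldots,2n\}$; (iii) $a,b\in\{n+1,\ldots,2n\}$. A direct check shows that $\tau^{-1}\tilde\alpha$ is positive in cases (i) and (iii). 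In case (ii) one obtains $\tau^{-1}\tilde\alpha=\tilde\epsilon_{2a-1}-\tilde\epsilon_{2b'}$, which is negative precisely when $b'<a$, and in that event the positive counterpart has one even index ($2b'$) and one odd index ($2a-1$), hence does not lie in $\Phi_{L_Q}^+=\{\tilde\epsilon_{2m-1}-\tilde\epsilon_{2m}\mid m\in\{1,\ldots,n\}\}$. This identifies $I(\tau)$ with $\{\tilde\epsilon_j-\tilde\epsilon_{n+i}\mid 1\le i<j\le n\}$, precisely the first-type roots.

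Combining the two steps, the product $\prod_{\tilde\alpha\in I(\tau)}U_{\tilde\alpha,\mathbb Z}$ coincides with $N^{-,a}$ by the very definition of $N^{-,a}$ from Example~\ref{example1} and Example~\ref{example3}, so $\tilde B_{\mathbb Z}.\tau=N^{-,a}.\tau$, and the orbit map $u\mapsto u\tau$ is obtained as the composition of the evident product-decomposition isomorphism $N^{-,a}\xrightarrow{\sim}\prod_{\tilde\alpha\in I(\tau)}U_{\tilde\alpha,\mathbb Z}$ with the cell isomorphism above, hence is a scheme-theoretic bijection. The main obstacle will be the bookkeeping of the parity case analysis that singles out the first-type roots; the scheme-theoretic statements over $\mathbb Z$ are routine consequences of the Chevalley-group-scheme structure of $(SL_{2n})_{\mathbb Z}$ and will be cited rather than reproved.
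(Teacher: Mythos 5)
Your proposal is correct and follows essentially the same route as the paper's own proof: both reduce to the standard Schubert-cell decomposition $\tilde B_{\mathbb Z}\tau Q_{\mathbb Z}/Q_{\mathbb Z}\simeq\prod_{\tilde\alpha\in\Gamma}U_{\tilde\alpha,\mathbb Z}$ where $\Gamma$ consists of positive roots $\tilde\alpha$ with $\tau^{-1}\tilde\alpha<0$ and not a Levi root of $Q$, and then identify $\Gamma$ with the roots of $\mathfrak n^{-,a}$. The only (cosmetic) difference is that the paper packages the membership test for $\Gamma$ as $\langle\tilde\alpha^\vee,\tau(\tilde\rho)\rangle<0$ and invokes Lemma~\ref{2.2}, while you check the two conditions directly from the explicit permutation formula for $\tau^{-1}$; the computations and the resulting set are the same.
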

\begin{proof}
We have $\tilde B_{\mathbb Z}.\tau=\prod_{\tilde\alpha>0} U_{\mathbb
Z,\tilde\alpha}.\tau$, and the map
$\prod_{\tilde\alpha\in \Gamma} U_{\mathbb Z, \tilde\alpha}\rightarrow
\prod_{\tilde\alpha\in \Gamma} U_{\mathbb Z,\tilde\alpha}.\tau$ is a bijection,
where $\Gamma$ is the set of all positive roots of $SL_{2n}$ such that
$\tau^{-1}(\tilde\alpha)<0$ and $\tau^{-1}(\tilde\alpha)$ is not
an element of the root system of $Q_{\mathbb Z}$. Now this condition is
fulfilled if and only if
$\langle\tau^{-1}(\tilde\alpha^\vee),\tilde\rho\rangle<0$, or, equivalently,
$\langle\tilde\alpha^\vee,\tau(\tilde\rho)\rangle<0$. By Lemma~\ref{2.2} this is
only possible if $\tilde\alpha=\tilde \epsilon_i-\tilde\epsilon_j$, $i<j$, is
such that $1\le i\le n$,
$n+1\le j\le 2n$ and $i>j-n$. But this implies that the root subgroup
$U_{\mathbb Z,\tilde\alpha}$ is a subgroup of $N^{-,a}$,
and all root subgroups of $(SL_{2n})_{\mathbb Z}$ contained in $N^{-,a}$ satisfy
this condition. It follows that $N^{-,a}$ is the product of all root subgroups
corresponding to positive
roots of $SL_{2n}$ in $\Gamma$.
\end{proof}

Recall that the degenerate flag scheme ${\mathcal{F}\ell}(\lambda)_{\mathbb
Z}^a$ is the closure of the
$N_{\mathbb Z}^{-,a}$-orbit through the highest weight vector in $\mathbb
P(V_{\mathbb Z}^a(\lambda))$.
\begin{thm}
Let $\lambda$ be a dominant weight for $SL_n$.  The Schubert scheme
$X_{\mathbb Z}(\tau)\subset$ $\mathbb P(\tilde V_{\mathbb Z}(\Psi(\lambda))_\tau)$
is
isomorphic to the degenerate partial flag scheme
${\mathcal{F}\ell}(\lambda^*)_{\mathbb Z}^a$ for $(SL_n)_{\mathbb Z}$, and this
isomorphism induces a
module isomorphism $H^0(X_{\mathbb Z}(\tau),\mathcal L_{\Psi(\lambda)})\simeq
(V_{\mathbb Z}^a(\lambda^*))^*$.
\end{thm}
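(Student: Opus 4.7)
The plan is to upgrade the $\mathfrak{n}^{-,a}_{\mathbb{Z}}$-module isomorphism of Theorem~\ref{ThmIsoSLn} to an $N^{-,a}_{\mathbb Z}$-equivariant isomorphism of projective $\mathbb Z$-schemes, and to combine this with the orbit description of Lemma~\ref{N^{-,a}-orbit} and the standard identification of global sections on a Schubert scheme with the dual Demazure module.

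First, replacing $\lambda$ by $\lambda^*$ in Theorem~\ref{ThmIsoSLn} (and using $\lambda^{**}=\lambda$) yields an isomorphism $\tilde V_{\mathbb Z}(\Psi(\lambda))_\tau \simeq V^a_{\mathbb Z}(\lambda^*)$ of $\mathfrak n^{-,a}_{\mathbb Z}$-modules; by the way it was constructed (Proposition~\ref{proposition2.7} followed by the Chevalley twist of Lemma~\ref{Chevalleytwist}) the extremal weight vector $v_\tau$ is identified, up to sign, with the image of the highest weight vector $v_{\lambda^*}$. Because $N^{-,a}_{\mathbb Z}$ is commutative unipotent with Lie algebra $\mathfrak n^{-,a}_{\mathbb Z}$, its action on any $\mathbb Z$-free finite rank $S_{\mathbb Z}^\bullet(\mathfrak n^{-,a})$-module is integrated from that of $\mathfrak n^{-,a}_{\mathbb Z}$ via truncated divided-power exponentials defined over $\mathbb Z$. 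Hence the module isomorphism is automatically $N^{-,a}_{\mathbb Z}$-equivariant and descends to an $N^{-,a}_{\mathbb Z}$-equivariant scheme isomorphism $\mathbb P(\tilde V_{\mathbb Z}(\Psi(\lambda))_\tau) \simeq \mathbb P(V^a_{\mathbb Z}(\lambda^*))$ sending $[v_\tau]$ to $[v_{\lambda^*}]$.

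Second, to compare the two orbit closures, observe that $\Psi(\rho)$ is regular, so the parabolic $Q_{\mathbb Z}$ of Lemma~\ref{N^{-,a}-orbit} is contained in the stabilizer $P_{\Psi(\lambda),\mathbb Z}$ of the line $[v_{\Psi(\lambda)}]$. Projecting the identity $\tilde B_{\mathbb Z}.\tau = N^{-,a}_{\mathbb Z}.\tau$ of Lemma~\ref{N^{-,a}-orbit} from $\tilde G_{\mathbb Z}/Q_{\mathbb Z}$ onto $\tilde G_{\mathbb Z}/P_{\Psi(\lambda),\mathbb Z}$ and identifying the target with its Pl\"ucker image inside $\mathbb P(\tilde V_{\mathbb Z}(\Psi(\lambda)))$ gives $\tilde B_{\mathbb Z}.[v_\tau] = N^{-,a}_{\mathbb Z}.[v_\tau]$ inside $\mathbb P(\tilde V_{\mathbb Z}(\Psi(\lambda))_\tau)$. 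The isomorphism of the previous paragraph then sends this orbit to $N^{-,a}_{\mathbb Z}.[v_{\lambda^*}]$ inside $\mathbb P(V^a_{\mathbb Z}(\lambda^*))$, whose closure is by definition $\mathcal{F}\ell(\lambda^*)^a_{\mathbb Z}$. Passing to scheme-theoretic closures inside the ambient projective $\mathbb Z$-schemes yields $X_{\mathbb Z}(\tau) \simeq \mathcal{F}\ell(\lambda^*)^a_{\mathbb Z}$.

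Finally, for the cohomology statement, we invoke the classical identification $H^0(X_{\mathbb Z}(\tau), \mathcal L_{\Psi(\lambda)}) \simeq \tilde V_{\mathbb Z}(\Psi(\lambda))_\tau^{*}$ as $U_{\mathbb Z}(\tilde{\mathfrak b})$-modules. This holds over $\mathbb Z$ by the projective normality of Schubert varieties and Kempf vanishing in arbitrary characteristic, i.e.\ the Joseph--Mathieu--Polo circle of results already invoked in Section~\ref{generatordemzureideal}. Dualizing Theorem~\ref{ThmIsoSLn} and transporting the $S_{\mathbb Z}^\bullet(\mathfrak n^{-,a})$-structure through the scheme isomorphism established above gives the asserted module isomorphism. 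The main obstacle, and the reason the argument is not purely formal, lies in ensuring that each step is valid $\mathbb Z$-integrally rather than fiber by fiber: this requires flatness, reducedness, and projective normality of both $X_{\mathbb Z}(\tau)$ and $\mathcal{F}\ell(\lambda^*)^a_{\mathbb Z}$ in every characteristic, which is precisely what the $\mathbb Z$-form theory of Demazure modules and the preceding construction of $V^a_{\mathbb Z}(\lambda^*)$ as a free $\mathbb Z$-module (Corollary~\ref{abelfree}) furnish.
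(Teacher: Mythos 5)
Your proposal is correct and takes essentially the same route as the paper's proof: identify the $\tilde B_{\mathbb Z}$-orbit through $[v_\tau]$ with the $N^{-,a}_{\mathbb Z}$-orbit via Lemma~\ref{N^{-,a}-orbit}, transport it through the module isomorphism of Theorem~\ref{ThmIsoSLn}, take scheme-theoretic closures, and invoke the standard identification of $H^0(X_{\mathbb Z}(\tau),\mathcal L_{\Psi(\lambda)})$ with the dual Demazure module. Your additional remarks (the divided-power exponential argument showing the module isomorphism is genuinely $N^{-,a}_{\mathbb Z}$-equivariant, and the projection $\tilde G_{\mathbb Z}/Q_{\mathbb Z}\to\tilde G_{\mathbb Z}/P_{\Psi(\lambda),\mathbb Z}$ handling the non-regular case) are sound elaborations of points the paper treats tersely.
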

\begin{proof} We consider only the case where $\lambda$ is regular, the arguments in the general case are similar.
With respect to the isomorphism in Lemma~\ref{N^{-,a}-orbit}, the orbit
$$\tilde B_{\mathbb Z}.\tau\subset (SL_{2n})_{\mathbb Z}/(\tilde
P_\lambda)_{\mathbb Z}\hookrightarrow \mathbb P(\tilde V(\Psi(\lambda))),$$
through the extremal weight vector, which is the same as the $N^{-,a}$-orbit, is
mapped onto the $N_{\mathbb Z}^{-,a}$-orbit through
the highest weight vector in $\mathbb P(V_{\mathbb Z}^a(\lambda^*))$. By
definition, the Schubert scheme $X_{\mathbb Z}(\tau)$ is the closure of
the orbit $\tilde B_{\mathbb Z}.\tau$ and the degenerate
flag scheme ${\mathcal{F}\ell}(\lambda^*)_{\mathbb Z}^a$ is the closure of the
$N_{\mathbb Z}^{-,a}$-orbit.
It follows that the module isomorphism induces an isomorphism
between the Schubert scheme $X_{\mathbb Z}(\tau)\subseteq \mathbb P(\tilde
V_{\mathbb Z}(\Psi(\lambda))_\tau)$ and the degenerate
flag scheme ${\mathcal{F}\ell}(\lambda^*)_{\mathbb Z}^a$ in $\mathbb P(V_{\mathbb
Z}^a(\lambda))$. Hence we get induced isomorphisms
$$
H^0(X_{\mathbb Z}(\tau),\mathcal L_{\Psi(\lambda)})\simeq (\tilde V_{\mathbb
Z}(\Psi(\lambda))_\tau)^*\simeq (V_{\mathbb Z}^a(\lambda^*))^*
$$
for the dual modules.
\end{proof}
Let $k$ be an algebraically closed field of arbitrary characteristic and denote
by
$V_k(\lambda)=V_{\mathbb Z}(\lambda)\otimes_{\mathbb Z} k$,
$U_k({\mathfrak{sl}}_n)=U_{\mathbb Z}({\mathfrak{sl}}_n)\otimes_{\mathbb Z} k$,
$U_k({\mathfrak{n}}^-)=U_{\mathbb Z}(\mathfrak{n}^-)\otimes_{\mathbb Z} k$ etc.
the objects obtained by base change.
The PBW filtration
$$
V_k(\lambda)_\ell=\langle Y_1^{(m_1)}\cdots Y_N^{(m_N)}v_\lambda \mid
m_1+\ldots+m_N\le \ell, Y_1,\ldots,Y_N\in {\mathfrak{n}}^-_k\rangle
$$
and the associated graded space $V^a_k(\lambda)$ is defined in the same way as
before, and by Corollary~\ref{abelfree} we have
$V_k(\lambda)_\ell=V_{\mathbb Z}(\lambda)_\ell\otimes_{\mathbb Z} k$ and
$V^a_k(\lambda)=V^a_{\mathbb Z}(\lambda)\otimes_{\mathbb Z} k$.
The group $N_{k}^{-,a}$ acts on the abelianized representation $V_k^a(\lambda)$,
and the degenerate flag variety ${\mathcal{F}\ell}(\lambda)_{k}^a$
is the closure of the $N_{k}^{-,a}$-orbit through the highest weight vector in
$\mathbb P(V_{k}^a(\lambda))$.

Now by the results of \cite{M,MR,R,RR} one knows that for Demazure modules we
have $\tilde V_k(\Psi(\lambda))_\tau=
\tilde V_{\mathbb Z}(\Psi(\lambda))_\tau\otimes_{\mathbb Z} k$,
$X_k(\tau)=X_{\mathbb Z}(\tau)\otimes_{\mathbb Z} k$, etc., and 
that the Schubert varieties are Frobenius split, projectively normal and have
rational singularities. It follows that
$V^a_k(\lambda^*)=\tilde V_k(\Psi(\lambda))_\tau$ and
${\mathcal{F}\ell}(\lambda^*)_{k}^a =X_k(\tau)$, so
the degenerate flag variety has in this case the same nice geometric properties
as the Schubert variety.
For a dominant $SL_n$-weight $\lambda=\sum_{i=1}^{n-1} a_i\omega_i$ let the
support $\text{supp\,} \lambda$ of $\lambda$  be the set
$\{i\mid 1\le i\le n-1,\, a_i\not=0\}$.

\begin{cor}
The degenerate partial flag variety ${\mathcal{F}\ell}(\lambda)_k^a$ depends only on
$\text{supp\,}\lambda$,
it is a projectively normal variety, Frobenius split, with rational
singularities.
\end{cor}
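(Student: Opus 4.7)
The plan is to deduce all three assertions from the identification ${\mathcal{F}\ell}(\lambda)_k^a \cong X_k(\tau)$ established in the preceding theorem, combined with classical results on Schubert varieties. Concretely, that theorem (valid for arbitrary dominant $\lambda$, as remarked in its proof) realises the degenerate flag variety as the Schubert subvariety $X_k(\tau) \subseteq \tilde G_k/Q_{\Psi(\lambda^*)}$, where $Q_{\Psi(\lambda^*)}$ is the parabolic subgroup stabilising the highest weight line of $\tilde V_k(\Psi(\lambda^*))$.

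For the claim that ${\mathcal{F}\ell}(\lambda)_k^a$ depends only on $\text{supp}\,\lambda$, the key observation is that the parabolic $Q_{\Psi(\lambda^*)}$ is determined by the set of simple roots $\tilde\alpha_j$ with $\langle \tilde\alpha_j^\vee, \Psi(\lambda^*)\rangle > 0$. By the definition $\Psi(\sum a_i \omega_i) = \sum a_i \tilde\omega_{2i}$, this set is exactly $\{\tilde\alpha_{2i} \mid i \in \text{supp}\,\lambda^*\}$, and the involution $i \mapsto n-i$ identifies $\text{supp}\,\lambda^*$ with $\text{supp}\,\lambda$. Hence $Q_{\Psi(\lambda^*)}$ and, with it, the ambient partial flag variety $\tilde G_k/Q_{\Psi(\lambda^*)}$ and the Schubert subvariety $X_k(\tau)$ inside it, depend only on $\text{supp}\,\lambda$; different dominant weights sharing the same support yield the same Schubert variety, only embedded via different very ample line bundles $\mathcal{L}_{\Psi(\lambda^*)}$.

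The remaining three geometric properties then follow from well-established results on Schubert varieties in partial flag varieties $\tilde G_k/Q$ of split reductive $\mathbb{Z}$-groups over an algebraically closed field of arbitrary characteristic. Specifically, I would invoke the theorems of Mehta--Ramanathan, Ramanan--Ramanathan and Ramanathan cited as \cite{M, MR, R, RR}, which assert Frobenius splitting, projective normality and rationality of singularities for such Schubert varieties. Transporting each property across the isomorphism of the first step yields the corollary. There is no substantive obstacle in this argument: the genuine content has been done in the preceding theorem, and what remains is essentially a translation of well-known facts about Schubert varieties into the language of degenerate flag varieties.
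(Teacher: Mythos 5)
Your proof is correct and takes essentially the same route as the paper: identify ${\mathcal{F}\ell}(\lambda)_k^a$ with the Schubert variety $X_k(\tau)$ via the preceding theorem, then invoke the classical results of Mathieu, Mehta--Ramanathan, Ramanan--Ramanathan and Ramanathan for base change, Frobenius splitting, projective normality and rational singularities. You actually supply one useful detail the paper leaves implicit: the observation that the parabolic $Q_{\Psi(\lambda^*)}$, and with it the ambient variety $\tilde G_k/Q_{\Psi(\lambda^*)}$ and the Schubert subvariety $X_k(\tau)$, are determined by $\text{supp}\,\lambda$ via the explicit formula $\Psi(\sum a_i\omega_i)=\sum a_i\tilde\omega_{2i}$ together with the duality $i\mapsto n-i$; the paper simply defines $\text{supp}\,\lambda$ and states the corollary without spelling this out.
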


\begin{rem}\rm
In \cite{FF} the authors construct resolutions of the degenerate flag varieties
given by towers of $\mathbb P^1$-fibrations.  The steps of the successive
fibrations are indexed by the set of positive roots, which had been totally
reordered.
In fact their varieties are Bott-Samelson varieties (cf \cite[Appendix]{CL}) and
such an order (which actually should be thought of as an order on the set of
negative roots) is now natural since it corresponds to the subsequent steps of
the Bott-Samelson variety indexed by the reduced expression \eqref{tau}  of
$\tau$, under the identification of $-\alpha_{i,j}$ with $\tilde\alpha_{j,i+n}$.
\end{rem}

\section{A special Schubert variety -- the $Sp_{2m}$-case}\label{symplecticCase}

As for the $SL_n$-case, we want to realize for Example~\ref{example2} the abelianized
representation $V_{\mathbb{Z}}(\lambda)^{a}$ for
$N_{\mathbb Z}^{-,a}$ as a Demazure submodule in an irreducible representation
for the larger group $Sp_{2(2m)}$.

\subsection{A special Weyl group element}
 Let us keep the same notation as in the previous sections and denote by $\mathfrak{h}\subset\mathfrak{sp}_{2m}$
(respectively $\tilde{\mathfrak{h}}\subset\mathfrak{sp}_{2(2m)}$)  the Cartan subalgebra of traceless complex
diagonal matrices and by  $\mathfrak{b}\subset\mathfrak{sp}_{2m}$
(respectively $\tilde{\mathfrak{b}}\subset\mathfrak{sp}_{2(2m)}$)  the Borel subalgebra of traceless complex
upper triangular matrices. Let $\{\epsilon_1,\cdots, \epsilon_{2m}\}$ (resp. $\{\tilde{\epsilon}_1,\cdots, \tilde{\epsilon}_{2(2m)}\}$) be a basis of the dual vector space $\mathfrak{h}^\ast$ (resp. $\tilde{\mathfrak{h}}^\ast$).
The choice of Cartan and Borel subalgebras we made determines the following set of  positive roots for $Sp_{2m}$:
 $$\alpha_{i,j}:=
 \begin{cases}
   \epsilon_i-\epsilon_j&1\leq i< j\leq m,\\
  \epsilon_i+\epsilon_{j-m}&1\leq i\leq m< j, \ i+j\leq 2m,\\
 \end{cases}
$$
where  the simple roots are $\{\alpha_i:=\alpha_{i,i+1}\mid 1\leq i\leq m-1\}\cup \{\alpha_m:=2\epsilon_m\}$. We will write $\tilde\alpha_{i,j}$ for the $Sp_{2m}$-roots. The Weyl group of $Sp_{2(2m)}$ is denoted   $\tilde W$. This is the group of linear transformations of $\mathfrak{h}^*$
generated by the elements  $\{r_i\mid 1\leq i\leq 2m\}$, where $r_i$ denotes the reflection with respect to the simple root $\tilde\alpha_i$.

\begin{defn}\label{tauSp} We define in $\tilde W$ a very special  element:
$$\bar\tau=(r_{2m}\cdots r_{m+1})\cdots (r_{2m}r_{2m-1}
r_{2m-2})(r_{2m}r_{2m-1})r_{2m}(r_{m} \cdots r_{2m-2})\cdots (r_4 r_5 r_6)(r_3 r_4) r_2.$$
 \end{defn}

Any element of the Weyl group $\tilde W$ of $Sp_{2(2m)}$ can be identified with an element of the symmetric group on $4m$ letters $\mathcal{S}_{4m}$, via $r_i=s_is_{4m-i}$, for $1\leq i\leq 2m-1$, and $r_n=s_{2m}$ (where, as usual,  $s_i$ denotes the transposition exchanging $i$ and $i+1$) and it  acts on the basis $\{e_i\mid i=1,\ldots, 4m\}$ of $\mathbb{C}^{4m}$ by permuting the indices.
It is an easy check to see that under this identification $\bar \tau$ equals the element $\tau$ of Definition \ref{deftau} for $n=2m$ and we hence have the following (cf. Lemma \ref{2.1}):
\begin{lem}\label{3.1}
In the irreducible $Sp_{2(2m)}$-representation $\tilde
V(\tilde\omega_{2i})\subset \Lambda^{2i}\mathbb C^{4m}$, $1\le i\le 2m$,
let $v_{\omega_{2i}}$ be the highest weight vector
$v_{\omega_{2i}}=e_1\wedge e_2\wedge\ldots\wedge e_{2i}$. Then (up to sign)
$$
\bar\tau(v_{\omega_{2i}})=e_1\wedge e_2\wedge\ldots\wedge e_{i}\wedge
e_{2m+1}\wedge e_{2m+2}\wedge\ldots\wedge e_{2m+i}.
$$
\end{lem}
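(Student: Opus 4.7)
The plan is to reduce Lemma~\ref{3.1} to Lemma~\ref{2.1} applied with $n = 2m$, via the identification $\bar\tau = \tau$ inside $\mathcal S_{4m}$ asserted in the paragraph preceding the lemma.

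First, I would justify this identification. Under the embedding $\tilde W \hookrightarrow \mathcal S_{4m}$ defined by $r_i \mapsto s_i s_{4m-i}$ for $1 \le i \le 2m-1$ and $r_{2m} \mapsto s_{2m}$, I expand the reduced word for $\bar\tau$ into a product of simple transpositions in $\mathcal{S}_{4m}$. Since $s_i$ and $s_{4m-i}$ commute (being at distance at least two for $i \le 2m - 1$), the ``left half'' factors $s_p$ with $p \le 2m$ and the ``right half'' factors $s_p$ with $p > 2m$ can be separated by commutations. On the subset $\{1,\ldots,2m\}$ only the left half acts nontrivially, and the resulting word matches the reduced decomposition of $\tau$ from Definition~\ref{deftau} with $n = 2m$; symmetry under $i \leftrightarrow 4m-i$ then forces the same conclusion on $\{2m+1,\ldots,4m\}$. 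In practice the fastest verification is the direct one: evaluate $\bar\tau$ on each $t \in \{1,\ldots,4m\}$ by sweeping through its reduced word and compare against the closed formula \eqref{Def:Tau}; this is the ``easy check'' the authors refer to.

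With the identification in hand, formula \eqref{Def:Tau} (for $n = 2m$) gives $\bar\tau(2k-1) = k$ and $\bar\tau(2k) = 2m + k$ for $1 \le k \le 2m$. A representative of $\bar\tau$ in $N_{\tilde T}(\tilde T) \subset Sp_{4m}$ can be chosen so as to act on the standard basis $\{e_1,\ldots,e_{4m}\}$ of $\mathbb{C}^{4m}$ as the (signed) permutation induced by this element in $\mathcal S_{4m}$. The sub-representation $\tilde V(\tilde\omega_{2i}) \subset \Lambda^{2i} \mathbb{C}^{4m}$ is $\tilde W$-stable, and applying $\bar\tau$ to $v_{\omega_{2i}} = e_1 \wedge \cdots \wedge e_{2i}$ yields
\[
\bar\tau(v_{\omega_{2i}}) = \pm\, e_1 \wedge e_{2m+1} \wedge e_2 \wedge e_{2m+2} \wedge \cdots \wedge e_i \wedge e_{2m+i}.
\]
A final reshuffle of the factors into ascending index order introduces only a further sign, which is absorbed by the ``up to sign'' in the statement, producing the claimed $e_1 \wedge \cdots \wedge e_i \wedge e_{2m+1} \wedge \cdots \wedge e_{2m+i}$.

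The main obstacle — admittedly modest — is the identification in step one. The reduced word for $\bar\tau$ carries an ``initial triangular block'' $(r_{2m}\cdots r_{m+1})\cdots(r_{2m}r_{2m-1})r_{2m}$ that has no factor-by-factor analogue in the $\tau$-decomposition, and naively substituting $r_i = s_i s_{4m-i}$ does not land on the $\tau$-word literally. The key point is that this block, once expanded and its ``mirrored'' simple transpositions $s_{4m-i}$ are commuted past the rest of the word, contributes precisely the simple reflections indexed by $j > 2m$ that $\tau$ requires under the type-$\tt A$ description. So the substantive content of the check is purely combinatorial bookkeeping of how the folding $r_i \mapsto s_i s_{4m-i}$ distributes the letters of the $\bar\tau$-word across the two halves of $\{1,\ldots,4m\}$; once this is organized, Lemma~\ref{2.1} applied with $n=2m$ delivers the conclusion immediately.
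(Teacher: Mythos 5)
Your proof is correct and is precisely the paper's approach: the paragraph preceding Lemma~\ref{3.1} establishes $\bar\tau=\tau$ inside $\mathcal S_{4m}$ (via $r_i\mapsto s_is_{4m-i}$) and then invokes Lemma~\ref{2.1} with $n=2m$. You have merely filled in the details of the ``easy check'' that the paper leaves to the reader, and your reordering-and-sign bookkeeping at the end is fine.
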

We denote by $\{\omega_i\mid 1\leq i\leq m\}$, respectively $\{\tilde\omega_i\mid 1\leq i\leq 2m\}$, the fundamental weights of $\mathfrak{sp}_{2m}$, respectively $\mathfrak{sp}_{2(2m)}$. They are characterized by the property $\langle \alpha_i^\vee,\omega_j\rangle=\delta_{i,j}$. 
\begin{defn}\label{mappsisymp}
Let $\Psi:\mathfrak{h}^\ast\rightarrow\tilde{\mathfrak{h}}^\ast$ be the linear map defined on the weight lattice as follows
$$
\Psi(\sum_{i=1}^{n-1} a_i\omega_i):=\sum_{i=1}^{n-1} a_i\tilde\omega_{2i}.
$$
\end{defn}
Note that $\Psi$ sends dominant weights to dominant weights.
Let $\lambda=b_1\epsilon_1+\ldots +b_{m}\epsilon_{m}$, $b_1\ge \ldots \ge b_{m}\ge
0$ be a dominant weight for $Sp_{2m}$.
\begin{lem}\label{3.2}
$\bar\tau(\Psi(\lambda))=b_1\tilde\epsilon_1+\ldots + b_{m}\tilde\epsilon_{m} -
b_m\tilde\epsilon_{m+1}-\ldots - b_{1}\tilde\epsilon_{2m}.$
\end{lem}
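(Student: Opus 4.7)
The plan is to deduce Lemma~\ref{3.2} directly from Lemma~\ref{3.1}, reducing the case of a general dominant weight $\lambda$ to the fundamental case by linearity of $\bar\tau$ and $\Psi$.

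First I would write $\lambda=b_1\epsilon_1+\ldots+b_m\epsilon_m$ as a non-negative integer combination of fundamental weights. Since $\omega_i=\epsilon_1+\ldots+\epsilon_i$ for $i=1,\ldots,m$, setting $b_{m+1}=0$, we have
\[
\lambda=\sum_{i=1}^m(b_i-b_{i+1})\omega_i,\qquad b_i-b_{i+1}\ge 0.
\]
By Definition~\ref{mappsisymp}, this gives $\Psi(\lambda)=\sum_{i=1}^m(b_i-b_{i+1})\tilde\omega_{2i}$. Applying $\bar\tau$ (which is a linear map on $\tilde{\mathfrak h}^*$) reduces the problem to computing $\bar\tau(\tilde\omega_{2i})$ for each $i$.

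Next I would read off $\bar\tau(\tilde\omega_{2i})$ from Lemma~\ref{3.1}. Recall the convention fixed in Example~\ref{example2}: with respect to the form $\langle e_i,e_j\rangle=\delta_{j,4m-i+1}$ and the chosen maximal torus of diagonal matrices in $Sp_{4m}$, the basis vector $e_k$ is a weight vector of weight $\tilde\epsilon_k$ for $1\le k\le 2m$ and of weight $-\tilde\epsilon_{4m+1-k}$ for $2m+1\le k\le 4m$. Computing the weight of the vector
\[
e_1\wedge\ldots\wedge e_i\wedge e_{2m+1}\wedge e_{2m+2}\wedge\ldots\wedge e_{2m+i}
\]
from Lemma~\ref{3.1} therefore yields
\[
\bar\tau(\tilde\omega_{2i})=\tilde\epsilon_1+\ldots+\tilde\epsilon_i-\tilde\epsilon_{2m+1-i}-\ldots-\tilde\epsilon_{2m},
\]
the last $i$ terms coming from the contribution of the $e_{2m+k}$'s.

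Finally I would put these together and collect coefficients. Substituting the formula for $\bar\tau(\tilde\omega_{2i})$ into $\bar\tau(\Psi(\lambda))=\sum_{i=1}^m(b_i-b_{i+1})\bar\tau(\tilde\omega_{2i})$, telescoping gives: for $1\le j\le m$ the coefficient of $\tilde\epsilon_j$ equals $\sum_{i\ge j}(b_i-b_{i+1})=b_j$; and for $m+1\le k\le 2m$, setting $k=2m+1-i$, the coefficient of $-\tilde\epsilon_k$ is $\sum_{i\ge 2m+1-k}(b_i-b_{i+1})=b_{2m+1-k}$, which gives $-b_m,-b_{m-1},\ldots,-b_1$ for $k=m+1,m+2,\ldots,2m$, respectively. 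This is exactly the claimed expression. There is no real obstacle: the only subtlety is getting the symplectic sign convention right for the weights of $e_{2m+1},\ldots,e_{2m+i}$, which is dictated by the choice of form in Example~\ref{example2}.
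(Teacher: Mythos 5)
Your proof is correct and follows the same route the paper intends: the paper's proof of Lemma~\ref{3.2} is simply the one-line remark "Follows directly from Lemma~\ref{3.1}", and your argument spells out exactly that deduction — reading off $\bar\tau(\tilde\omega_{2i})$ as the weight of the extremal vector $e_1\wedge\ldots\wedge e_i\wedge e_{2m+1}\wedge\ldots\wedge e_{2m+i}$ (using the sign convention $e_{2m+k}\mapsto -\tilde\epsilon_{2m+1-k}$ imposed by the symplectic form of Example~\ref{example2}), and then telescoping over the fundamental-weight decomposition of $\lambda$. No gaps; the only care needed is precisely the sign convention you flag, and you handle it correctly.
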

\begin{proof}
Follows directly from  Lemma~\ref{3.1} above.
\end{proof}

As in the special linear case, we define a map from the set of negative roots of $Sp_{2m}$ to the set of  positive $Sp_{2(2m)}$-roots  by sending $\alpha_{i,j}$ to $\tilde\alpha_{j,i+2m}$. The following is the symplectic analogue of Lemma~\ref{2.4}:
\begin{lem}\label{5.4}
\begin{itemize}
\item[{\it i)}] Let $\lambda$ be a dominant weight for $Sp_{2m}(\mathbb C)$. For a positive
$Sp_{2(2m)}$-root $\tilde\alpha$ we have $\langle \tilde \alpha^\vee,\tau(\Psi(\lambda))\rangle<0$
only if the $\tilde\alpha$-root space  lies in $Lie(\Un)$.
\item[{\it ii)}] Let $\lambda$ be a dominant $Sp_{2m}$-weight, let $\alpha=\alpha_{i,j}$ be a positive $Sp_{2m}$-root
and let $\tilde\alpha=\tilde\alpha_{j,i+2m}$ be the   $Sp_{2(2m)}$ positive root associated with $-\alpha$. Then we have:
$$
\langle\alpha^\vee,\lambda \rangle =
-\langle\tilde\alpha^\vee,\bar\tau(\Psi(\lambda))\rangle.
$$
\item[{\it iii)}] Let $\lambda$ be a dominant weight for $Sp_{2m}(\mathbb C)$ and let $\tilde\alpha$
be a positive $Sp_{2(2m)}$-root. Then $E_{\tilde\alpha} v_{\bar\tau}\not=0$ in $\tilde V(\Psi(\lambda))$ only if
$\tilde\alpha=\tilde\alpha_{j,i+2m}$, where $\alpha_{i,j}$ is a positive $Sp_{2m}$-root such that $\langle \alpha_{i,j}^\vee,\lambda\rangle>0$.
\end{itemize}
\end{lem}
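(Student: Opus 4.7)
The plan is to treat all three parts by direct computation from the explicit formula in Lemma~\ref{3.2}, exactly paralleling the $SL_n$-argument in Lemma~\ref{2.4}: writing $\bar\tau(\Psi(\lambda))=\sum_{k=1}^m b_k\tilde\epsilon_k-\sum_{k=1}^m b_k\tilde\epsilon_{2m+1-k}$ and denoting by $c_k$ the coefficient of $\tilde\epsilon_k$, the two crucial features are that $c_1\ge c_2\ge\cdots\ge c_{2m}$ is weakly decreasing and satisfies the antisymmetry $c_{2m+1-k}=-c_k$. With this in hand, parts (i) and (ii) become case analyses in the $\mathfrak{sp}_{4m}$ root system, and part (iii) drops out as a corollary.

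For part (i), I would enumerate the positive $\mathfrak{sp}_{4m}$-roots in three families: $\tilde\epsilon_a-\tilde\epsilon_b$ (short, $a<b$), $\tilde\epsilon_a+\tilde\epsilon_b$ (short, $a<b$), and $2\tilde\epsilon_a$ (long). For the first family, $c_a-c_b\ge 0$ by monotonicity, so no negative pairings arise. For $\tilde\epsilon_a+\tilde\epsilon_b$, negativity of $c_a+c_b$ forces $b>m$; via the antisymmetry one sees that the subcase $a\le m<b$ requires $a+b\ge 2m+2$, and in the subcase $m<a<b$ both summands are non-positive. For $2\tilde\epsilon_a$ the negativity forces $a>m$. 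In each surviving case I verify, by tracking matrix positions through the embedding $\eta$ of $\mathfrak n^-\subset\mathfrak{sp}_{2m}$ into the upper-right $2m\times 2m$ block of $\mathfrak{sp}_{4m}$, that the associated root space sits in $\mathfrak n_\eta^{-,a}$; a dimension count ($m^2=|\Phi^+|$ on both sides) confirms that nothing is missed.

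For part (ii), the identification $\tilde\alpha_{j,i+2m}=\tilde\epsilon_j+\tilde\epsilon_{2m+1-i}$ (in the relevant index ranges) reduces the right-hand side to $c_j+c_{2m+1-i}=c_j-c_i$ via antisymmetry. In the case $\alpha=\epsilon_i-\epsilon_j$ this equals $b_j-b_i=-\langle\alpha^\vee,\lambda\rangle$, and parallel computations dispose of the short case $\alpha=\epsilon_i+\epsilon_k$ ($i\ne k$) and the long case $\alpha=2\epsilon_i$ (whose image is the long root $2\tilde\epsilon_{2m+1-i}$, with the appropriate symplectic coroot giving the compensating factor of $2$). Part (iii) then follows immediately: in the irreducible complex module $\tilde V(\Psi(\lambda))$, the extremal weight vector $v_{\bar\tau}$ lies at an endpoint of every $\tilde\alpha$-string, so $E_{\tilde\alpha}v_{\bar\tau}\ne 0$ is equivalent to $\langle\tilde\alpha^\vee,\bar\tau(\Psi(\lambda))\rangle<0$; part (i) identifies such $\tilde\alpha$ as $\tilde\alpha_{j,i+2m}$ for some positive $\alpha_{i,j}$, and part (ii) translates the negativity into $\langle\alpha_{i,j}^\vee,\lambda\rangle>0$.

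The main obstacle is bookkeeping rather than anything conceptual: unlike the $SL_n$-case, the $\mathfrak{sp}_{4m}$ root system contains both short and long roots, and several subcases arise according to whether indices fall in $\{1,\ldots,m\}$, $\{m+1,\ldots,2m\}$ or in the shifted range $\{2m+1,\ldots,4m\}$. One must take care that $\phi$ sends the long $\mathfrak{sp}_{2m}$-roots $2\epsilon_i$ to the long $\mathfrak{sp}_{4m}$-roots $2\tilde\epsilon_{2m+1-i}$, and that the ``mixed'' positive $\mathfrak{sp}_{4m}$-roots $\tilde\epsilon_a+\tilde\epsilon_b$ with $a\le m<b$ split cleanly, via the antisymmetry of the $c_k$, into those with $a+b\ge 2m+2$ (which lie in $\mathfrak n_\eta^{-,a}$) and those with $a+b\le 2m+1$ (which do not).
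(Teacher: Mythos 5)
Your proposal is correct and follows essentially the same route as the paper: a direct case-by-case evaluation of $\langle\tilde\alpha^\vee,\bar\tau(\Psi(\lambda))\rangle$ starting from the explicit formula of Lemma~\ref{3.2}, with part (iii) deduced from (i) and (ii) via the standard fact that an extremal weight vector sits at an end of every $\tilde\alpha$-string. Your reorganization around the antisymmetry $c_{2m+1-k}=-c_k$ and the dimension count $m^2=|\Phi^+|$ makes the bookkeeping a bit more transparent than the paper's raw index-range split, but it is the same computation.
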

\begin{proof}
Lemma~\ref{3.2} implies that for $\lambda=b_1\epsilon_1+\ldots +b_{m-1}\epsilon_{m-1}$ we get
$$
\langle (\tilde \epsilon_i- \tilde\epsilon_j)^\vee ,\bar\tau(\Psi(\lambda))\rangle =
\left\{
\begin{array}{rl}
b_i- b_j \ge 0&\text{if\ }1\le i< j\le m ,\\
b_i+b_{2m-j+1} \ge 0&\text{if\ }1\le i\le m < j\le 2m ,\\
-b_{2m-i+1}+b_{2m-j+1} \ge 0&\text{if\ }m< i< j \le 2m ,\\
\end{array}
\right.
$$
and
$$
\langle  (\tilde \epsilon_i+\tilde\epsilon_{j-2m})^\vee,\bar\tau(\Psi(\lambda))\rangle =
\left\{
\begin{array}{rl}
b_i+b_{j-2m} \ge 0&\text{if\ }1\le i\le m, 2m< j\le 3m,\\
b_i-b_{4m-j+1} \ge 0&\text{if\ }1\le i\le m, \textrm{ and }\\&3m < j \le 4m-i+1 ,\\
b_{i}-b_{4m-j+1} \le 0&\text{if\ }1\le i\le m, 3m < j,\\&\textrm{and }  4m-i+1<j ,\\
-b_{2m-i+1}-b_{2m-j+1} \le 0&\text{if\ } m<  i\le 2m,3m < j\\
\end{array}
\right.
$$
where always $i+j\le 4m$. This proves the corollary.
\end{proof}

\section{The fundamental representations - the $\mathfrak{sp}_{2m}$-case}
Let  $\mathfrak n_{\mathbb Z}^{-,a,i}$ be the direct sum of all root spaces of
Lie $(Sp_{2(2m)})_{\mathbb Z}$ corresponding to positive roots $\beta$ such that
$\langle \beta^\vee, \bar\tau(\tilde\omega_{2i})\rangle < 0$ for all  $1\leq i\leq m$. By Lemma~\ref{5.4},
such a space lies in $Lie(\UnZ)$.

By \cite[Lemme 26]{M}, the Demazure module  $\tilde V_{\mathbb
Z}(\ell\tilde\omega_{2i})_\tau$ is isomorphic
to the algebra $U_{\mathbb{Z}}(\tilde{\mathfrak n})$ modulo the left ideal
$\tilde I_{\mathbb{Z}}(\bar\tau\ell\tilde\omega_{2i})$ generated by the
$E_{k,l}^{(m)}$  for all $m\ge 1$ if $\langle
{\tilde\alpha_{k,l}}^\vee,\bar\tau\tilde\omega_{2i}\rangle\ge 0$,
and $E_{k,l}^{(-\langle
{\tilde\alpha_{k,l}}^\vee,\bar\tau\ell\tilde\omega_{2i}\rangle + m)}$ for all
$m\ge 1$
otherwise. Therefore all the root vectors (and their divided powers) not lying in $\mathfrak n_{\mathbb
Z}^{-,a,i}$ act trivially on
$\tilde V_{\mathbb{Z}}(\ell\tilde\omega_{2i})_{\bar\tau}$ and  hence in order to
describe its structure as an $\mathfrak{n}_{\mathbb{Z}}^{-,a}$-module it suffices
to consider only the $\mathfrak{n}_{\mathbb Z}^{-,a, i}$-action. Recall that 
$v_{\bar\tau}=\bar\tau(v_{0})$ denotes the generator of
$\tilde V_{\mathbb{Z}}(\Psi(\ell\omega_{i}))_{\bar\tau}$, then the above
discussion can be summarized as
\begin{equation}
\tilde V_{\mathbb Z}(\ell\tilde\omega_{2i})_{\bar\tau}=U_{\mathbb
Z}(\tilde{\mathfrak{n}}).v_{\bar\tau}=U_{\mathbb Z}(\mathfrak{n}^{-,a}).v_{\bar\tau}=
 U_{\mathbb Z}(\mathfrak{n}^{-,a, i}).v_{\bar\tau}.
\end{equation}

Recall that we embedded in $(SL_{2(2m)})_{\mathbb Z}$ a copy $L(i)_{\mathbb Z}$
of $(SL_{2m})_{\mathbb Z}$, so that we could identify the $SL_{2(2m)}$-Demazure
module
$\tilde V_{\mathbb Z}^{SL_{2(2m)}}(\ell\tilde\omega_{2i})_{\tau}$ generated by
$\tau(v_{0})=\bar\tau (v_{0})=v_{\bar\tau}$ with
the Weyl module $V_{\mathbb Z}^{L(i)}(\ell\varpi_{i})$ for $L(i)_{\mathbb
Z}$.

For $1\leq k,l\leq 4m$, denote by $X_{k,l}$ the $4m\times 4m$-matrix having a 1 in position $(k,l)$ and whose all other entries are zero (for $k\neq l$ this is the $SL_{2(2m)}$-root operator corresponding to the $SL_{2(2m)}$-root $\alpha_{k,l}$), so that
 $$\mathfrak{n}^{-,a, i}=span \{X_{r,s}+X_{4m-s+1,4m-r+1}\mid i+1\leq r\leq 2m, \ 2m<s\leq 2m+i\}.$$

It is then immediate:

\begin{lem}\label{lemma3.3}
 Every element $y\in \mathfrak{n}^{-,a, i}_{\mathbb Z}$ can be written in a
unique way as $y=y_1+y_2$, with
 $y_1\in \mathfrak{n}^{-,a, i}\cap \hbox{Lie\,} L(i)_{\mathbb{Z}}$, $y_2\in
span\ \{X_{k,l}\mid l>2m+k\}$.
 Moreover, $y_2$ is uniquely determined by $y_1$.
\end{lem}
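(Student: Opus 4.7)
The plan is to verify the decomposition by direct inspection on the explicit generating set of $\mathfrak{n}^{-,a,i}$ exhibited just above the statement. Fix a generator
$$
y = X_{r,s} + X_{4m-s+1,\,4m-r+1}, \qquad i+1 \le r \le 2m,\ \ 2m+1 \le s \le 2m+i.
$$
Since $r,s \in [i+1, 2m+i]$, the first summand $X_{r,s}$ always lies in $\mathrm{Lie}\,L(i)_{\mathbb Z}$, the coordinate range being exactly the one on which the embedded copy $L(i)\cong SL_{2m}$ acts. The second summand $X_{4m-s+1,\,4m-r+1}$ either remains in $\mathrm{Lie}\,L(i)_{\mathbb Z}$ (precisely when $r \ge 2m-i+1$, so that $4m-r+1 \le 2m+i$, and then all of $y$ already sits in $L(i)$) or else its second index exceeds $2m+i$, in which case it contributes to the complementary subspace $\mathrm{span}\{X_{k,l}\mid l > 2m+k\}$. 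Assembling these pieces gives the decomposition $y = y_1+y_2$ for each generator, and linearity extends it to arbitrary elements of $\mathfrak{n}^{-,a,i}_{\mathbb Z}$.

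For uniqueness, one checks directly from the index ranges that
$$
\bigl(\mathfrak{n}^{-,a,i}\cap \mathrm{Lie}\,L(i)_{\mathbb Z}\bigr) \cap \mathrm{span}\{X_{k,l}\mid l > 2m+k\} = 0,
$$
since any $X_{k,l}$ with $k,l\in[i+1,2m+i]$ satisfies $l \le 2m+i \le 2m+k$, precluding $l > 2m+k$. Hence any decomposition into elements of these two subspaces is unique. The closing sentence \emph{``$y_2$ is uniquely determined by $y_1$''} then reflects the paired structure of the generators: the $L(i)$-piece records the indices $(r,s)$, which in turn completely determine the symplectic partner $X_{4m-s+1,\,4m-r+1}$, so that specifying $y_1$ alone pins down $y_2$ without reference to $y$.

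The only thing to monitor is the book-keeping: three layers of index conventions must be reconciled — the $Sp_{4m}$ root data used to define $\mathfrak{n}^{-,a,i}$, the ambient $SL_{4m}$ matrix coordinates in which $L(i)$ is described, and the partition of $[1,4m]$ into $[1,2m]\cup[2m+1,4m]$ implicit in the condition $l > 2m + k$. Since the paper labels the result as immediate, no deeper structural input is needed beyond a careful case-check on the generators.
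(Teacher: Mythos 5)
The paper treats this lemma as immediate and gives no proof, so there is nothing to compare your argument against directly; the right strategy is exactly what you do, namely to check the decomposition on the explicit generating set. However, your argument contains a genuine logical gap in the existence half.

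You correctly observe that, for a generator $y = X_{r,s} + X_{4m-s+1,\,4m-r+1}$ with $r < 2m-i+1$, the second summand has column index $4m-r+1 > 2m+i$. You then conclude that it therefore lies in $\mathrm{span}\{X_{k,l} \mid l > 2m+k\}$. This step does not follow: for $k = 4m-s+1$ and $l = 4m-r+1$ one has $l-k = s-r$, and since $i+1 \le r$ and $s \le 2m+i$ this difference satisfies $s - r \le 2m-1 < 2m$. So $l > 2m+k$ is \emph{never} satisfied by any elementary matrix occurring in a generator of $\mathfrak{n}^{-,a,i}$; if the lemma's side condition really were $l > 2m+k$, the subspace in question would not see $y_2$ at all and the claimed decomposition would fail outright. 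The condition that actually makes the lemma (and your computation) work is $l > 2m+i$ — which is exactly what is used when the lemma is applied in the corollary that follows (``we can write $y = p(y) + y_2$ with $y_2$ in the span of the matrices $X_{i,j}$ with $j > 2m+i$''). The ``$k$'' in the lemma statement is evidently a slip for the fixed parameter ``$i$''. Your verification that the second index exceeds $2m+i$ is precisely the relevant bound; the error is the unwarranted passage from $l > 2m+i$ to $l > 2m+k$, which you should have flagged as inconsistent rather than asserting.

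A smaller but related imprecision: for $r < 2m-i+1$ the component $y_1 = X_{r,s}$ is a single elementary matrix, not a symplectic pair, so it does \emph{not} lie in $\mathfrak{n}^{-,a,i}$ and hence does not literally belong to $\mathfrak{n}^{-,a,i} \cap \mathrm{Lie}\,L(i)_{\mathbb Z}$. What is really meant, and what the subsequent corollary uses, is $y_1 = p(y) \in \mathrm{Lie}\,L(i)_{\mathbb Z}$, the projection onto $L(i)$. This too is a defect of the statement rather than of your computation, but your proof silently adopts the stronger (and unattainable) membership $y_1 \in \mathfrak{n}^{-,a,i}$ for the $r \ge 2m-i+1$ generators only, and switches interpretation in the other case without comment.
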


By the the previous lemma, the projection $p:(SL_{2(2m)})_{\mathbb{Z}}\rightarrow L(i)_{\mathbb{Z}}$ induces
an isomorphism of vector spaces
$\mathfrak{n}_{\mathbb{Z}}^{-,a, i}\simeq p(\mathfrak{n}_{\mathbb{Z}}^{-,a, i})$. Let us write
$\bar{\mathfrak{n}}_{\mathbb{Z}}^{-,a,i}$ for $p(\mathfrak{n}^{-,a, i})$.
 Since the Lie algebras are commutative, we see that
$p:\mathfrak{n}_{\mathbb{Z}}^{-,a,i}\rightarrow \bar{\mathfrak{n}}_{\mathbb{Z}}^{-,a,i}$
is not only an isomorphism of vector spaces, but it is in fact a Lie algebra
isomorphism.

\begin{cor}
$\tilde V_{\mathbb{Z}}(\Psi(\ell\omega_{i}))_{\bar\tau}=U_{\mathbb
Z}(\bar{\mathfrak{n}}^{-,a,i}).v_{\bar\tau}$.
\end{cor}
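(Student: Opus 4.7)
The display immediately preceding the corollary reads $\tilde V_{\mathbb Z}(\Psi(\ell\omega_i))_{\bar\tau} = U_{\mathbb Z}(\mathfrak{n}^{-,a,i}).v_{\bar\tau}$, so the content of the corollary is to replace $\mathfrak{n}^{-,a,i}$ by its projection $\bar{\mathfrak{n}}^{-,a,i} = p(\mathfrak{n}^{-,a,i})\subset \mathrm{Lie}\,L(i)$. By Lemma~\ref{lemma3.3}, every $y\in\mathfrak{n}^{-,a,i}$ decomposes uniquely as $y = y_1 + y_2$ with $y_1 = p(y) \in \bar{\mathfrak{n}}^{-,a,i}$ and $y_2$ a linear combination of matrix units $X_{k,l}$ with $l>2m+k$. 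The plan is therefore to show that the $y_2$-parts act trivially both on $v_{\bar\tau}$ and on every vector in $U_{\mathbb Z}(\bar{\mathfrak{n}}^{-,a,i}).v_{\bar\tau}$, which will imply that the actions of $\mathfrak{n}^{-,a,i}$ and $\bar{\mathfrak{n}}^{-,a,i}$ generate the same cyclic submodule from $v_{\bar\tau}$.

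The vanishing on $v_{\bar\tau}$ is an immediate consequence of the explicit formula in Lemma~\ref{3.1}: $v_{\bar\tau} = e_1\wedge\cdots\wedge e_i\wedge e_{2m+1}\wedge\cdots\wedge e_{2m+i}$. An elementary matrix $X_{k,l}$ contributes to the action only when $e_l$ is a factor of the wedge. The inequality $l>2m+k\ge 2m+1$ rules out $l\in\{1,\ldots,i\}$, leaving $l\in\{2m+1,\ldots,2m+i\}$; then $k < l-2m \le i$, so $e_k$ already appears in $v_{\bar\tau}$ and the wedge vanishes by antisymmetry. The same calculation shows that $X_{k,l}^2$ kills every wedge in $\Lambda^{2i}\mathbb C^{4m}$ (after one application the factor $e_l$ has been erased), so the divided powers $X_{k,l}^{(m)}$ also act as zero for every $m\ge 1$.

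To extend this annihilation to the whole Demazure module, note that $\bar{\mathfrak{n}}^{-,a,i}\subset \mathrm{Lie}\,L(i)$ acts on $v_{\bar\tau}$ by replacing some of the factors $e_s$, $s\in\{2m+1,\ldots,2m+i\}$, with factors $e_r$, $r\in\{i+1,\ldots,i+2m\}$, so the factors $e_1,\ldots,e_i$ persist in every vector $w\in U_{\mathbb Z}(\bar{\mathfrak{n}}^{-,a,i}).v_{\bar\tau}$. Consequently the case analysis of the previous paragraph still applies and $X_{k,l}.w=0$ whenever $l>2m+k$. Since $\mathfrak{n}^{-,a,i}$ and $\bar{\mathfrak{n}}^{-,a,i}$ are abelian, the vector space isomorphism $p$ extends to a $\mathbb Z$-algebra isomorphism $p_*:U_{\mathbb Z}(\mathfrak{n}^{-,a,i})\to U_{\mathbb Z}(\bar{\mathfrak{n}}^{-,a,i})$ of divided-power polynomial algebras; expanding $y^{(N)}=(y_1+y_2)^{(N)}$ by the binomial/multinomial formula and inducting on total degree, every term containing a positive power of a $y_2$-component vanishes on the running vector, yielding $u.v_{\bar\tau}=p_*(u).v_{\bar\tau}$ for every $u\in U_{\mathbb Z}(\mathfrak{n}^{-,a,i})$. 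This gives the desired equality of cyclic modules. The main technical obstacle is the careful integral bookkeeping of the divided-power expansion in the induction step; the remaining verifications are elementary combinatorial checks on wedge products.
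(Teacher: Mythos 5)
Your proof follows essentially the same route as the paper's: decompose $y = p(y) + y_2$ via Lemma~\ref{lemma3.3}, observe that the $y_2$-components annihilate the Demazure module, and conclude that the cyclic submodules generated by $\mathfrak{n}^{-,a,i}$ and $\bar{\mathfrak{n}}^{-,a,i}$ coincide. The paper is terse, leaving the annihilation claim to the preceding discussion of the Demazure module; you carry it out directly through wedge computations and a multinomial expansion inside the divided-power algebra, which is a perfectly sound way to make the argument explicit.

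One caveat worth flagging. The condition you invoke for $y_2$, namely $y_2\in\operatorname{span}\{X_{k,l}\mid l>2m+k\}$, is taken verbatim from the statement of Lemma~\ref{lemma3.3}, but it does not appear to describe the matrices actually produced by the decomposition: if $y = X_{r,s}+X_{4m-s+1,4m-r+1}$ with $r\le 2m-i$, then $y_2 = X_{4m-s+1,4m-r+1}$ has $l-k = s-r \le 2m-1 < 2m$, so $l\le 2m+k$, whereas $l = 4m-r+1 > 2m+i$. The paper's own proof of this corollary works with the condition $j>2m+i$, which does hold. Your wedge-product argument nonetheless rescues the situation: the observation that $X_{k,l}$ acts nontrivially on a wedge only when $e_l$ is present, combined with the fact that every index occurring in $U_{\mathbb Z}(\bar{\mathfrak n}^{-,a,i}).v_{\bar\tau}$ is at most $2m+i$, already kills $X_{k,l}$ whenever $l>2m+i$ — in fact more simply than the $l>2m+k$ case you analysed, because $e_l$ is never a factor at all. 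You should also note that the multinomial expansion of $(y_1+y_2)^{(N)}$ requires $[y_1,y_2]=0$ in $\mathfrak{sl}_{4m}$; this is a short direct check (the relevant index coincidences $s+s'=4m+1$ and $r+r'=4m+1$ cannot occur), but it is not a consequence of $\mathfrak{n}^{-,a,i}$ or $\bar{\mathfrak{n}}^{-,a,i}$ being abelian on their own.
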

\begin{proof} Let $y\in  \mathfrak{n}^{-,a, i}_{\mathbb Z}$. By Lemma~\ref{lemma3.3} we can write $y=p(y)+y_2$ with $y_2$ in the span of the matrices $X_{i,j}$ with
$j>2m+i$. Therefore $y_2$ acts trivially on $\tilde
V_{\mathbb{Z}}(\Psi(\ell\omega_{i}))_{\bar\tau}$ and we conclude
\[\tilde V_{\mathbb{Z}}(\Psi(\ell\omega_{i}))_{\bar\tau})=U_{\mathbb
Z}(\mathfrak{n}^{-,a,i}).v_{\bar\tau}=U_{\mathbb Z}(\bar{\mathfrak{n}}^{-,a,i}).v_{\bar\tau}.\]
\end{proof}

For $0\leq i\leq m-1$, let $Sp_{2m}(i)_{\mathbb Z}$ be
a copy of $(Sp_{2m})_{\mathbb Z}$ sitting inside $L(i)_{\mathbb Z}$,
defined with respect to the form given by the following matrix:
$$
\left(\begin{array}{cccc} 0 & J_{m-i} & 0 & 0 \\-J_{m-i} & 0 & 0 & 0 \\0 & 0 & 0
& J_{i} \\0 & 0 & -J_{i} & 0\end{array}\right),
$$
where $J_r$ denotes the $r\times r$ anti-diagonal matrix with entries
$(1,1,\ldots, 1)$. Moreover, denote  $(Sp^{\text{std}}_{2m})_{\mathbb Z}:=Sp_{2m}(0)_{\mathbb Z}$.

Let $\sigma$ be the permutation in $W^{L(i)}$ such that $\hbox{Lie\,}Sp_{2m}(i)_{\mathbb Z}=\sigma
\hbox{Lie\,}(Sp^{\text{std}}_{2m})_{\mathbb Z}\sigma^{-1}$. Then the permutation $\sigma$ is:
keep $2m+1, \ldots,  2m+i$ unchanged and move  $2m-i+1,  \ldots,  2m $ in front of $i+1  \ldots  2m-i$, so that
\begin{eqnarray}\label{sigma}
 \sigma(e_1\wedge e_2\wedge\ldots\wedge e_{i}\wedge
e_{2m+1}\wedge e_{2m+2}\wedge\ldots\wedge e_{2m+i})=&&\\\nonumber e_1\wedge e_2\wedge\ldots\wedge e_{i}\wedge
e_{2m+1}\wedge e_{2m+2}\wedge\ldots\wedge e_{2m+i}.&&
\end{eqnarray}
Let $\mathfrak b_{\mathbb Z}\subset \hbox{Lie\,}(Sp^{\text{std}}_{2m})_{\mathbb Z}$ be the Borel subalgebra of upper triangular matrices.
Let $\mathfrak p^i_{\mathbb Z}\subset \hbox{Lie\,}(Sp^{\text{std}}_{2m})_{\mathbb Z}$ be the maximal parabolic subalgebra associated with
$\varpi_i$, and let $\mathfrak p_\mathbb{Z}^{i,n}$ be its nilpotent radical. Write
$S\mathfrak p_\mathbb{Z}^{i,n}$ for $\sigma \mathfrak p_\mathbb{Z}^{i, n} \sigma^{-1}\subset \hbox{Lie\,}Sp_{2m}(i)_{\mathbb Z}$.

\begin{lem}
$V_{\mathbb Z}(\ell\varpi_i)=U_\mathbb{Z}(S\mathfrak p^{i,n}).v_{\bar\tau}.$
\end{lem}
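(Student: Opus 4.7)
The plan is to interpret $V_{\mathbb Z}(\ell\varpi_i)$ as the Weyl module for the symplectic subgroup $Sp_{2m}(i)\subset L(i)$ of highest weight $\ell\varpi_i$ (in analogy with the $SL$-case Lemma~\ref{coro2.7}), to identify $v_{\bar\tau}$ with its lowest weight vector, and then to establish the claim by standard Weyl-module cyclicity after a $\sigma$-conjugation reduction to the standard symplectic group. By Lemma~\ref{3.1} (and its natural extension to $\ell>1$), in the $L(i)$-local basis $v_{\bar\tau}$ corresponds to $e'_{2m-i+1}\wedge\cdots\wedge e'_{2m}$ (or its $\ell$-fold analogue in $V^{L(i)}(\ell\varpi_i)$); this vector has $Sp^{\text{std}}_{2m}$-weight $-\ell\varpi_i$ and is therefore the lowest weight vector of $V^{Sp^{\text{std}}_{2m}}_{\mathbb Z}(\ell\varpi_i)$. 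Since equation~\eqref{sigma} gives $\dot\sigma v_{\bar\tau}=\pm v_{\bar\tau}$, and $\sigma$ conjugates $Sp^{\text{std}}_{2m}$ onto $Sp_{2m}(i)$, the vector $v_{\bar\tau}$ is simultaneously the lowest weight vector of $V_{\mathbb Z}(\ell\varpi_i)=V^{Sp_{2m}(i)}_{\mathbb Z}(\ell\varpi_i)$.

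The $\sigma$-conjugation step is then immediate: using $S\mathfrak p^{i,n}=\sigma\mathfrak p^{i,n}\sigma^{-1}$ and $\dot\sigma v_{\bar\tau}=\pm v_{\bar\tau}$ one gets $U_{\mathbb Z}(S\mathfrak p^{i,n})\cdot v_{\bar\tau}=\dot\sigma\cdot U_{\mathbb Z}(\mathfrak p^{i,n})\cdot v_{\bar\tau}$. As $\dot\sigma$ is an isomorphism of $V^{L(i)}_{\mathbb Z}(\ell\varpi_i)$ that sends $V^{Sp^{\text{std}}_{2m}}_{\mathbb Z}(\ell\varpi_i)$ onto $V_{\mathbb Z}(\ell\varpi_i)$, it suffices to show $V^{Sp^{\text{std}}_{2m}}_{\mathbb Z}(\ell\varpi_i)=U_{\mathbb Z}(\mathfrak p^{i,n})\cdot v_{\bar\tau}$.

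The remaining step is the standard Weyl-module cyclicity argument for the nilpotent radical of a parabolic. The Weyl module is cyclic under $U_{\mathbb Z}(\mathfrak n^+)$ from $v_{\bar\tau}$, and $\mathfrak n^+=\mathfrak p^{i,n}\oplus\mathfrak n^+_{\mathfrak l}$ where $\mathfrak l$ is the Levi of the maximal parabolic $P_i$. The key point is that $\mathfrak n^+_{\mathfrak l}$ annihilates $v_{\bar\tau}$: a positive Levi root $\beta$ lies in the span of $\{\alpha_j:j\ne i\}$, and $(\alpha_j,\varpi_i)=0$ for $j\ne i$ (since $\varpi_i$ is Killing-dual to $\alpha_i$), so $(\beta,\varpi_i)=0$ and hence $v_{\bar\tau}$ has $\mathfrak{sl}_2^\beta$-weight zero. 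Combined with $F_\beta v_{\bar\tau}=0$ (as a lowest weight vector), this forces $v_{\bar\tau}$ into a trivial $\mathfrak{sl}_2^\beta$-submodule, so $E_\beta v_{\bar\tau}=0$; the $\mathbb Z$-freeness of the Weyl module together with the divided-power formalism then yields $E_\beta^{(k)}v_{\bar\tau}=0$ integrally for every $k\ge 1$. Integral PBW $U_{\mathbb Z}(\mathfrak n^+)=U_{\mathbb Z}(\mathfrak p^{i,n})\cdot U_{\mathbb Z}(\mathfrak n^+_{\mathfrak l})$ then gives $U_{\mathbb Z}(\mathfrak n^+)v_{\bar\tau}=U_{\mathbb Z}(\mathfrak p^{i,n})v_{\bar\tau}$, which is what is needed.

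The main obstacle I expect is ensuring the integrality: verifying that $U_{\mathbb Z}(\mathfrak p^{i,n})v_{\bar\tau}$ really spans the full integral lattice $V^{Sp^{\text{std}}_{2m}}_{\mathbb Z}(\ell\varpi_i)$ rather than only its rational span. This reduces to the integral divided-power annihilation $E^{(k)}_\beta v_{\bar\tau}=0$ for Levi roots $\beta$, which is routine once one knows $E_\beta v_{\bar\tau}=0$ over $\mathbb Q$ and that the integral Weyl module sits $\mathbb Z$-freely inside the rational one.
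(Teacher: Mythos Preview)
Your proof is correct and follows essentially the same route as the paper's: identify $v_{\bar\tau}$ as a lowest weight vector for the symplectic subgroup, use cyclicity of the Weyl module under the Borel from a lowest weight vector, and then pass from the Borel to the nilpotent radical $S\mathfrak p^{i,n}$ since the Levi part of the parabolic kills $v_{\bar\tau}$. The only cosmetic difference is that you first $\sigma$-conjugate back to $Sp^{\text{std}}_{2m}$ and carry out the argument there, whereas the paper works directly in $Sp_{2m}(i)$ with its conjugated Borel $\sigma\mathfrak b\sigma^{-1}$; your version is more explicit about the Levi annihilation and the integrality of the divided-power vanishing, while the paper simply asserts the chain $U_{\mathbb Z}(\mathrm{Lie}\,Sp_{2m}(i)).v_{\bar\tau}= U_{\mathbb Z}(\sigma\mathfrak b\sigma^{-1}).v_{\bar\tau}= U_{\mathbb Z}(S\mathfrak p^{i,n}).v_{\bar\tau}$.

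One small wording correction: the claim ``$(\alpha_j,\varpi_i)=0$ for $j\neq i$ since $\varpi_i$ is Killing-dual to $\alpha_i$'' is slightly off as stated (it is $\langle\alpha_j^\vee,\varpi_i\rangle=\delta_{ij}$ that defines $\varpi_i$), but your conclusion that $(\beta,\varpi_i)=0$ for any Levi root $\beta$ is still valid, because $\langle\alpha_j^\vee,\varpi_i\rangle=0$ for $j\neq i$ forces $(\alpha_j,\varpi_i)=0$, and $\beta$ lies in the $\mathbb Z$-span of $\{\alpha_j:j\neq i\}$.
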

\begin{proof}
By \eqref{sigma}, $v_{\bar\tau}$ is a lowest weight vector for $Sp_{2m}(i)$, as well as for $L(i)$, and we have that
the module generated by this vector is $U_{\mathbb Z}(\hbox{Lie\,}Sp_{2m}(i)).v_{\bar\tau}=V_\mathbb{Z}(\ell\varpi_{i})$.
Since it is generated by a lowest weight vector, it is enough to consider the action of the nilpotent radical
\[
U_\mathbb{Z}(\hbox{Lie\,}Sp_{2m}(i)).v_{\bar\tau}= U_\mathbb{Z}(\sigma\mathfrak b\sigma^{-1}).v_{\bar\tau}=
U_{\mathbb Z}(S\mathfrak p^{i,n}).v_{\bar\tau}.
\]
\end{proof}

Observe that since  $ \bar{\mathfrak{n}}_{\mathbb{Z}}^{-,a,i}\subseteq S\mathfrak p_\mathbb{Z}^{i,n}$, the Weyl module $V_{\mathbb Z}(\ell\varpi_i)$ is naturally equipped with a structure of $ \bar{\mathfrak{n}}^{-,a,i}$-module. It is easy to check that:

\begin{lem}\label{lemma3.6}
 Every element $x\in S\mathfrak p_\mathbb{Z}^{i,n}$ can be written in a unique way as $x=x_1+x_2$, with
 $x_1\in \bar{\mathfrak{n}}^{-,a,i}$, $x_2\in  span\ \{ X_{k,l}\mid 2m-i< k\le 2m, \ i+1<l<2m\}$.
 Moreover, $x_2$ is uniquely determined by $x_1$.
\end{lem}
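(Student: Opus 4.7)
The plan follows the template of Lemma~\ref{lemma3.3}: I would exhibit an explicit basis of $S\mathfrak p^{i,n}_\ZZ$ whose members decompose transparently as $x_1+x_2$, and then read off both the unique decomposition and the moreover clause.

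First, describe $\mathfrak p^{i,n}_\ZZ$ as the $\ZZ$-span of the Chevalley root vectors of $(\mathfrak{sp}^{\text{std}}_{2m})_\ZZ$ corresponding to positive roots with nonzero coefficient at the simple root $\alpha_i$. Under the standard anti-diagonal symplectic form each such vector is either a single matrix unit $E_{a,b}$ (the case of a long root $2\epsilon_p$, $p\le i$) or a signed pair $E_{a,b}\pm E_{2m+1-b,\,2m+1-a}$ (the remaining roots), the second summand being forced by symplecticity. Conjugating by $\sigma$, and using that for a permutation $\sigma E_{a,b}\sigma^{-1}=E_{\sigma(a),\sigma(b)}$, produces a generating set for $S\mathfrak p^{i,n}_\ZZ$ consisting of elements of the form $X_{r,s}\pm X_{k,l}$ or $X_{r,s}$ in $L(i)$.

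The second step is to inspect the row and column indices of each such generator. Using that $\sigma$ fixes the block of indices $\{2m+1,\ldots,2m+i\}$ and only shuffles $\{i+1,\ldots,2m\}$, one checks case by case that in each generator the summand whose column index was originally in $\{2m+1,\ldots,2m+i\}$ lands in the upper-right block indexing $\bar{\mathfrak n}^{-,a,i}$, while the other summand (when present) has row index forced into $\{2m-i+1,\ldots,2m\}$ and column index in the prescribed middle range. Setting $x_1$ equal to the former contribution and $x_2$ to the latter, and extending $\ZZ$-linearly, yields the claimed decomposition $x=x_1+x_2$; uniqueness follows because $\bar{\mathfrak n}^{-,a,i}$ and the middle span are complementary subspaces of $L(i)$.

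The moreover clause is then immediate: since the pairing between the two summands of each generator is prescribed by the symplectic form recorded in the first step, the map $x_1\mapsto x_2$ is a well-defined $\ZZ$-linear map. The main obstacle is the careful index bookkeeping in the case-by-case verification under $\sigma$-conjugation (in particular identifying the exact $k,l$ ranges for the middle summand); this is a finite combinatorial check, entirely analogous to the argument for Lemma~\ref{lemma3.3}.
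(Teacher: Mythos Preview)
Your proposal is correct and is precisely the explicit verification the paper leaves implicit: the paper states Lemma~\ref{lemma3.6} with the preface ``It is easy to check that'' and gives no further argument. Your outline---writing down the symplectic root vectors of $\mathfrak p^{i,n}_\ZZ$ as signed pairs of matrix units, conjugating by the permutation $\sigma$, and tracking the resulting row and column indices---is exactly how one carries out that check, in complete analogy with Lemma~\ref{lemma3.3}.
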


As in Section~\ref{SectionFundSL}, we consider the Chevalley involution $\iota: \mathfrak{sp}_{2m}\rightarrow \mathfrak{sp}_{2m}$ with $\iota\vert_\mathfrak{h}=-1$ and such that $\iota$ exchanges $e_\alpha$ and $-f_\alpha$. It induces an isomorphism $\iota:S^\bullet_{\mathbb Z}(\mathfrak{n}^-) \rightarrow S^\bullet_{\mathbb Z}(\mathfrak{n}^+)$.

For a dominant weight $\lambda$, fix a highest weight vector $v_\lambda\in V_{\mathbb Z}(\lambda)$
and a lowest weight vector $v_{w_0}\in V_{\mathbb Z}(\lambda)$, where $w_0$ is the longest word in the Weyl group
of $\mathfrak{sp}_{2m}$. Recall that considering the PBW filtration on  $U_{\mathbb Z}(\mathfrak{n}^{-})$ and  on 
$U_{\mathbb Z}(\mathfrak{n}^{+})$ provides  $V_{\mathbb Z}(\lambda)$ with two possible 
$S^\bullet_{\mathbb Z}(\mathfrak{n}^{-,a})$-structures: in the first case looking at the PBW filtration on 
$V_{\mathbb Z}(\lambda)$ induced by the action of  $U_{\mathbb Z}(\mathfrak{n}^{-})$  on the highest 
weight vector and taking the associated graded space provides the abelianized module $V^a_{\mathbb Z}(\lambda)$, 
while in the second case  looking at the PBW filtration on $V_{\mathbb Z}(\lambda)$  induced by the action of  
$U_{\mathbb Z}(\mathfrak{n}^{+})$  on the  lowest weight vector and taking the associated graded space
produces a module that we denote by $V^{a,+}_{\mathbb Z}(\lambda)$. Now via $\iota$ this module
also becomes naturally a $S^\bullet_{\mathbb Z}(\mathfrak{n}^-)$-module and Lemma \ref{Chevalleytwist} holds  in the symplectic case too:

 \begin{lem}\label{ChevalleytwistSp}
The  $S^\bullet_{\mathbb Z}(\mathfrak{n}^-)$-module $V^{a,+}_{\mathbb Z}(\lambda)$ is, as
$S^\bullet_{\mathbb Z}(\mathfrak{n}^-)$-module, isomorphic to $V^{a}_{\mathbb Z}(\lambda)$.
\end{lem}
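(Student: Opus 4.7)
The plan is to imitate the proof of Lemma~\ref{Chevalleytwist} verbatim, noting the one crucial difference between the $SL_n$ and $Sp_{2m}$ settings: for $\mathfrak{sp}_{2m}$ the longest element $w_0$ of the Weyl group acts as $-\mathrm{id}$ on $\mathfrak{h}^\ast$, so that $\lambda^\ast := -w_0(\lambda) = \lambda$ for every dominant weight. This is precisely the reason why in the symplectic case the ``duality twist'' disappears from the statement.

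First I would recall the construction. Fix a highest weight vector $v_\lambda \in V_{\mathbb Z}(\lambda)$ and a lowest weight vector $v_{w_0} \in V_{\mathbb Z}(\lambda)$. The module $V^{a,+}_{\mathbb Z}(\lambda)$ is obtained by applying the PBW filtration of $U_{\mathbb Z}(\mathfrak n^+)$ to the cyclic generator $v_{w_0}$ and passing to the associated graded module, which naturally carries an action of $S^\bullet_{\mathbb Z}(\mathfrak n^+)$; the Chevalley involution $\iota$ then transports this to an $S^\bullet_{\mathbb Z}(\mathfrak n^-)$-module structure. The target module $V^a_{\mathbb Z}(\lambda)$ is the analogous construction applied to $v_\lambda$ with $U_{\mathbb Z}(\mathfrak n^-)$ in place of $U_{\mathbb Z}(\mathfrak n^+)$.

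Next I would twist the $\mathfrak{sp}_{2m}$-action on $V_{\mathbb Z}(\lambda)$ through $\iota$. Since $\iota$ exchanges $\mathfrak n^+$ and $\mathfrak n^-$ and acts as $-1$ on $\mathfrak h$, the twisted representation has $v_{w_0}$ as a cyclic vector for $U_{\mathbb Z}(\mathfrak n^-)$, and the $\mathfrak h$-weight of $v_{w_0}$ becomes $-w_0(\lambda)$. In the symplectic case $-w_0(\lambda) = \lambda$, hence after the twist $v_{w_0}$ is a highest weight vector of weight $\lambda$ generating a copy of the Weyl module $V_{\mathbb Z}(\lambda)$ as a $U_{\mathbb Z}(\mathfrak{sp}_{2m})$-module. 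Because $\iota$ intertwines the PBW filtration of $U_{\mathbb Z}(\mathfrak n^+)$ (acting on $v_{w_0}$) with that of $U_{\mathbb Z}(\mathfrak n^-)$ (acting on the twisted highest weight vector), passing to the associated graded spaces gives an $S^\bullet_{\mathbb Z}(\mathfrak n^-)$-module isomorphism $V^{a,+}_{\mathbb Z}(\lambda) \simeq V^a_{\mathbb Z}(\lambda)$.

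I do not expect any real obstacle; the only thing worth checking carefully is the weight computation $-w_0(\lambda) = \lambda$, which holds in types $\tt C$ (and would fail in type $\tt A$, accounting for the $\lambda \leftrightarrow \lambda^\ast$ swap in Lemma~\ref{Chevalleytwist}). Everything else, in particular compatibility of $\iota$ with the PBW filtrations and with base change to $\mathbb Z$, is formally identical to the argument already given for $SL_n$ and can be carried over with only notational changes.
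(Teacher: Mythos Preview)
Your proposal is correct and follows exactly the paper's approach: the paper does not give a separate proof but simply observes that Lemma~\ref{Chevalleytwist} holds verbatim in the symplectic case, noting afterwards that ``in the symplectic case there is no need of replacing $\lambda$ by $\lambda^*$, since they coincide.'' Your write-up is a faithful (and somewhat more detailed) spelling-out of precisely that argument, including the key point $-w_0(\lambda)=\lambda$ in type $\tt C$.
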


Observe that in the symplectic case there is no need of replacing $\lambda$ by $\lambda^*$, since they coincide.

\begin{lem}\label{lemma3.7} The Demazure module $\tilde
V_{\mathbb{Z}}(\Psi(\ell\omega_{i}))_{\bar\tau}$ contained in
$\tilde V_{\mathbb{Z}}(\Psi(\ell\omega_{i}))$ and $V_{\mathbb Z}(\ell
\omega_i)^a$ are isomorphic as $S^\bullet_{\mathbb Z}(\mathfrak n^{-})$-modules.
\end{lem}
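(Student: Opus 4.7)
The proof mirrors Proposition~\ref{proposition2.7}, adapted to the symplectic setting. The first step is to combine the preceding Corollary (stating $\tilde V_\mathbb{Z}(\Psi(\ell\omega_i))_{\bar\tau}=U_\mathbb{Z}(\bar{\mathfrak n}^{-,a,i}).v_{\bar\tau}$) with the preceding Lemma (stating $V_\mathbb{Z}(\ell\varpi_i)=U_\mathbb{Z}(S\mathfrak p^{i,n}).v_{\bar\tau}$), and with the earlier identification $\tilde V_\mathbb{Z}(\Psi(\ell\omega_i))_{\bar\tau}\simeq V^{L(i)}_\mathbb{Z}(\ell\varpi_i)$. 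Since $v_{\bar\tau}$ is simultaneously a lowest weight vector for $L(i)$ and for $Sp_{2m}(i)\simeq Sp_{2m}$, we conclude that the Demazure module coincides with the $Sp_{2m}(i)$-Weyl module $V^{Sp_{2m}(i)}_\mathbb{Z}(\ell\omega_i)$, generated from $v_{\bar\tau}$ by the commutative Lie subalgebra $\bar{\mathfrak n}^{-,a,i}\subset S\mathfrak p^{i,n}\subset\mathrm{Lie}\,Sp_{2m}(i)$.

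Next, I would invoke Lemma~\ref{ChevalleytwistSp} to reduce the assertion to identifying $\tilde V_\mathbb{Z}(\Psi(\ell\omega_i))_{\bar\tau}$ with $V^{a,+}_\mathbb{Z}(\ell\omega_i)$ as $S^\bullet_\mathbb{Z}(\mathfrak n^{+,a})$-modules, the $S^\bullet_\mathbb{Z}(\mathfrak n^{+,a})$-structure on the Demazure module being transported from its natural $S^\bullet_\mathbb{Z}(\mathfrak n^{-,a})$-structure via the Chevalley involution $\iota$. By Lemma~\ref{5.4}, on the Demazure side only root vectors indexed by $\mathfrak n^{-,a,i}$ (equivalently, by $\bar{\mathfrak n}^{-,a,i}$ via the isomorphism $p$) act nontrivially on $v_{\bar\tau}$, while on the $V^{a,+}_\mathbb{Z}(\ell\omega_i)$ side only the $f_\alpha$ with $\alpha\in\mathfrak p^{i,n}$ (the nilpotent radical of the maximal parabolic of $Sp_{2m}$ at $\omega_i$) act nontrivially on the lowest weight vector. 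The bijection in Lemma~\ref{5.4}(ii) matches these two sets of roots, and together with Lemma~\ref{lemma3.6} this identifies $\bar{\mathfrak n}^{-,a,i}$ with the abelianized $\mathfrak p^{i,n}$.

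The crux is to show that the two actions agree under these identifications. The key observation is that $\bar{\mathfrak n}^{-,a,i}$ is commutative already as a Lie subalgebra of $S\mathfrak p^{i,n}$, so its action on $V^{Sp_{2m}(i)}_\mathbb{Z}(\ell\omega_i)$ is unaffected by passing to the PBW-gradation $V^{a,+,Sp_{2m}(i)}_\mathbb{Z}(\ell\omega_i)=V^{a,+}_\mathbb{Z}(\ell\omega_i)$: the commuting factors in any PBW-ordered monomial are preserved by the associated graded map. Consequently both modules are cyclic over $S^\bullet_\mathbb{Z}(\bar{\mathfrak n}^{-,a,i})$, generated by matching lowest weight vectors, with matching annihilators. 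A dimension comparison (both $\mathbb Z$-free of rank equal to $\mathrm{rank}\,V_\mathbb{Z}(\ell\omega_i)$, as the Demazure module is a direct summand of a free $\mathbb Z$-module and equals the $Sp_{2m}(i)$-Weyl module) then promotes the natural surjection into an isomorphism.

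The main obstacle, relative to the $SL_n$-case, is that $\omega_i$ is not cominuscule for $Sp_{2m}$ when $i\ge 2$, so the PBW filtration on $V_\mathbb{Z}(\ell\omega_i)$ is not itself a grading and one cannot directly argue as in Proposition~\ref{proposition2.7} using cominuscularity of $\omega_i$. The workaround is to exploit cominuscularity on the \emph{Levi} side---$\varpi_i$ is cominuscule for $L(i)=SL_{2m}$---combined with the fact that $\bar{\mathfrak n}^{-,a,i}$ is commutative \emph{inside} the (possibly non-abelian) nilpotent radical $S\mathfrak p^{i,n}$ of $Sp_{2m}(i)$. The delicate verification is precisely that this commutativity inside $S\mathfrak p^{i,n}$, rather than of $S\mathfrak p^{i,n}$ itself, suffices to make the abelianization procedure compatible with the identification of Demazure and $V^a$-modules.
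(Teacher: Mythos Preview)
Your outline tracks the paper's argument closely and you correctly pinpoint the obstacle: $\omega_i$ is not cominuscule for $Sp_{2m}$, so unlike Proposition~\ref{proposition2.7} one cannot simply say ``the PBW filtration is already a grading''. However, your proposed workaround has a genuine gap.

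You assert that commutativity of $\bar{\mathfrak n}^{-,a,i}$ inside $S\mathfrak p^{i,n}$ implies that its action on $V_{\mathbb Z}(\ell\omega_i)$ is ``unaffected by passing to the PBW-gradation''. This does not follow. The PBW filtration defining $V^{a,+}_{\mathbb Z}(\ell\omega_i)$ is taken with respect to the \emph{full} (non-abelian) nilpotent radical $S\mathfrak p^{i,n}$, so one has two filtrations
\[
V'_s:=U_{\mathbb Z}(\bar{\mathfrak n}^{-,a,i})_{\le s}.v_{\bar\tau}\subseteq
V_s:=U_{\mathbb Z}(S\mathfrak p^{i,n})_{\le s}.v_{\bar\tau}.
\]
Commutativity of $\bar{\mathfrak n}^{-,a,i}$ gives only the inclusion $V'_s\subseteq V_s$; it says nothing about equality. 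If the inclusion is strict for some $s$, then $\text{gr}\,V$ need not be generated over $S^\bullet_{\mathbb Z}(\bar{\mathfrak n}^{-,a,i})$ by $[v_{\bar\tau}]$, so your claim that ``both modules are cyclic over $S^\bullet_{\mathbb Z}(\bar{\mathfrak n}^{-,a,i})$'' is unjustified on the $V^{a,+}$ side, and the dimension comparison has no surjection to upgrade. Appealing to cominuscularity of $\varpi_i$ for $L(i)$ does not help here: that controls the $L(i)$-filtration, not the $Sp_{2m}(i)$-filtration that actually defines $V^{a,+}_{\mathbb Z}(\ell\omega_i)$.

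The paper resolves exactly this point by invoking \cite[Proposition~3.1]{FFiL}, which establishes that the two PBW filtrations above do coincide; this is the symplectic replacement for the cominuscule shortcut. With that input the rest of your argument (Lemma~\ref{lemma3.6}, the trivial action of the complement $x_2$ on $v_{\bar\tau}$, and Lemma~\ref{ChevalleytwistSp}) goes through as you describe. What is missing from your proposal is either a citation of this compatibility result or an independent proof of $V_s=V'_s$, for instance by checking that the complement of $\bar{\mathfrak n}^{-,a,i}$ inside $S\mathfrak p^{i,n}$ behaves like an ideal modulo the annihilator of $v_{\bar\tau}$.
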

\begin{proof}By Lemma~\ref{lemma3.6}, the projection $q: S\mathfrak p_\mathbb{Z}^{i,n} \rightarrow \bar{\mathfrak{n}}^{-,a,i}$ is an isomorphism of vector spaces. Moreover, if we write  $x\in S\mathfrak p_{\mathbb{Z}}^{i,n}$ as $x=q(x)+x_2$, then $x_2$ lies in the span of   the matrices $X_{i,j}$ with
 $i+1<j<2m$ and hence $x_2.v_{\bar\tau}=0$. Now, by \cite[Proposition 3.1]{FFiL}, the PBW filtrations on $V_{\mathbb Z}(\ell \omega_i)$ with respect to the action of $S\mathfrak p_\mathbb{Z}^{i,n}$ and $\bar{\mathfrak n}_{\mathbb{Z}}^{-,a,i}$ are compatible and we get
\[gr V_{\mathbb Z}(\ell\varpi_i)=gr U_\mathbb{Z}(S\mathfrak{p}_{\mathbb Z}^{i,n}).v_{\bar\tau}\simeq gr U_\mathbb{Z}(\bar{\mathfrak{n}}^{-,a,i}).v_{\bar\tau}.\]
On the other hand, when we consider in  $\tilde V_{\mathbb{Z}}(\Psi(\ell\omega_{i}))_{\bar\tau}$ the PBW filtration with respect to the action of $S\mathfrak{p}_{\mathbb Z}^{i,n}$ and go to the associated graded module,
then the action of $(S\mathfrak{p}_{\mathbb Z}^{i,n})^a$ is isomorphic to the action of $\bar{\mathfrak{n}}_{\mathbb{Z}}^{-,a,i}$ on  $\tilde V_{\mathbb{Z}}(\Psi(\ell\omega_{i}))_{\bar\tau}$.
\end{proof}

The previous result implies in particular:
\begin{cor}\label{coro3.5}
$\text{rank\,}\tilde V_{\mathbb{Z}}(\Psi(\ell\omega_{i}))_{\bar
\tau}=\text{rank\,}V_{\mathbb Z}(\ell\omega_{i})$.
\end{cor}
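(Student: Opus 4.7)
The plan is to deduce this essentially immediately from Lemma~\ref{lemma3.7}, relying only on the elementary fact that the $\mathbb{Z}$-rank of a finitely generated $\mathbb{Z}$-module is additive along any finite filtration by finitely generated submodules. I would proceed in three short steps.

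First, invoke Lemma~\ref{lemma3.7}, which provides an isomorphism of $S^\bullet_{\mathbb Z}(\mathfrak n^-)$-modules
\[
\tilde V_{\mathbb Z}(\Psi(\ell\omega_i))_{\bar\tau}\;\simeq\;V^a_{\mathbb Z}(\ell\omega_i).
\]
In particular the two sides are isomorphic as $\mathbb Z$-modules, so their ranks coincide.

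Second, recall that by definition $V^a_{\mathbb Z}(\ell\omega_i)=\mathrm{gr}\,V_{\mathbb Z}(\ell\omega_i)$ with respect to the PBW filtration, which is a finite chain of finitely generated $\mathbb Z$-submodules of the $\mathbb Z$-free Weyl module $V_{\mathbb Z}(\ell\omega_i)$. Since $\mathbb Z$-rank (equivalently, $\mathbb Q$-dimension after tensoring with $\mathbb Q$) is additive on short exact sequences of finitely generated $\mathbb Z$-modules, summing the ranks of the successive quotients $F_k/F_{k-1}$ gives
\[
\mathrm{rank}\,V^a_{\mathbb Z}(\ell\omega_i)\;=\;\mathrm{rank}\,V_{\mathbb Z}(\ell\omega_i).
\]
Combining this with the first step yields the desired equality.

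There is no real obstacle to overcome here: the non-trivial content has already been absorbed into Lemma~\ref{lemma3.7}, whose proof in turn rests on the Chevalley twist (Lemma~\ref{ChevalleytwistSp}) and the compatibility of the two PBW filtrations of $V_{\mathbb Z}(\ell\varpi_i)$ with respect to $S\mathfrak p^{i,n}_{\mathbb Z}$ and $\bar{\mathfrak n}^{-,a,i}_{\mathbb Z}$ supplied by \cite[Proposition 3.1]{FFiL}. An alternative route would be to identify both sides with the Weyl module of the Levi subgroup $L(i)\simeq SL_{2m}$ of highest weight $\ell\varpi_i$ and then apply Weyl's dimension formula, paralleling Corollary~\ref{coro2.5}; however the filtration argument above is shorter and keeps everything intrinsically over $\mathbb Z$.
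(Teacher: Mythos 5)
Your proof is correct and takes the same route as the paper, which simply states the corollary as an immediate consequence of Lemma~\ref{lemma3.7}; the rank-additivity argument you spell out for passing from $V_{\mathbb Z}(\ell\omega_i)$ to its associated graded $V^a_{\mathbb Z}(\ell\omega_i)$ is exactly the implicit step. One caveat on the aside you add at the end: the alternative route of identifying both sides with the $L(i)\simeq SL_{2m}$-Weyl module $V^{L(i)}_{\mathbb Z}(\ell\varpi_i)$ would not work as stated, since that identification is only available for the $SL_{4m}$-Demazure module, not the (smaller) $Sp_{4m}$-Demazure module $\tilde V_{\mathbb Z}(\Psi(\ell\omega_i))_{\bar\tau}$, and $V_{\mathbb Z}(\ell\omega_i)$ is a Weyl module for $Sp_{2m}$, whose rank differs in general from that of the $SL_{2m}$-Weyl module $V^{L(i)}_{\mathbb Z}(\ell\varpi_i)$; the paper bridges these only via the intermediate subgroup $Sp_{2m}(i)$ and the compatibility of PBW filtrations, which is precisely the content of Lemma~\ref{lemma3.7} that your main argument correctly relies on.
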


\section{The general case for $\mathfrak{sp}_{2m}$}We come now to the general case (notations as in Example \ref{example2}):
\begin{thm}\label{ThmIsoSp2m} Let $\lambda$ be a dominant $\mathfrak{sp}_{2m}$-weight. The
Demazure submodule
$\tilde V_{\mathbb Z}(\Psi(\lambda))_{\bar\tau}$ of the
$(Sp_{2(2m)})_{\mathbb{Z}}$-module $\tilde V_{\mathbb Z}(\Psi(\lambda))$ is,
as an $N_{\mathbb{Z},\eta}^{-,a}$-module, isomorphic to the abelianized module
$V_{\mathbb Z}^a(\lambda)$.
\end{thm}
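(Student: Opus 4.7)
The plan is to mirror, step by step, the argument already carried out in Section~\ref{PROOF} for type $\mathtt{A}$, adapting the combinatorics to the symplectic root system. Because in type $\mathtt{C}$ one has $\lambda=\lambda^\ast$, no Chevalley dualization is needed. Concretely, I would set up the symplectic analogue of the diagram in Section~\ref{strategy}, whose corners are $S^\bullet_{\mathbb Z}(\mathfrak n^{-,a}_\eta)/I_{\mathbb Z}(\lambda)\simeq V^a_{\mathbb Z}(\lambda)$ on the top left, the Cartan component of $\bigotimes_{i=1}^m V^a_{\mathbb Z}(a_i\omega_i)$ on the top right, the Demazure module $\tilde V_{\mathbb Z}(\Psi(\lambda))_{\bar\tau}$ on the bottom left, and the Cartan component of $\bigotimes_{i=1}^m \tilde V_{\mathbb Z}(a_i\Psi(\omega_i))_{\bar\tau}$ on the bottom right. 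The right vertical arrow is the tensor product of the isomorphisms supplied by Lemma~\ref{lemma3.7}, and the bottom horizontal injection into the Cartan component is provided by standard monomial theory / Frobenius splitting for type $\mathtt{C}$ Demazure modules. As in the $\mathtt{A}$-case, the remaining task is to exhibit a surjection $f$ from the top left onto the bottom left by proving the inclusion $M_{\mathbb Z}(\lambda)\subseteq I_{\mathbb Z}(\lambda)$, where $M_{\mathbb Z}(\lambda)$ is the annihilator of $v_{\bar\tau}$ and $I_{\mathbb Z}(\lambda)$ the annihilator of the image of $v_\lambda$.

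Next, I would apply Joseph--Mathieu--Polo (\cite[Lemme~26]{M}) to describe $\tilde I_{\mathbb Z}(\bar\tau\Psi(\lambda))\subset U_{\mathbb Z}(\tilde{\mathfrak n}^+)$, and classify the positive roots of $Sp_{2(2m)}$ into three families exactly as in Section~4: roots of \emph{first type}, i.e.\ those of the form $\phi(\alpha)$ for a positive $Sp_{2m}$-root $\alpha$ (whose root spaces span $\mathfrak n^{-,a}_\eta$); roots of \emph{second type}, which split as $\mathfrak b^1\oplus\mathfrak b^2\subset \mathfrak{sp}_{4m}$ (two copies of $\mathfrak b$ glued by the symplectic form); and roots of \emph{third type} with the property that $[E_{\tilde\gamma},E_{\tilde\beta}]$ is either zero or of third type again. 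Lemma~\ref{5.4} is the exact analogue of Lemma~\ref{2.4} that one needs at this stage, and a PBW-basis argument identical to the one in Section~\ref{generatordemzureideal} gives the decomposition $U_{\mathbb Z}(\tilde{\mathfrak n}^+)= S^\bullet_{\mathbb Z}(\mathfrak n^{-,a}_\eta)\oplus \tilde I_{\mathbb Z}(\infty)$ and thus the presentation
\[
M_{\mathbb Z}(\lambda)=S^\bullet_{\mathbb Z}(\mathfrak n^{-,a}_\eta)\cdot \bigl\langle U_{\mathbb Z}(\mathfrak b^1\oplus\mathfrak b^2)\circ E_{\tilde\alpha}^{(-\langle \tilde\alpha^\vee,\bar\tau\Psi(\lambda)\rangle+k)} \mid \tilde\alpha \text{ of first type},\ k\ge1 \bigr\rangle_{\mathbb Z}.
\]

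The heart of the proof, and its main obstacle, will be the type-$\mathtt C$ version of Lemma~\ref{deltabandb}: to replace the action of $U_{\mathbb Z}(\mathfrak b^1\oplus \mathfrak b^2)$ by that of the diagonal subalgebra $U_{\mathbb Z}(\Delta(\mathfrak b))\subset U_{\mathbb Z}(\mathfrak b^1\oplus\mathfrak b^2)$ modulo the ideal generated by the third-type root vectors. Over a field this reduces, as in Section~4, to comparing the linear span of a $B^1\times B^2$-orbit with that of a $\Delta(B)$-orbit on a matrix realization of $\mathfrak n^{-,a}_\eta\oplus \mathfrak d^3$. The extra subtlety here is that the symplectic involution $\eta$ forces the four blocks of a $4m\times 4m$ matrix to come in anti-diagonally symmetric pairs, so the identification of $\mathfrak d$ with a space of $2m\times 2m$ matrices and of $\mathfrak n^{-,a}_\eta$ with the symmetric-matrix subspace must be carefully tracked, together with the signs coming from $\eta$. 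Once that is in place the orbit computation
\[
\bigl\{\mathrm{Ad}((b_1,b_2))\cdot E_{\tilde\alpha} \mid (b_1,b_2)\in B^1\times B^2 \bigr\} = \bigcup_{\lambda\in k}\bigl\{\mathrm{Ad}((b,b))\cdot(E_{\tilde\alpha}+\lambda X_\lambda)\mid b\in B\bigr\}
\]
goes through just as before, with $X_\lambda$ living in $\mathfrak d^3$, yielding the desired equality $U(\mathfrak b^1\oplus\mathfrak b^2)\circ E_{\tilde\alpha}^{(m)}=U(\Delta(\mathfrak b))\circ E_{\tilde\alpha}^{(m)}$ modulo the ideal; a base-change argument as in Lemma~\ref{deltabandb} promotes the equality to the ring $\mathbb Z$.

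To conclude, I would invoke the identification (obtained via the Chevalley involution of Lemma~\ref{ChevalleytwistSp}) that sends the abelianized $f_\alpha^{(k)}$ to $E_{\phi(\alpha)}^{(k)}$ and carries the Borel action on $S^\bullet_{\mathbb Z}(\mathfrak n^{-,a}_\eta)$ used to generate $I_{\mathbb Z}(\lambda)$ (cf.\ \cite{FFL3}, and using that $R^{++}$ is precisely the set of positive $Sp_{2m}$-roots) to the $U_{\mathbb Z}(\Delta(\mathfrak b))$-action above. Combined with Lemma~\ref{5.4}(ii), this shows that every generator of $M_{\mathbb Z}(\lambda)$ lies in $I_{\mathbb Z}(\lambda)$, hence $M_{\mathbb Z}(\lambda)\subseteq I_{\mathbb Z}(\lambda)$. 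This produces the dashed surjection $f$ in the symplectic diagram, which together with the other three sides and the injectivity of $d$ forces all maps to be isomorphisms, completing the proof.
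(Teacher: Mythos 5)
Your overall skeleton coincides with the paper's: set up the analogous commutative diagram, use Mathieu's Lemme 26 to present the Demazure module, partition the positive roots of $Sp_{2(2m)}$ into three families, reduce modulo the ideal generated by third-type divided powers, and show $M_{\mathbb Z}(\lambda)\subseteq I_{\mathbb Z}(\lambda)$. Where you diverge from the paper, and where your sketch develops a genuine gap, is in the key step of replacing the ``big'' Borel action by the diagonal symplectic Borel action.

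The paper does \emph{not} rerun the orbit computation of Lemma~\ref{deltabandb} in the symplectic setting. Instead, it observes that the subalgebra $\mathfrak a$ generated by the second-type root vectors embeds anti-diagonally into $\mathfrak b_{\mathbb Z,\mathfrak{sl}_{2m}}\oplus\mathfrak b_{\mathbb Z,\mathfrak{sl}_{2m}}$ (and contains the image $\Delta^-(\mathfrak b_{\mathbb Z})$ of the symplectic Borel under $A\mapsto(A,-\overline A)$), and then deduces the crucial identity \eqref{EqnClaimSpn} directly from the already-proved type-$\mathtt A$ Lemma~\ref{deltabandb} by taking fixed points under the Dynkin-diagram outer automorphism of $\mathfrak{sl}_{2m}$ and $\mathfrak{sl}_{4m}$. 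No new geometric computation is needed.

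Your sketch instead proposes to redo the $B^1\times B^2$ versus $\Delta(B)$ orbit analysis from scratch inside $\mathfrak{sp}_{4m}$, and this is where things go wrong. In $\mathfrak{sp}_{4m}$ there simply is no direct summand $\mathfrak b^1\oplus\mathfrak b^2$: the bottom-right block of a symplectic matrix is the negative skew-transpose of the top-left block, so the second-type root spaces span a \emph{single} copy of $\mathfrak b_{\mathfrak{sl}_{2m}}$ (anti-diagonally embedded), and the comparison one actually needs is between that $\mathfrak{gl}_{2m}$-Borel and its symplectic sub-Borel $\Delta^-(\mathfrak b)$. Accordingly, the adjoint action on the top-right block is a congruence $A\mapsto bA\bar b$, not a two-sided product $b_1Ab_2^{-1}$, so the displayed formula $\{\mathrm{Ad}(b_1,b_2)\cdot E_{\tilde\alpha}\}=\bigcup_\lambda\{\mathrm{Ad}(b,b)\cdot(E_{\tilde\alpha}+\lambda X_\lambda)\}$ is not meaningful in this context (there is no pair $(b_1,b_2)$, and a one-parameter correction $\lambda X_\lambda$ is not justified by any dimension count). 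The assertion that the orbit computation ``goes through just as before'' is therefore unsupported, and it is exactly this point that the paper's fixed-point argument is designed to avoid. To repair your proof you would either have to carry out the congruence-orbit analysis genuinely (and over $\mathbb Z$), or adopt the paper's device of deducing the symplectic identity from the type-$\mathtt A$ one via the outer automorphism.
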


As in the type $\tt{A}$-case,  the proof of the above theorem will provide us with a
description of
$V_{\mathbb Z}^a(\lambda)$ as an $S^{\bullet}_{\mathbb Z}(\mathfrak n_{\eta}^{-,a})$-module in
terms of generators and relations.
The abelianized module $V_{\mathbb{Z}}^a(\lambda)$ is a cyclic module over the
algebra $S^{\bullet}_{\mathbb Z}(\mathfrak n_{\eta}^{-,a})$
with the image of a highest weight vector $v_\lambda\in V(\lambda)$ in
$V_{\mathbb Z}^a(\lambda)$ as a generator (cf. \cite[Proposition 2.3]{FFL3}).
Hence the module is isomorphic to $S^{\bullet}_{\mathbb Z}(\mathfrak
n_{\eta}^{-,a})/I_\mathbb{Z}(\lambda)$ where $I_\mathbb{Z}(\lambda)$
is the annihilator of $v_\lambda$ in $S_{\mathbb Z}(\mathfrak{n}_{\eta}^{-,a})$.
As a consequence of the proof of  Theorem~\ref{ThmIsoSp2m}, we obtain the description of the ideal $I_\mathbb{Z}(\lambda)$ in terms of generators given in \cite{FFL1} and \cite{FFL3}
from Mathieu's generator and relation presentation of Demazure modules.

Let $\mathfrak b$ be the Borel subalgebra of
$\mathfrak{sp}_{2m}=(\mathfrak{sp}_{2m})_{\mathbb Z}\otimes\mathbb C$ as in
Example \ref{example1} {\it b)}, so $\mathfrak{sp}_{2m}=\mathfrak
n^-\oplus\mathfrak h \oplus\mathfrak n^+$.
As free $\mathbb{Z}$-modules,
$U_{\mathbb{Z}}(\mathfrak n^-)\simeq U_{\mathbb{Z}}(\mathfrak
g)/U^+_{\mathbb{Z}}(\mathfrak h+\mathfrak n^+)$, so that
the adjoint action of $U_{\mathbb{Z}}(\mathfrak b)$ on $U_{\mathbb{Z}}(\mathfrak
g)$ induces a structure of
$U^+_{\mathbb{Z}}(\mathfrak b)$- and $B_{\mathbb Z}$-module on
$U_{\mathbb{Z}}(\mathfrak n^-)$ and hence on
$S_{\mathbb Z}(\mathfrak n_{\eta}^{-,a})$. This action is compatible with the
$B_{\mathbb Z}$-action on
$V_{\mathbb Z}^a(\lambda)$ (cf. \cite[Prop. 2.3.]{FFL3}). Recall that for a
positive root $\alpha$ we have denoted by $f_\alpha$
the corresponding fixed Chevalley basis element in
$(\mathfrak{sp}_{2m})_{-\alpha, \mathbb{Z}}$. Let us set
$$R^{++}=\{\epsilon_i-\epsilon_j\mid 1\leq i< j\leq m\}\cup \{2\epsilon_i\mid
1\leq i \leq m\}$$
As a consequence of the proof of Theorem~\ref{ThmIsoSp2m} we get the
following description of the ideal  $I_\mathbb{Z}(\lambda)$:
\begin{cor}
The abelianized module $V_{\mathbb Z}^a(\lambda)$ is as a cyclic $S^{\bullet}_{\mathbb
Z}(\mathfrak n_{\eta}^{-,a})$-module isomorphic to
$S^{\bullet}_{\mathbb Z}(\mathfrak n_{\eta}^{-,a})/I_\mathbb{Z}(\lambda)$, where
$$
I_\mathbb{Z}(\lambda)= S^{\bullet}_{\mathbb Z}(\mathfrak n_{\eta}^{-,a})( U_{\mathbb Z}(\mathfrak
n^+)\circ
\text{span}\{f_\alpha^{(\langle\lambda,\alpha^\vee\rangle+m)}\mid m\ge 1, \alpha\in
R^{++}\})\subseteq S^{\bullet}_{\mathbb Z}(\mathfrak n_{\eta}^{-,a}).
$$\end{cor}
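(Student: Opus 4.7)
The plan is to mirror, step by step, the strategy used in Section~\ref{strategy} for the $SL_n$-case, with Theorem~\ref{ThmIsoSp2m} playing the role of Theorem~\ref{ThmIsoSLn}. Once $V_{\mathbb Z}^a(\lambda)$ has been identified as an $S^\bullet_{\mathbb Z}(\mathfrak n_{\eta}^{-,a})$-module with the Demazure module $\tilde V_{\mathbb Z}(\Psi(\lambda))_{\bar\tau}$, computing the annihilator $I_{\mathbb Z}(\lambda)$ of the highest weight vector amounts to computing the annihilator $M_{\mathbb Z}(\lambda)$ of the extremal weight vector $v_{\bar\tau}$ inside $S^\bullet_{\mathbb Z}(\mathfrak n_\eta^{-,a})$.

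First I would invoke Mathieu's presentation \cite{M}, Lemme 26, to describe $\tilde V_{\mathbb Z}(\Psi(\lambda))_{\bar\tau}$ as the quotient of $U_{\mathbb Z}(\tilde{\mathfrak n}^+)$ by the left ideal $\tilde I_{\mathbb Z}(\bar\tau\Psi(\lambda))$ generated by the divided powers $E_{\tilde\alpha}^{(m)}$ for $m\ge 1$ when $\langle\tilde\alpha^\vee,\bar\tau\Psi(\lambda)\rangle\ge 0$ and by $E_{\tilde\alpha}^{(-\langle\tilde\alpha^\vee,\bar\tau\Psi(\lambda)\rangle+m)}$ for $m\ge 1$ otherwise. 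Next, as in section~\ref{generatordemzureideal}, I would partition the positive roots of $Sp_{2(2m)}$ into three types: \emph{first type} $\tilde\alpha=\phi(\alpha)$ for a positive $Sp_{2m}$-root $\alpha$; \emph{second type} living in either the upper-left or lower-right symplectic block; and \emph{third type} corresponding to the remaining root vectors that annihilate $v_{\bar\tau}$. By Lemma~\ref{5.4}{\it i)}, only first-type roots contribute negative pairings with $\bar\tau\Psi(\lambda)$. A PBW-basis argument, together with the commutator relation $[E_{\tilde\gamma},E_{\tilde\beta}]=c\,E_{\tilde\gamma'}$ for $\tilde\gamma$ of third type (with $\tilde\gamma'$ again of third type or zero), shows that modulo the submodule $\tilde I_{\mathbb Z}(\infty)$ spanned by second- and third-type divided powers, $M_{\mathbb Z}(\lambda)$ is generated, as a left $S^\bullet_{\mathbb Z}(\mathfrak n_\eta^{-,a})$-module, by the $U_{\mathbb Z}(\mathfrak b^1_{\mathbb Z}\oplus\mathfrak b^2_{\mathbb Z})$-orbits of $E_{\tilde\alpha}^{(-\langle\tilde\alpha^\vee,\bar\tau\Psi(\lambda)\rangle+m)}$, with $\tilde\alpha$ of first type.

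The second main step is the symplectic analogue of Lemma~\ref{deltabandb}: I would prove that inside $S^\bullet_k(\mathfrak n_\eta^{-,a})$ the equality
\[
U_k(\mathfrak b^1\oplus\mathfrak b^2)\circ E_{\tilde\alpha}^{(m)} \;=\; U_k(\Delta(\mathfrak b))\circ E_{\tilde\alpha}^{(m)}
\]
holds for any field $k$ and any first-type $\tilde\alpha$, where $\Delta(\mathfrak b)\subset\mathfrak b^1\oplus\mathfrak b^2$ is the diagonally embedded symplectic Borel. The proof would run exactly as in the $SL_n$ case, by computing the $B^1\times B^2$-orbit through $E_{\tilde\alpha}$ explicitly under the identification of $\mathfrak n_\eta^{-,a}\oplus\mathfrak d^3$ with (a twisted copy of) the space of $2m\times 2m$ matrices, and verifying that the extra orbit directions lie in the ideal generated by third-type root vectors. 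The integral version then follows by base change, since the obvious inclusion $U_{\mathbb Z}(\Delta(\mathfrak b))\circ E_{\tilde\alpha}^{(m)}\subseteq U_{\mathbb Z}(\mathfrak b^1_{\mathbb Z}\oplus\mathfrak b^2_{\mathbb Z})\circ E_{\tilde\alpha}^{(m)}$ becomes an equality after tensoring with any field. Combining this with Lemma~\ref{5.4}{\it ii)} (which gives $-\langle\tilde\alpha^\vee,\bar\tau\Psi(\lambda)\rangle=\langle\alpha^\vee,\lambda\rangle$ for $\tilde\alpha=\phi(\alpha)$) and with the identification of $E_{\phi(\alpha)}\in\mathfrak n_\eta^{-,a}$ with the image of $f_\alpha$ under the abelianization (together with the $\eta$ twist), translates the generators of $M_{\mathbb Z}(\lambda)$ into the elements $f_\alpha^{(\langle\lambda,\alpha^\vee\rangle+m)}$ acted on by $U_{\mathbb Z}(\mathfrak n^+)\circ$, exactly as in the statement. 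Since $R^{++}$ is precisely the set of positive $Sp_{2m}$-roots and we have accounted for both short and long roots, this matches the claimed description of $I_{\mathbb Z}(\lambda)$.

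The main obstacle, as in the linear case, is the proof of the symplectic analogue of Lemma~\ref{deltabandb}. The explicit matrix computation used in the $SL_n$ proof, where the $B^1\times B^2$-orbit through $E_{n,1}$ was parametrized as $b(E_{n,1}+\lambda E_{1,1})b^{-1}$ modulo the third-type ideal, must be adapted to the symplectic setting, where the two block Borels carry the symplectic form on the respective halves and the twist $\eta$ intervenes. For the long root generators $\tilde\alpha=\phi(2\epsilon_i)$ the corresponding matrix is symmetric rather than a single matrix unit, so the orbit analysis produces additional terms that one must show lie in the third-type ideal $I(\mathfrak d^3)$. Once this case-by-case verification is carried out, the remainder of the argument is the direct translation of Section~\ref{PROOF}.
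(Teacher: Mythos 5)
Your overall strategy matches the paper's: reduce to Theorem~\ref{ThmIsoSp2m}, use Mathieu's Lemme 26, split the positive roots of $Sp_{2(2m)}$ into three types, work modulo the third-type ideal, and then prove a symplectic analogue of Lemma~\ref{deltabandb} relating the $\mathfrak b^1\oplus\mathfrak b^2$-action to a (suitably embedded) single copy of the symplectic Borel. You correctly identify this last step as the main obstacle. However, the paper does \emph{not} redo the explicit orbit computation for the symplectic case as you propose. Instead, it observes that $\mathfrak{sp}_{2m}\subset\mathfrak{sl}_{2m}$ and $\mathfrak{sp}_{4m}\subset\mathfrak{sl}_{4m}$ arise as fixed-point sets of the outer automorphism given by the Dynkin diagram symmetry, embeds the symplectic Borel via $A\mapsto(A,-\overline{A})$ (the negative skew-transpose, not the plain diagonal), and deduces Equation~\eqref{EqnClaimSpn} directly from the already established Lemma~\ref{deltabandb} by taking invariants. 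Your route of adapting the explicit $b(E_{n,1}+\lambda E_{1,1})b^{-1}$ matrix computation to the symplectic setting would plausibly work, but you leave the case-by-case verification — in particular the more delicate long-root case you flag — as a promissory note rather than a proof, whereas the paper's fixed-point argument sidesteps that labor entirely.

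One genuine error in your writeup: you assert in closing that ``$R^{++}$ is precisely the set of positive $Sp_{2m}$-roots.'' It is not. The positive roots also include $\epsilon_i+\epsilon_j$ for $1\le i<j\le m$, which are deliberately omitted from $R^{++}$. The generators coming from those roots are redundant — $f_{\epsilon_i+\epsilon_j}^{(\,\cdot\,)}$ is already contained in $U_{\mathbb Z}(\mathfrak n^+)\circ\,\mathrm{span}\{f_{2\epsilon_j}^{(\,\cdot\,)}\}$ — which is precisely why the corollary can be stated with the smaller set. As written, your argument produces generators indexed by \emph{all} first-type roots and then misidentifies that set with $R^{++}$, so a step is missing: you still need to observe that the $\epsilon_i+\epsilon_j$ ($i<j$) contributions are absorbed under the $U_{\mathbb Z}(\mathfrak n^+)\circ$ action. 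Without that observation, what you have proved does not literally match the statement of the corollary.
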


\subsection{\,}The proof of the theorem will be only sketched, since the strategy is the same as for the type $A$- case. We reproduce here the diagram  of $S^\bullet (\mathfrak{n}_{\eta}^{-,a})$-modules summarizing  the main idea:
for a dominant weight $\lambda=a_1\omega_1+\ldots+a_{m}\omega_{m}$,
we have the following natural maps:
$$
\xymatrix{
S_{\mathbb Z}^\bullet(\mathfrak{n}_{\eta}^{-,a})/I_\mathbb{Z}(\lambda)\ar_(.6)\simeq^(.6)h[r]&  V_\ZZ^a(\lambda) \ar@{->>}[d]^a \ar[r]^<<<<<<<b &
*!<-5pt,0pt>{\;V_{\mathbb Z}^a(a_{1}\omega_{1})\otimes\cdots\otimes V_{\mathbb Z}^a(a_{m}\omega_{m})} \ar[d]^c_{\wr\mid}\\
S_{\mathbb Z}^\bullet(\mathfrak{n}_{\eta}^{-,a} )/M_\mathbb{Z}(\lambda)\ar@{..>>}^f[u]\ar^(.55)g_(.55)\simeq[r]&
\tilde{V}_{\mathbb Z}(\Psi(\lambda))_\tau  \ar@{^{(}->}[r]^<<<<<d&
\tilde{V}_{\mathbb Z}(a_{1}\Psi(\omega_{1}))_\tau\otimes\cdots\otimes  \tilde{V}_{\mathbb Z}(a_{m}\Psi(\omega_{m}))_\tau.
}
$$
where  in the top row the action on the modules is twisted by the Chevalley involution, so the cyclic generators
are lowest weight vectors,  the maps  $c$, $d$, $a$ and $g$ arise as in the proof of  Theorem~\ref{ThmIsoSLn}, so that again the main difficulty of the proof consists in producing  the map $f$.

 \subsection{\,} The first step consists in determining $M_\mathbb{Z}(\lambda)$.  By \cite[Lemme 26]{M}, the Demazure
module  $\tilde V_{\mathbb Z}(\Psi(\lambda))_\tau$ is isomorphic
to the algebra $U_{\mathbb{Z}}(\tilde{\mathfrak n})$ modulo the left ideal
$\tilde I_{\mathbb{Z}}(\tau\Psi(\lambda))$ generated by the elements
$E_{k,l}^{(m)}$  for all $m\ge 1$ if $\langle
{\tilde\alpha_{k,l}}^\vee,\tau\Psi(\lambda)\rangle\ge 0$,
and $E_{k,l}^{(-\langle {\tilde\alpha_{k,l}}^\vee,\tau\Psi(\lambda)\rangle +
m)}$ for all $m\ge 1$
otherwise.

\subsection{\, } The annihilator  $M_\mathbb{Z}(\lambda)$ is the intersection of $U_{\mathbb Z}(\mathfrak{n}_\eta^{-,a})\subset U_{\mathbb Z}(\tilde{\mathfrak n}^+)$
with the ideal $\tilde I_{\mathbb{Z}}(\bar\tau\Psi(\lambda))$. To determine such an intersection, we fix a PBW basis and divide the positive roots in three families, exactly as in the proof of Theorem~\ref{ThmIsoSLn}.

\subsection{\, } By Lemma~\ref{5.4}(i),  $\langle{\tilde\alpha_{k,l}}^\vee,\tau\Psi(\lambda)\rangle\ge 0$ if $\tilde\alpha_{k,l}$ is of third type. 
As in type $\tt A$, we may hence proceed with the calculation modulo the left ideal generated by the divided powers of the corresponding $E_{k,l}$.
Modulo such an ideal, by Lemma~\ref{3.2} and Lemma~\ref{5.4},  $\tilde I_{\mathbb{Z}}(\bar\tau\Psi(\lambda))$ is generated by the $E^{(m)}_{k,l}$, $m\ge 1$, for $\tilde\alpha_{k,l}$
of second type, and the $E_{k,l}^{(-\langle{\tilde\alpha_{k,l}}^\vee,\Psi(\lambda)\rangle + m)}$, $m\ge 1$, for
$\tilde\alpha_{k,l}$ of first type.

\subsection{\, }

Thus, we consider the subalgebra $\mathfrak{a}$ generated by the $E_{k,l}^{(m)}$, for $\tilde\alpha_{k,l}$ of
second type. Let $\mathfrak{b}_{\mathbb{Z}, \mathfrak{sl}_{2m}}$ be the Borel subalgebra of $\mathfrak{sl}_{2m}$
consisting of traceless upper triangular matrices and let $\mathfrak{b}_{\mathbb{Z}}$ be the corresponding symplectic Borel
subalgebra (of $(Sp^{\text{std}}_{2m})_\mathbb{Z}$).
Let us embed $\mathfrak{b}_{\mathbb{Z}}$ in
$\mathfrak{b}_{\mathbb{Z}, \mathfrak{sl}_{2m}}\oplus \mathfrak{b}_{\mathbb{Z}, \mathfrak{sl}_{2m}}$ via
$A\mapsto (A,-\overline{A})$ , where $\overline{A}$ denotes the matrix which is skew-transposed  to A, and let $\Delta^-(\mathfrak{b}_{\mathbb{Z}})$ be its image. Also $\mathfrak{a}$ is embedded in
$\mathfrak{b}_{\mathbb{Z}, \mathfrak{sl}_{2m}}\oplus \mathfrak{b}_{\mathbb{Z}, \mathfrak{sl}_{2m}}$, once we 
 identify the latter with the Lie algebra generated by the divided powers of the $SL_{4m}$-root vectors of second type. The image
of such an embedding contains $\Delta^-(\mathfrak{b}_{\mathbb{Z}})$. By taking fixed points with respect to the outer automorphism of $\mathfrak{sl}_{2m}$ and  $\mathfrak{sl}_{4m}$  induced by the symmetry of the Dynkin diagram, it follows from Lemma~\ref{deltabandb},
\begin{equation}\label{EqnClaimSpn}
 U_{\mathbb Z}(\Delta^-(\mathfrak{b}_{\mathbb{Z}})) \langle \{E_{i,j}^{(m)}\mid \alpha_{i,j}
\text{ of first type}, m\ge 1\}\rangle
 = U_{\mathbb{Z}}(\mathfrak a) \langle\{E_{ij}^{(m)}\mid
\alpha_{i,j} \text{ of first type}, m\ge 1\}\rangle.
\end{equation}
Therefore,
$$
\tilde I_{\mathbb{Z}}(\tau\Psi(\lambda))\cap S^\bullet_{\mathbb Z}(\mathfrak{n}_{\eta}^{-,a})\simeq
S^\bullet_{\mathbb Z}(\mathfrak n_{\eta}^{-,a})\circ U_{\mathbb{Z}}(\Delta^-(\mathfrak{b}_{\mathbb{Z}}))
\text{span}\{f_{ij}^{\langle\alpha^\vee,\lambda\rangle +\ell}\mid
\alpha\in R^{++}, \ell\ge 1\}=:M_{\mathbb{Z}}(\lambda).
$$

\subsection{ }\label{PROOFSp}
{\sl Proof of Theorem~\ref{ThmIsoSp2m}}. Since the roots of first type are precisely the ones coming from the elements
$f_{ij}$ with $\alpha_{i,j}\in R^{++}$ and since
$\{f_{ij}^{(\langle\alpha_{i,j}^\vee, \lambda\rangle+m)}\mid \alpha_{i,j}\in R^{++}, \ m\geq 1\}\subseteq I_{\mathbb{Z}}(\lambda)$, we get a surjective morphism
\begin{equation}\label{idealcontainingideal}
\tilde V_{\mathbb{Z}}(\Psi\lambda)_{\overline{\tau}}\stackrel{g}{\simeq}  S^\bullet_{\mathbb Z}(\mathfrak
n_{\eta}^{-,a})/ M_{\mathbb{Z}}(\lambda)\stackrel{f}{\rightarrow}
S^\bullet_{\mathbb Z}(\mathfrak n_\eta^{-,a})/I_{\mathbb{Z}}(\lambda)\simeq
V_{\mathbb{Z}}(\lambda)^a.
\end{equation}
This concludes the proof of the theorem. \qed

%Arguing as in the $SL_n$-case, we apply Lemma~\ref{lemma3.7} to produce a surjective homomorphism in the other direction:
 %if $\lambda=\sum_{i=1}^{n} \ell_i\omega_i$, then $V_{\mathbb Z}(\lambda)^a$ maps
%surjectively onto the Cartan component of
%\begin{align*}
%& V_{\mathbb Z}(\ell_1\omega_1)^a\otimes V_{\mathbb
%Z}(\ell_2\omega_2)^a\otimes\cdots\otimes  V_{\mathbb
%Z}(\ell_{n-1}\omega_{n})^a\\
%&\qquad \simeq \tilde V_{\mathbb Z}(\Psi(\ell_1\omega_1))_{\overline{\tau}}\otimes \tilde V_{\mathbb
%Z}(\Psi(\ell_2\omega_2))_{\overline{\tau}}\otimes\cdots\otimes \tilde V_{\mathbb
%Z}(\Psi(\ell_{n}\omega_{n}))_{\overline{\tau}}.
%\end{align*}
%\end{proof}

\subsection{\,} Let $\rho$ be the sum of the fundamental weights for $Sp_{2m}$ and let
$\tilde\rho=\Psi(\rho)$ be the corresponding dominant weight for $Sp_{2(2m)}$.
Let $Q_{\mathbb Z}\subset (Sp_{2(2m)})_{\mathbb{Z}}$ be the corresponding
parabolic subgroup. Recall that  $N^{-,a}_{\mathbb{Z},\eta}$ is a commutative
subgroup of the Borel subgroup $\tilde B_{\mathbb Z}$. For any $Sp_{2(2m)}$-root
$\tilde \alpha$ let
$U_{\mathbb Z, \tilde \alpha}$ be the associated root subgroup.

\begin{lem}
The orbit $\tilde B_{\mathbb{Z}}.\bar\tau\subset (Sp_{2(2m)})_{\mathbb
Z}/Q_{\mathbb Z}$ is nothing but $N^{-,a}_{\mathbb{Z},\eta}.\bar\tau$, and the map
$N^{-,a}_{\mathbb{Z},\eta}\rightarrow N^{-,a}_{\mathbb{Z},\eta}.\bar\tau$,
$u\mapsto u\bar\tau$, is a bijection.
\end{lem}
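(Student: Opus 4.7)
The strategy is to mimic the argument used in Lemma~\ref{N^{-,a}-orbit} for the $SL_n$-case, replacing the role of Lemma~\ref{2.2} by its symplectic counterpart (Lemma~\ref{3.2}). First I would invoke the standard parametrisation of the Bruhat cell through $\bar\tau$: by the theory of BN-pairs over $\mathbb{Z}$, the orbit $\tilde B_{\mathbb Z}.\bar\tau\subset (Sp_{2(2m)})_{\mathbb Z}/Q_{\mathbb Z}$ equals $\bigl(\prod_{\tilde\alpha\in\Gamma} U_{\mathbb Z,\tilde\alpha}\bigr).\bar\tau$, where
\[
\Gamma=\{\tilde\alpha\in\tilde\Phi^+\mid \bar\tau^{-1}(\tilde\alpha)<0\text{ and }\bar\tau^{-1}(\tilde\alpha)\notin\Phi(Q)\},
\]
and moreover the product map $\prod_{\tilde\alpha\in\Gamma}U_{\mathbb Z,\tilde\alpha}\to \prod_{\tilde\alpha\in\Gamma}U_{\mathbb Z,\tilde\alpha}.\bar\tau$ is a scheme-theoretic bijection (a product of affine lines parametrising an affine cell).

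Next I would translate the condition defining $\Gamma$ into a numerical one involving $\tilde\rho=\Psi(\rho)$: since $Q$ is the stabiliser of $\tilde\rho$, one has $\tilde\alpha\in\Gamma$ if and only if $\langle\bar\tau^{-1}(\tilde\alpha)^\vee,\tilde\rho\rangle<0$, equivalently $\langle\tilde\alpha^\vee,\bar\tau(\tilde\rho)\rangle<0$. Applying Lemma~\ref{3.2} to $\lambda=\rho$ (so every coefficient $b_i$ is strictly positive) one computes $\bar\tau(\Psi(\rho))=\sum_{i=1}^{m}b_i\tilde\epsilon_i-\sum_{i=1}^{m}b_{m-i+1}\tilde\epsilon_{m+i}+\cdots$ — more precisely, with the explicit diagonal shape given by Lemma~\ref{3.2}, the inner product table recorded in the proof of Lemma~\ref{5.4} shows that $\langle\tilde\alpha^\vee,\bar\tau(\tilde\rho)\rangle<0$ forces $\tilde\alpha$ to have its root space inside $\mathfrak n^{-,a}_\eta$. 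In other words, $\Gamma$ is contained in the set of roots whose root subgroups sit in $N^{-,a}_{\mathbb Z,\eta}$.

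For the converse inclusion I would observe, as in Examples~\ref{example2} and \ref{example3}, that $N^{-,a}_{\mathbb Z,\eta}$ is by construction the product of all root subgroups $U_{\mathbb Z,\phi(\alpha)}$ for $\alpha\in\Phi^+$, and that for every such $\phi(\alpha)$ Lemma~\ref{5.4}(ii) gives $\langle\phi(\alpha)^\vee,\bar\tau(\tilde\rho)\rangle=-\langle\alpha^\vee,\rho\rangle<0$, hence $\phi(\alpha)\in\Gamma$. Combining the two inclusions yields $\Gamma=\{\phi(\alpha)\mid\alpha\in\Phi^+\}$, so $\prod_{\tilde\alpha\in\Gamma}U_{\mathbb Z,\tilde\alpha}=N^{-,a}_{\mathbb Z,\eta}$ and the parametrisation above specialises to the claimed bijection $N^{-,a}_{\mathbb Z,\eta}\xrightarrow{\sim}N^{-,a}_{\mathbb Z,\eta}.\bar\tau=\tilde B_{\mathbb Z}.\bar\tau$.

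The main obstacle — and really the only one, since the abstract orbit parametrisation is standard — is the bookkeeping verification that the set $\Gamma$ extracted from the Bruhat-cell description coincides exactly with the set of roots indexing $N^{-,a}_{\mathbb Z,\eta}$. This reduces to a case analysis on the pair of indices $(i,j)$ of a positive root $\tilde\epsilon_i\pm\tilde\epsilon_j$ of $Sp_{2(2m)}$, using the explicit formulas for $\bar\tau(\Psi(\rho))$ in Lemma~\ref{3.2} and the type-by-type sign table already established in the proof of Lemma~\ref{5.4}; this is the symplectic counterpart of the short index computation performed at the end of the proof of Lemma~\ref{N^{-,a}-orbit}.
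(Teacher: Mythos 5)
Your proposal is correct and follows the same route as the paper's proof: parametrise the Bruhat cell through $\bar\tau$ by root subgroups $U_{\mathbb Z,\tilde\alpha}$ indexed by $\Gamma$, convert membership in $\Gamma$ to the pairing condition $\langle\tilde\alpha^\vee,\bar\tau(\tilde\rho)\rangle<0$, and match this with the roots $\phi(\alpha)$ building $N^{-,a}_{\mathbb Z,\eta}$. The only cosmetic difference is that you delegate the sign bookkeeping cleanly to Lemma~\ref{5.4}(i) and (ii), whereas the paper briefly reruns the case analysis of that lemma's proof inline.
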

\begin{proof}
 We have $\tilde B_{\mathbb Z}.\bar\tau=\prod_{\tilde\alpha>0} U_{\mathbb
Z,\alpha}.\bar\tau$, and the map
$\prod_{\tilde\alpha\in \Gamma} U_{\mathbb Z, \tilde\alpha}\rightarrow
\prod_{\tilde\alpha\in \Gamma} U_{\mathbb Z,\tilde \alpha}.\bar\tau$ is a
bijection,
where $\Gamma$ is the set of all positive roots of $Sp_{2(2m)}$ such that
$\bar\tau^{-1}(\tilde\alpha)<0$ and $\bar\tau^{-1}(\tilde\alpha)$ is not
an element of the root system of $Q_{\mathbb Z}$. Now this condition is
fulfilled if and only if
$\langle\bar\tau^{-1}(\tilde\alpha^\vee),\tilde\rho\rangle<0$, or, equivalently,
$\langle\tilde\alpha^\vee,\bar\tau(\tilde\rho)\rangle<0$. By Lemma~\ref{3.2}
this is not possible if $\tilde\alpha$ is of the form
$\tilde\alpha=\tilde \epsilon_i-\tilde\epsilon_j$, $1\le i<j \le 2m$. For the
long roots this is only possible if $\tilde\alpha=2\tilde\epsilon_j$,
$j=m+1,\ldots,2m$,
and for the roots $\alpha=\tilde \epsilon_i+\tilde\epsilon_j$, $1\le i<j\le 2m$,
this is only possible if either $i,j\ge m+1$ or $1\le i\le m$ and $j=2m+1-k$ is
such that $1\le k<i$.

But this implies that the root subgroup $U_{\mathbb{Z}, \tilde\alpha}$ is a
subgroup of $N^{-,a}_{\mathbb{Z},\eta}$, and all root subgroups of $(Sp_{2(2m)})_{\mathbb{Z}}$
contained in $N^{-,a}_{\mathbb{Z},\eta}$ satisfy this condition. It follows that $N^{-,a}_{\mathbb{Z},\eta}.\bar\tau$ is
the product of all root subgroups corresponding to positive
roots of $Sp_{2(2m)}$ such that $\bar\tau^{-1}(\tilde\alpha)<0$ and
$\bar\tau^{-1}(\alpha)$ is not
an element of the root system of $Q_{\mathbb{Z}}$ and hence
$N^{-,a}_{\mathbb{Z},\eta}.\bar\tau=\tilde B_{\mathbb{Z}}.\bar\tau\subset
(Sp_{2(2m)})_{\mathbb{Z}}/Q_{\mathbb{Z}}$.
\end{proof}

\begin{cor}
The degenerate flag variety ${\mathcal{F}\ell}(\lambda)_k^a$ depends only on
$\text{supp\,}\lambda$, it is a projectively normal variety, Frobenius split,
with rational singularities.
\end{cor}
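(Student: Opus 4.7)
The plan is to mirror the argument already carried out in the $SL_n$ case, using the symplectic ingredients that have just been assembled. First, I would upgrade the preceding lemma (identifying the $\tilde B_{\mathbb Z}$-orbit of $\bar\tau$ in $(Sp_{2(2m)})_{\mathbb Z}/Q_{\mathbb Z}$ with the $N^{-,a}_{\mathbb Z,\eta}$-orbit) to a scheme isomorphism between the Schubert scheme $X_{\mathbb Z}(\bar\tau)\subset \mathbb{P}(\tilde V_{\mathbb Z}(\Psi(\lambda))_{\bar\tau})$ and the degenerate flag scheme ${\mathcal{F}\ell}(\lambda)^a_{\mathbb Z}\subset \mathbb{P}(V^a_{\mathbb Z}(\lambda))$. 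The crucial inputs are (a) Theorem~\ref{ThmIsoSp2m}, which provides an $\mathfrak n^{-,a}_\eta$-equivariant isomorphism between $\tilde V_{\mathbb Z}(\Psi(\lambda))_{\bar\tau}$ and $V^a_{\mathbb Z}(\lambda)$ matching the extremal weight vector $v_{\bar\tau}$ with the highest weight vector $v_\lambda$, and (b) the orbit identification $\tilde B_{\mathbb Z}.\bar\tau = N^{-,a}_{\mathbb Z,\eta}.\bar\tau$ from the previous lemma. Together these map the $\tilde B_{\mathbb Z}$-orbit of the extremal weight vector onto the $N^{-,a}_{\mathbb Z,\eta}$-orbit of the highest weight vector; taking closures gives the scheme isomorphism.

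Second, I would pass to an algebraically closed field $k$ by base change. By the results of Mathieu, Ramanathan, and Ramanan--Ramanathan (\cite{M,MR,R,RR}) already cited in the $SL_n$ discussion, the Schubert variety $X_k(\bar\tau) = X_{\mathbb Z}(\bar\tau)\otimes_{\mathbb Z} k$ is projectively normal, Frobenius split, and has rational singularities, and moreover $\tilde V_k(\Psi(\lambda))_{\bar\tau}=\tilde V_{\mathbb Z}(\Psi(\lambda))_{\bar\tau}\otimes_{\mathbb Z} k$. Combined with the symplectic analogue of Corollary~\ref{abelfree} (freeness of $V^a_{\mathbb Z}(\lambda)$ over $\mathbb Z$, immediate from Theorem~\ref{ThmIsoSp2m} since Demazure modules are direct summands of Weyl modules), the scheme isomorphism above base-changes to an isomorphism ${\mathcal{F}\ell}(\lambda)^a_k \simeq X_k(\bar\tau)$, and all three geometric properties transfer.

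Finally, the dependence on $\mathrm{supp}\,\lambda$ follows from the observation that $X_k(\bar\tau)$ actually lives inside the partial flag variety $\tilde G_k/Q_{\lambda,k}$, where $Q_{\lambda}$ is the parabolic $\mathbb Z$-subgroup stabilizing the line through $v_{\Psi(\lambda)}$; by construction of $\Psi$ this parabolic depends only on which fundamental weights appear in $\lambda$, i.e.\ only on $\mathrm{supp}\,\lambda$. Since $\bar\tau$ itself does not depend on $\lambda$, the Schubert variety $X_k(\bar\tau)\subset \tilde G_k/Q_{\lambda,k}$, and hence ${\mathcal{F}\ell}(\lambda)^a_k$, depends only on $\mathrm{supp}\,\lambda$.

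There is no real obstacle here since every step is either a direct appeal to Theorem~\ref{ThmIsoSp2m}, the preceding orbit lemma, or standard results about Schubert varieties; the only point requiring minor care is checking that the orbit isomorphism indeed globalizes to closures in projective space, which is formal once the linear $\mathfrak n^{-,a}_\eta$-equivariance provided by Theorem~\ref{ThmIsoSp2m} is in hand.
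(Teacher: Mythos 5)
Your proposal is correct and follows essentially the same route the paper takes: use the orbit lemma together with the $\mathfrak n^{-,a}_\eta$-module isomorphism of Theorem~\ref{ThmIsoSp2m} to identify the Schubert scheme with the degenerate flag scheme over $\mathbb Z$, then base change to $k$ (using that Demazure modules and Schubert schemes commute with base change and that $V^a_{\mathbb Z}(\lambda)$ is $\mathbb Z$-free, as in Corollary~\ref{abelfree}), and finally invoke the standard Frobenius splitting, projective normality and rational singularities results for Schubert varieties. Your treatment of the $\mathrm{supp}\,\lambda$ dependence via the observation that the ambient parabolic $Q_\lambda=P_{\Psi(\lambda)}$ depends only on which $\tilde\omega_{2i}$ occur, while $\bar\tau$ is fixed, is precisely the argument the paper leaves implicit.
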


\section*{Acknowledgements}
The work of M.L and P.L was partially supported by the DFG-Schwerpunkt Grant SP1388. The work of G.C.I. was financed by the national FIRB grant RBFR12RA9W ``Perspectives in Lie Theory''. We thank Corrado De Concini for many helpful conversations about this project.

\end{document}